      \newcommand {\al}   {\alpha}          \newcommand {\bt}  {\beta}
      \newcommand {\gam } {\gamma}          
      \newcommand {\del}  {\delta}          \newcommand {\Del} {\Delta}
              \newcommand {\ve}   {\varepsilon}
                 \newcommand {\vphi} {\varphi}
      \newcommand {\lam}  {\lambda}         
      \newcommand {\om}   {\omega}          \newcommand {\Om}  {\Omega}
      \newcommand {\pl}   {\partial}        
           \newcommand {\UUU}  {{\cal U}}
            \newcommand {\SSS}  {{\mathcal S}}
       \newcommand {\CCC}  {\mathfrak{C}}     \newcommand {\LLL}  {{\cal L}}
                 \newcommand {\RRR}  {{\mathbb R}}
      \newcommand {\OOO}  {{\cal O}}        \newcommand {\MMM}  {{\cal M}}
      \newcommand {\EEE}  {\mathcal{E}}        
      \newcommand {\FFF}   {{\cal F}}
          \newcommand {\AAA}  {{\cal A}}   
          \newcommand {\conv}  {\text{conv}}          \newcommand {\uu}  {u_1}
           \newcommand {\CMu}  {C(u,M)}
     \newcommand {\beq}  {\begin{equation}}
      \newcommand {\eeq}  {\end{equation}}  
     \newcommand {\beqo}  {\begin{equation*}}
      \newcommand {\eeqo}  {\end{equation*}}
      \newtheorem{theorem}{Theorem}
      \newtheorem{lemma}{Lemma}
      \newtheorem{propo}{Proposition}
      \newtheorem{zam}{Remark}
      \newtheorem{corollary}{Corollary}
\author{Alexander Plakhov\thanks{Center for R\&{}D in Mathematics and Applications, Department of Mathematics, University of Aveiro, Portugal and Institute for Information Transmission Problems, Moscow, Russia.}}
\title{A solution to Newton's least resistance problem\\ is uniquely defined by its singular set}
\begin{document}

\maketitle

\begin{abstract}
Let $u$ minimize the functional $F(u) = \int_\Omega f(\nabla u(x))\, dx$ in the class of convex functions $u : \Omega \to {\mathbb R}$ satisfying $0 \le u \le M$, where $\Omega \subset {\mathbb R}^2$ is a compact convex domain with nonempty interior and $M > 0$, and $f : {\mathbb R}^2 \to {\mathbb R}$ is a $C^2$ function, with $\{ \xi : \, \text{the smallest eigenvalue of} \, f''(\xi) \, \text{is zero} \}$ being a closed nowhere dense set in ${\mathbb R}^2$. Let epi$(u)$ denote the epigraph of $u$. Then any extremal point $(x, u(x))$ of epi$(u)$ is contained in the closure of the set of singular points of epi$(u)$. As a consequence, an optimal function $u$ is uniquely defined by the set of singular points of epi$(u)$. This result is applicable to the classical Newton's problem, where $F(u) = \int_\Omega (1 + |\nabla u(x)|^2)^{-1}\, dx$.
\end{abstract}

\begin{quote}
{\small {\bf Mathematics subject classifications:} 52A15, 52A40, 49Q10}
\end{quote}

\begin{quote} {\small {\bf Key words and phrases:}
Newton's problem of least resistance, convex geometry, surface area measure, method of nose stretching.}
\end{quote}

\section{Introduction}

{\bf 1.1.}\,
Isaac Newton in his {\it Principia} \cite{N} considered the following problem of optimization. A solid body moves with constant velocity in a sparse medium. Collisions of the medium particles with the body are perfectly elastic. The absolute temperature of the medium is zero, so as the particles are initially at rest. The medium is extremely rare, so that mutual interactions of the particles are neglected. As a result of body-particle collisions, the drag force acting on the body is created. This force is usually called {\it resistance}.

The problem is: given a certain class of bodies, find the body in this class with the smallest resistance. Newton considered the class of convex bodies that are rotationally symmetric with respect to a straight line parallel the direction of motion and have fixed length along this direction and fixed maximal width.

In modern terms the problem can be formulated as follows. Let a reference system $x_1,\, x_2,\, z$ be connected with the body and the $z$-axis coincide with the symmetry axis of the body. We assume that the particles move upward along the $z$-axis. Let the lower part of the body's surface be the graph of a convex radially symmetric function $z = u(x_1, x_2) = \vphi(\sqrt{x_1^2 + x_2^2})$, $x_1^2 + x_2^2 \le L^2$; then the resistance equals
$$
%F(\vphi) =
2\pi \rho v^2 \int_0^L \frac{1}{1 + \vphi'(r)^2}\, r\, dr,
$$
where the constants $\rho$ and $v$ mean, respectively, the density of the medium and the scalar velocity of the body. The problem is to minimize %$F(\vphi)$
 the resistance in the class of convex monotone increasing functions $\vphi : [0,\, L] \to \RRR$ satisfying $0 \le \vphi \le M$. Here $M$ and $L$ are the parameters of the problem: $M$ is length of the body and $2L$ is its maximal width.

Newton gave a geometric description of the solution to the problem. The optimal function bounds a convex body that looks like a truncated cone with slightly inflated lateral boundary. An optimal body, corresponding to the case when the length is equal to the maximal width, is shown in Fig.~\ref{figNewton}.
        \begin{figure}[h]
\centering
\hspace*{6mm}
\rotatedown{
\includegraphics[scale=0.45]{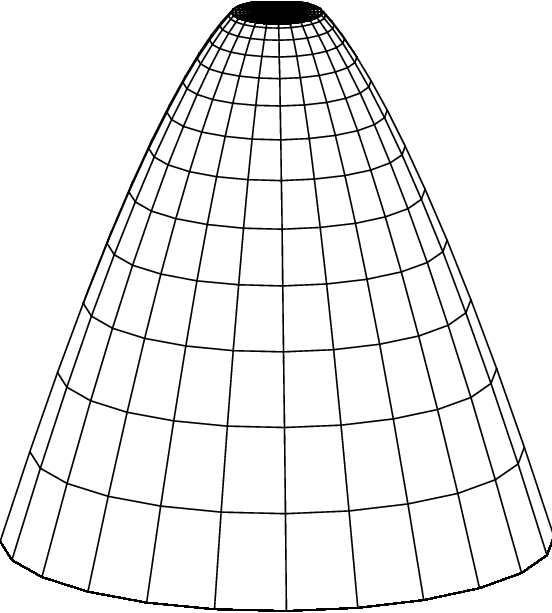}
}
\caption{A solution to the rotationally symmetric Newton problem.}
\label{figNewton}
\end{figure}

Starting from the important paper by Buttazzo and Kawohl \cite{BK}, the problem of minimal resistance has been studied in various classes of (generally) nonsymmetric and/or (generally) nonconvex bodies. The problem for nonconvex bodies is by now well understood \cite{CL1,CL2,Canadian,AP,SIREV,Nonl2016,MMOP}. Generalizations of the problem to the case of rotating bodies have been studied \cite{SIMArough,ARMA,PTG,OMT}, and connections with the phenomena of invisibility, retro-reflection, and Magnus effect in geometric optics and mechanics have been established \cite{invisibility,invisN,camouflage,tube,retro,PTG}. The methods of billiards, Kakeya needle problem, optimal mass transport have been used in these studies. A detailed exposition of results obtained in this area can be found in \cite{bookP}.

The most direct generalization of the original Newton's problem concerns finding the optimal convex (not necessarily symmetric) shape. More precisely, the problem is to minimize
\beq\label{func N}
\int\!\!\!\int_\Om \frac{1}{1 + |\nabla u(x_1,x_2)|^2}\, dx_1 dx_2
\eeq
in the class of convex functions
$$
\CCC_M = \{ u : \Om \to \RRR :\, 0 \le u \le M,\ u  \text{ is convex} \}.
$$
Here $\Om \subset \RRR^2$ is a compact convex set with nonempty interior, and $M > 0$ is the parameter of the problem.

Surprisingly enough, this problem is still poorly understood. It is known that there exists at least one solution \cite{M,BFK}. Let $u$ be a solution; then $u\rfloor_{\pl\Om} = M$ \cite{boundary} and at any regular point $x = (x_1, x_2)$ of $u$ we have either $|\nabla u(x)| \ge 1$, or $|\nabla u(x)| = 0$ \cite{BFK}. Moreover, if the zero level set $L = \{ x : u(x) = 0 \}$ has nonempty interior then we have $\lim_{\stackrel{x \to \bar x}{x \not\in L}} |\nabla u(x)| = 1$ for almost all $\bar x \in \pl L$ \cite{ridge}. If $u$ is $C^2$ in an open set $\UUU \subset \Om$, then the second derivative $u''(x)$ has a zero eigenvalue for all $x \in \UUU$, and therefore, graph$\big( u\rfloor_\UUU \big) = \{ (x, u(x)) : x \in \UUU \}$ is a developable surface \cite{BrFK}.
The problem in special subclasses of convex functions is solved in \cite{LP1},\cite{CaL},\cite{LZ}.
A more detailed review of results concerning the convex problem can be found in \cite{sliding}.

The last property is of special interest. Indeed, the numerical results stated in \cite{LO} and \cite{W} seem to indicate that the singular points of a solution $u$ form several curves on $\Om$ and the graph of $u$ is foliated by line segments and planar pieces of surface outside these curves. That is, cutting the graph of $u$ along the singular curves, one can flatten the resulting pieces of surface on a plane without distortion.
%%%%%
Unfortunately, there are no mathematical results indicating that $u$ is $C^2$ outside the set of singular points. It is even not known whether the domain of $C^2$ smoothness of $u$ is nonempty.

{\bf 1.2.}\,
The aim of the present paper is to relax the $C^2$ condition to the $C^1$ one. This paper is a continuation of \cite{sliding}. The main difference of the results is that in \cite{sliding} the {\it existence} of a solution possessing a certain property is guaranteed, while in this paper it is assured that the property holds for {\it any} solution. It took about 25 years to relax from $C^2$ to $C^1$ but, in our opinion, the method used here and in \cite{sliding} may be more important than the result obtained. The method is called {\it nose stretching} and amounts to a small variation of a convex set
  %body\footnote{A {\it convex body} is a compact convex set with nonempty interior.}
in the following way. Having a convex set $C$ in $\RRR^3$, we take a point $O$ or two points $A$ and $B$ outside $C$ near its boundary, define $\tilde C = \text{{\conv}}(C \cup \{ O \})$ (in \cite{sliding}) or $\tilde C = \text{{\conv}}(C \cup \{ A, B \})$ (in this paper), and then define a 1-parameter family of convex sets containing $C$ and $\tilde C$.

We believe that it is fruitful to study the minimization problem in a more general form:

\begin{quote}
Minimize the functional
\beq\label{func general}
F(u) = \int_\Om f(\nabla u(x))\, dx
\eeq
in the class $\CCC_M$, where $f : \RRR^2 \to \RRR$ is a continuous function.\footnote{Since the set of regular points of $u$ is a full-measure subset of $\Om$ and the vector function $x \mapsto \nabla u(x)$ is measurable, $F(u)$ is well defined.}
\end{quote}

Taking $f(\xi) = 1/(1 + |\xi|^2)$, one obtains the functional \eqref{func N}.

Let us also mention an equivalent setting of the problem, first proposed in \cite{BG}. The graph of $u$ is the lower part of the boundary of the convex body
\begin{equation*}\label{C_u}
\CMu = \{ (x,z) :\, x \in \Om,\ u(x) \le z \le M \},
\end{equation*}
and the functional in \eqref{func general} can be represented in the form $F(u) = \FFF(\CMu)$, where
\beq\label{func body}
\FFF(C) = \int_{\pl C} g(n_\xi)\, d\xi.
\eeq
Here $n_\xi$ is the outward normal to the convex body $C$ at a regular point $\xi \in \pl C$, and for $n = (n_1, n_2, n_3) \in S^2$,
$$
g(n) = \left\{ \begin{array}{ll}
-n_3 f\big( \frac{n_1}{n_3},\, \frac{n_2}{n_3} \big), & \text{if} \ \, n_3 < 0; \\
0 & \text{if} \ \, n_3 \ge 0
\end{array} \right.
$$
(see \cite{sliding}). In particular, if $f(\xi) = 1/(1 + |\xi|^2)$, we have $g(n) = \big( n_3 \big)_-^3$, where $( \cdot )_-$ means the negative part of a real number, $z_- = \max \{ -z, 0 \}$.

Correspondingly, the minimization problem for $F$ in \eqref{func general} can be stated in the equivalent form:

\begin{quote}
Minimize $\FFF(C)$ in \eqref{func body} in the class of convex bodies $\{ C :\, C_1 \subset C \subset C_2 \}$, where $C_2 = \Om \times [0,\, M]$ is the cylinder with the base $\Om$ and height $M$ and $C_1 = \Om \times \{ M \}$ is its upper end.
\end{quote}

\begin{zam}
This problem admits a natural mechanical interpretation. Imagine a body moving in a highly rarefied medium where Newton's assumptions are generally not satisfied: the absolute temperature is nonzero and/or the body-particle reflections are not elastic. In this case the resistance (the projection of the drag force on the direction of motion) is given by the functional \eqref{func general} (or, equivalently, by \eqref{func body}), where the function $f$ (or $g$) can be determined if we know the law of body-particle reflection, the temperature, and composition of the medium.
\end{zam}

The $C^2$ result in the paper \cite{BrFK} was formulated and proved for Newton's case $f(\xi) = 1/(1 + |\xi|^2)$. Its natural generalization provided in \cite{sliding} reads as follows.

\begin{theorem}\label{tpropo1} (Theorem 1 in \cite{sliding}).
Let $f$ be a $C^2$ function and the second derivative $f''(\xi)$ have at least one negative eigenvalue for all values of the argument $\xi$. Assume that $u$ minimizes functional \eqref{func general} in the class $\CCC_M$ and $u$ is $C^2$ in $\UUU$, where $\UUU \subset \Om$ is an open set. Then $\det u''(x) = 0$ for all $x \in \UUU$.
\end{theorem}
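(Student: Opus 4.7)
\textbf{Proof proposal for Theorem~\ref{tpropo1}.}
The plan is to argue by contradiction: assuming $\det u''(x_0) \neq 0$ for some $x_0 \in \UUU$, construct an admissible compactly-supported perturbation $\eta$ with $F(u+t\eta) < F(u)$ for all sufficiently small $|t|$.

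First I would set up the local geometry. Since $u$ is convex and $C^2$ on $\UUU$, we have $u''(x_0) \succeq 0$, and $\det u''(x_0) \neq 0$ forces $u''(x_0) \succ 0$. By continuity, $u'' \succeq \del I$ on a small open ball $B_0 \subset \UUU$ around $x_0$, so $u$ is strictly convex there. Strict convexity on the open set $B_0$ implies $u$ is non-constant on $B_0$, so I can shift to a nearby point (still called $x_0$) at which $u(x_0) \in (0,M)$; by continuity, $u$ takes values in a compact subset of $(0,M)$ on some ball $B \subset B_0$ centered at $x_0$. Let $v$ be a unit eigenvector of $f''(\nabla u(x_0))$ corresponding to a negative eigenvalue; after rotating coordinates I assume $v = e_1$, so $f''_{11}(\nabla u(x)) \le -c < 0$ on $B$ (shrinking $B$ if needed).

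Next I would construct the perturbation. Choose a one-dimensional bump $\phi \in C^2_c(\RR)$ with very small support around $0$, and a plateau cut-off $\chi \in C^2_c(\RR)$ which equals $1$ on a neighborhood of $\text{supp}(\phi)$, so that $\eta(x) := \phi(x_1)\chi(x_2)$ is compactly supported in $B$ and satisfies $\chi' \equiv 0$ on $\text{supp}(\phi)$. Then $\nabla \eta = (\phi'(x_1),\,0)$ on $\text{supp}(\phi)$ and $\nabla\eta = 0$ off $\text{supp}(\phi)$. For $|t|$ small (say $|t| < \del/\|\eta''\|_\infty$), $u + t\eta$ is $C^2$ on $\UUU$ with Hessian $\succeq (\del - |t|\|\eta''\|_\infty) I \succ 0$ on $\text{supp}(\eta)$, equals $u$ outside, and lies in $[0,M]$; a routine one-dimensional check (restricting to any line crossing $\partial\,\text{supp}(\eta)$ where $\eta$ vanishes to second order at the boundary of its support) shows $u + t\eta$ is globally convex on $\Om$. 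So $u \pm t\eta \in \CCC_M$ for all small $t$.

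Now I would Taylor-expand. Since $f \in C^2$ and $\nabla\eta$ is bounded, uniformly on $\text{supp}(\eta)$ we get
\beqo
F(u + t\eta) - F(u) = t\!\int\! f'(\nabla u)\!\cdot\!\nabla\eta\,dx + \tfrac{t^2}{2}\!\int\!\langle f''(\nabla u)\nabla\eta,\nabla\eta\rangle\,dx + o(t^2).
\eeqo
Because both $u + t\eta$ and $u - t\eta$ are admissible, minimality of $u$ forces the linear term to vanish, and by the particular form of $\nabla\eta$ the quadratic term reduces to
\beqo
Q(\eta) \;=\; \int f''_{11}(\nabla u(x))\, \phi'(x_1)^2\, \chi(x_2)^2\, dx \;\le\; -c\!\int \phi'(x_1)^2 \chi(x_2)^2 \,dx \;<\; 0.
\eeqo
Hence $F(u+t\eta) - F(u) = \tfrac{t^2}{2}Q(\eta) + o(t^2) < 0$ for all sufficiently small $t \neq 0$, contradicting the minimality of $u$.

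The main obstacle I anticipate is the bookkeeping in step two: ensuring that the perturbation $u + t\eta$ remains globally convex on all of $\Om$, not merely pointwise Hessian-convex on $\UUU$. The plateau choice of $\chi$ is meant to guarantee that $\nabla\eta$ is supported in the $e_1$ direction so that the destabilizing eigendirection of $f''$ is isolated cleanly; otherwise cross terms involving $\chi'$ would enter $Q(\eta)$ and could spoil the sign. The other small point requiring care is selecting $x_0$ with $u(x_0)$ strictly interior to $[0,M]$ so that the upper/lower constraints of $\CCC_M$ do not bind for small $t$.
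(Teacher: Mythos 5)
Theorem~\ref{tpropo1} is not proved in this paper at all: it is imported from \cite{sliding} (and, for the Newtonian integrand, from \cite{BrFK}); the nose-stretching machinery developed here is only used for the $C^1$ statements. So your argument has to stand on its own. Its overall strategy --- deduce local strict convexity of $u$ from $\det u''(x_0)\neq 0$, check that $u\pm t\eta$ stays in $\CCC_M$ for small $t$ and compactly supported $C^2$ perturbations $\eta$, kill the first variation by using both signs of $t$, and contradict minimality through the sign of the second variation against the negative eigenvalue of $f''$ --- is the classical route of \cite{BrFK}, and the admissibility/gluing steps (choice of $x_0$ with $0<u(x_0)<M$ via strict convexity, convexity of $u+t\eta$ across $\partial\,\mathrm{supp}(\eta)$ using that $u$ is $C^1$ on $\UUU$) are fine.

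There is, however, a genuine gap in the key computation. The test function $\eta(x)=\phi(x_1)\chi(x_2)$ cannot have the property you assign to it: $\chi$ depends on $x_2$ while $\mathrm{supp}(\phi)$ constrains $x_1$, so the requirement ``$\chi'\equiv 0$ on $\mathrm{supp}(\phi)$'' does not make sense, and in fact $\partial_{x_2}\eta=\phi(x_1)\chi'(x_2)$ is nonzero on the product of $\mathrm{supp}(\phi)$ with the transition zone of $\chi$; if $\partial_{x_2}\eta$ vanished identically, $\eta$ would depend on $x_1$ only and could not be compactly supported inside $B$. Hence the quadratic term is really
\beqo
Q(\eta)=\int \Big[ f''_{11}(\nabla u)\,\phi'^2\chi^2 + 2 f''_{12}(\nabla u)\,\phi\phi'\chi\chi' + f''_{22}(\nabla u)\,\phi^2\chi'^2 \Big]\,dx ,
\eeqo
and since the other eigenvalue of $f''$ may be large and positive, the displayed inequality $Q(\eta)\le -c\int\phi'^2\chi^2$ does not follow; the sign of $Q(\eta)$ is not determined for a fixed separable bump. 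The standard repair is to make $\nabla\eta$ asymptotically parallel to $e_1$ by anisotropic scaling or oscillation: take $\phi_\ve(x_1)=\ve\,\psi\big((x_1-a)/\ve\big)$ with fixed $\psi\in C^2_c$, or the Legendre--Hadamard test functions $\eta_\ve=\ve\,\zeta(x)\sin(x_1/\ve)$. Then $\int\phi_\ve'^2\,dx_1\sim\ve$ while $\int\phi_\ve^2\,dx_1\sim\ve^3$, so the cross and $f''_{22}$ contributions are of lower order and $Q(\eta_\ve)<0$ for every sufficiently small fixed $\ve$; global convexity of $u\pm t\eta_\ve$ still holds for $|t|$ small depending on $\ve$ (the Hessian of $\eta_\ve$ is of size $1/\ve$, but only some admissible $t\neq 0$ is needed), and the contradiction with minimality is restored. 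With this correction the proof is sound, though it is a different, purely local second-variation argument rather than anything resembling the global convex-hull variations used in this paper.
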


The $C^1$ result in \cite{sliding} is as follows.

\begin{theorem}\label{tpropo2} (Corollary 3 in \cite{sliding}).
Let $f$ be a bounded continuous function. Then there exists a function $u$ minimizing functional \eqref{func general} in $\CCC_M$ and possessing the following property. If $u$ is $C^1$ in $\UUU$ and $u > 0$ in $\UUU$, where $\UUU \subset \Om$ is an open set, then {\rm graph}$\big( u\rfloor_\UUU \big)$ does not contain extreme points of the epigraph of $u$.
\end{theorem}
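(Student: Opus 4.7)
My plan is to single out a specific minimizer by a maximality argument and then derive the extreme-point property by contradiction via the nose stretching method from \cite{sliding}.

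First I would set up a compactness/continuity framework for $\FFF$. Because $f$ is bounded and continuous, the induced density $g$ on $S^2$ is bounded and continuous: as $n_3 \to 0^-$, $|g(n)| \le \|f\|_\infty\, |n_3| \to 0$, which matches $g \equiv 0$ on $\{n_3 \ge 0\}$. Combined with the weak continuity of surface area measures under Hausdorff convergence of convex bodies, this makes $\FFF$ continuous on the compact family $\{C : C_1 \subseteq C \subseteq C_2\}$. Hence the set $\MMM$ of minimizing bodies is a nonempty closed subset. I order $\MMM$ by inclusion: any chain has as upper bound the Hausdorff limit of the chain (the closure of its union), and this limit lies in $\MMM$ by continuity of $\FFF$. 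Zorn's lemma then provides a maximal body $C^* = C_{u^*} \in \MMM$; equivalently, $u^*$ is a minimizer such that no other minimizer $\tilde u \in \CCC_M$ satisfies $\tilde u \le u^*$ pointwise with strict inequality somewhere. I take this $u^*$ as the candidate.

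Next, assume for contradiction that $u^*$ fails the stated property: there are an open set $\UUU \subseteq \Om$ and a point $x_0 \in \UUU$ with $u^* \in C^1(\UUU)$, $u^*(x_0) > 0$, and $(x_0, u^*(x_0))$ an extreme point of epi$(u^*)$. The $C^1$ hypothesis supplies a unique support hyperplane $\pi$ to $C^*$ at $(x_0, u^*(x_0))$ with outer unit normal $n_0 \in S^2$; extremality says that $(x_0, u^*(x_0))$ is not in the relative interior of any segment contained in $\pl C^*$. Apply the nose stretching of \cite{sliding}: choose $\delta \in (0, u^*(x_0))$ small and $O_\delta = (x_0,\, u^*(x_0) - \delta)$, which lies in $\Om \times (0, M)$ and outside $C^*$, and form
\[
\tilde C_\delta = \conv\bigl(C^* \cup \{O_\delta\}\bigr).
\]
Then $\tilde C_\delta$ strictly contains $C^*$ and lies in $C_2$, and its lower boundary is the graph of a function $\tilde u_\delta \in \CCC_M$ with $\tilde u_\delta \le u^*$ and strict inequality on a neighborhood of $x_0$.

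The key step, and the main obstacle, is to show $\FFF(\tilde C_\delta) \le \FFF(C^*)$ for some $\delta > 0$. Granting this, minimality of $u^*$ forces equality, making $\tilde u_\delta$ a minimizer whose body strictly contains $C^*$ — contradicting the maximality of $C^*$ in Zorn's selection. To justify the inequality I would compare the surface area measures of $C^*$ and $\tilde C_\delta$: the piece of $\pl C^*$ hidden inside $\tilde C_\delta$ carries normals clustered in a small neighborhood $V_\delta$ of $n_0$ (by $C^1$-ness at $x_0$), while the newly exposed conical surface of $\tilde C_\delta$ carries normals in another small neighborhood of $n_0$ determined by the tangent cone from $O_\delta$. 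Because $(x_0, u^*(x_0))$ is extreme with unique tangent plane $\pi$, the two pieces project to sets of equal area onto $\pi$, so by continuity of $g$ around $n_0$ their $g$-integrals are each close to $g(n_0)$ times the corresponding surface measures and cancel to leading order, with the next-order term arrangeable to be non-positive.

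The main obstacle is precisely this perturbation analysis: controlling the change of $\FFF$ under nose stretching using only continuity (not monotonicity, convexity, or concavity) of $g$. The extremality hypothesis is what makes the analysis viable, by confining all normals of the swapped pieces of $\pl C^*$ to a shrinking neighborhood of the single direction $n_0$, so that continuity of $g$ suffices to cancel leading-order contributions. Without extremality, the relevant normals could span a macroscopic region of $S^2$ where $g$ varies substantially, and no such cancellation could be guaranteed.
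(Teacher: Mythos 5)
Your overall architecture (select a distinguished minimizer whose body is maximal with respect to inclusion, then use nose stretching to produce a second minimizer strictly below it and contradict maximality) is the right one, and the continuity-of-$\FFF$/compactness and Zorn (or, more simply, maximal volume among minimizers) part is sound. The genuine gap is the step you yourself flag as the main obstacle: the claim $\FFF(\tilde C_\delta) \le \FFF(C^*)$ for some $\delta$. Your cancellation argument only shows that the hidden cap and the newly exposed conical surface both have normals converging to $n_0$ and project onto the same planar region $D_\delta$, hence their contributions differ by $o(|D_\delta|)$. That estimate is sign-blind: with $f$ merely bounded and continuous there is no second-order expansion, and nothing prevents the difference from being positive (of size $o(|D_\delta|)$) for every $\delta$. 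The phrase ``the next-order term arrangeable to be non-positive'' has no justification, and the fixed one-shot perturbation $C^* \mapsto \conv(C^*\cup\{O_\delta\})$ cannot deliver the required inequality from continuity alone.

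The way this is actually made to work (in \cite{sliding}, whose Corollary 3 is the statement at hand, and mirrored by the proof of Theorem \ref{t3} in the present paper) is to embed the variation in the one-parameter family $C_s=(1-s)C+s\tilde C$ for $0\le s\le 1$, extended to $s<0$ by intersecting $C$ with shrunken copies. Decomposing $\pl C_s$ according to the normals (which planes of support touch $\pl C$, the new point, or both) and using the homothety structure, one computes $F(u^{(s)})$ \emph{exactly} as a quadratic polynomial in $s$ on $[0,1]$, and shows the two-sided derivative at $s=0$ exists. Minimality of $u^*$ (together with $0<u^*<M$ near $\check x$, which keeps $u^{(s)}$ admissible for small $|s|$) forces that derivative to vanish, and the explicit quadratic form then forces $F(u^{(1)})=F(u^{(0)})$; thus $\tilde u$ is another minimizer with $\tilde u\le u^*$, $\tilde u\ne u^*$, which is exactly the input your maximality argument needs. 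In short: the sign information comes from an exact global computation along the interpolation family combined with optimality at $s=0$, not from a local leading-order cancellation, and without that ingredient your proof does not close.
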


\begin{zam}
The words {\rm "there exists a solution"} in Theorem \ref{tpropo2} cannot be replaced with {\rm "for any solution"}. Actually, if the function $f$ is locally affine, there may exist a solution $u$ and an open set $\UUU$ such that $u$ is $C^1$ and positive in $\UUU$, and {\rm graph}$\big( u\rfloor_\UUU \big)$ contains extreme points. Consider two examples.

1. $f$ is piecewise affine, $f(\xi) = (-\langle a, \xi \rangle + b)_+$ for a certain vector $a = (a_1, a_2)$ and for $b > 0$. Here and in what follows, $\langle \cdot \,, \cdot \rangle$ means scalar product. This function corresponds to the mechanical model when the particles of the incident flow with velocities equal to $(a_1, a_2, b)$ get stuck on the body's surface after the collision. In this case any function $u \in \CCC_M$ satisfying $\langle a, \nabla u \rangle < b$ is a solution.

2. $f \ge 0$, and $f(\xi) = 0$ when $|\xi| \le r$. Here $r > 0$.Then any function $u \in \CCC_M$ with $|\nabla u| \le r$ is a solution.

Note that in both examples the set $\{ \xi : \det f''(\xi) = 0 \} \subset \RRR^2$ is large: it is the complement of the line $\langle a, \xi \rangle = b$ in the first example, and contains the open ball $\{ |\xi| < r \}$ in the second one.
\end{zam}

Given a convex set $C \subset \RRR^3$, a point $\xi \in \pl C$ is called {\it regular}, if there is a single plane of support to $C$ at $\xi$, and {\it singular} otherwise. The set of singular points of $\pl C$ is denoted as sing$(C)$. The set of extremal points of $C$ is denoted as ext$(C)$. Let {\rm epi}$(u) = \{ (x,z) : x \in \Om,\, z \ge u(x) \} \subset \RRR^3$ denote the epigraph of $u$. A point $x$ in the interior of $\Om$ is called {\it regular point} of $u$, if there exists $\nabla u(x)$, and {\it singular} otherwise. The set of singular points of $u$ is denoted as sing$(u)$. We have
$$
\text{\rm graph}\big( u\rfloor_{\text{\rm sing}(u)} \big) \subset \text{\rm sing}\, (\text{\rm epi}(u)) \subset \text{\rm graph}\big( u\rfloor_{\text{\rm sing}(u)} \big) \cup (\pl\Om \times \RRR).
$$

Later on in the text we will use the following notation. Let $G$ be a Borel subset of graph$(u)$ and pr$_x(G)$ be its orthogonal projection on the $x$-plane; then by definition
$$
F(G) = \int_{\text{pr}_x(G)} f(\nabla u(x)) dx.
$$
It is easy to check that $F(G)$ is well defined; that is, if $G \subset \text{graph}(u_1)$ and $G \subset \text{graph}(u_2)$ for two convex functions $u_1$ and $u_2$, then $\int_{\text{pr}_x(G)} f(\nabla u_1(x)) dx = \int_{\text{pr}_x(G)} f(\nabla u_2(x)) dx$. Additionally, if $G_2$ is homothetic to $G_1$ with ratio $r \ge 0$, that is, $G_2 = rG_1 + v$ with $v \in \RRR^3$, then $F(G_2) = r^2 F(G_1).$

The main result of this paper is the following theorem.

\begin{theorem}\label{cor}
Let $f$ be a $C^2$ function, and let $\{ \xi : \, \text{the smallest eigenvalue of} \, f''(\xi) \, \text{is zero} \}$ be a closed nowhere dense set in $\RRR^2$. If $u$ minimizes functional \eqref{func general} in $\CCC_M$ then
$$
\text{\rm ext} (\text{\rm epi}(u)) \subset \overline{\text{\rm sing} (\text{\rm epi}(u))}.
$$
Here and in what follows, bar means closure.
\end{theorem}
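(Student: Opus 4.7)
Proof plan.

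The strategy is proof by contradiction via a two-point nose-stretching argument. Suppose some extremal point $\xi_0=(x_0,u(x_0))$ of $\text{epi}(u)$ lies at positive distance from $\overline{\text{sing}(\text{epi}(u))}$. Then a relative neighborhood of $\xi_0$ in $\pl\,\text{epi}(u)$ consists only of regular points, so $u$ is differentiable on an open neighborhood $V\subset\Om$ of $x_0$, and by convexity $u\in C^1(V)$. Write $p:=\nabla u(x_0)$. Extremality of $\xi_0$ combined with $C^1$ regularity implies that the tangent plane to graph$(u)$ at $\xi_0$ meets graph$(u)$ at most along an isolated line segment emanating from $\xi_0$ (and generically only at $\xi_0$ itself), so there is at least one direction $v_0$ along which $u(x_0+tv_0)>u(x_0)+t\langle p,v_0\rangle$ strictly for small $t>0$.

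I will construct an admissible competitor $\tilde u\in\CCC_M$ with $F(\tilde u)<F(u)$, contradicting optimality. Pick two points $A,B\in C_2\setminus C_M(u)$ close to $\xi_0$, located on opposite sides of $x_0$ along directions where the strict inequality above holds, and sitting slightly below graph$(u)$; then set $\tilde C:=\conv(C_M(u)\cup\{A,B\})$. Taking $\tilde u$ to be the lower envelope of $\tilde C$, we have $\tilde u\le u$ with $\tilde u<u$ only on a small region $R\subset V$ surrounding $x_0$. Inside $R$ the graph of $\tilde u$ decomposes into finitely many planar facets $R_1,\ldots,R_k$ with constant gradients $\tilde p_1,\ldots,\tilde p_k$; the facets and gradients are explicit functions of the design parameters, namely the scale $\e$ of the stretching, the direction of $B-A$, and the heights of $A$ and $B$ below graph$(u)$.

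The change of cost $\Delta F=F(\tilde u)-F(u)=\int_R[f(\nabla\tilde u)-f(\nabla u)]\,dx$ is analyzed by Taylor-expanding $f$. The hypothesis that $\{\det f''=0\}$ is nowhere dense lets me pick $p^*$ arbitrarily close to $p$ with $\det f''(p^*)\ne 0$, and expand around $p^*$ to second order. The zeroth-order terms cancel. Both $\sum_j|R_j|(\tilde p_j-p^*)$ and $\int_R(\nabla u-p^*)\,dx$ equal $|R|(\bar p-p^*)$, where $\bar p=|R|^{-1}\int_R\nabla u\,dx$ — the first identity using $\int_R\nabla\tilde u\,dx=\int_R\nabla u\,dx$ via the divergence theorem and $\tilde u=u$ on $\pl R$ — so the first-order terms cancel as well. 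The decisive piece is the quadratic comparison
\begin{equation*}
\sum_j|R_j|(\tilde p_j-p^*)^{\!\top} f''(p^*)(\tilde p_j-p^*)\quad\text{vs.}\quad\int_R(\nabla u-p^*)^{\!\top} f''(p^*)(\nabla u-p^*)\,dx,
\end{equation*}
and the goal is to force the left-hand side strictly smaller by a suitable choice of the nose-stretching parameters.

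The main obstacle is precisely this quadratic comparison. The use of two added points, rather than the single one employed in \cite{sliding}, supplies the direction of $B-A$ as an additional degree of freedom that I would use to align the spread of the $\tilde p_j$ around $p^*$ with a chosen eigendirection of $f''(p^*)$, while the two heights of $A$ and $B$ tune the spread magnitude; the extremality of $\xi_0$ guarantees that this induced spread is genuine and dominates, at the leading order in $\e$, the intrinsic spread of $\nabla u$ on $R$. Nondegeneracy $\det f''(p^*)\ne 0$ is exactly what makes this second-order analysis decisive: in a degenerate eigendirection the quadratic form would carry no information and one would have to pass to higher order, which explains the role of the nowhere-density hypothesis. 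Degenerate configurations such as $u(x_0)=0$ or $x_0\in\pl\Om$ are handled by minor adjustments in the placement of $A,B$ that keep them in $C_2$, without affecting the substance of the argument.
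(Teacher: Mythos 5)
There is a genuine gap, and it is structural: your single downward variation cannot work at extreme points where $f''(\nabla u(x_0))$ is positive definite. By your own accounting, after the zeroth- and first-order cancellations the sign of $\Delta F$ is governed by the induced spread term $\sum_j|R_j|(\tilde p_j-p^*)^{\top}f''(p^*)(\tilde p_j-p^*)$, which you argue dominates the intrinsic spread of $\nabla u$ on $R$; if $f''(p^*)$ is positive definite this dominant term is strictly positive, so $F(\tilde u)>F(u)$ no matter how you orient $B-A$ or tune the heights of $A,B$. The nowhere-density of $\{\det f''=0\}$ does not rescue this: positive definiteness is an open condition, so every admissible $p^*$ near $p$ gives the same wrong sign. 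This is exactly why the paper's variational result (Theorem \ref{t3}) carries the hypothesis that $f''(\nabla u(\check x))$ has a \emph{negative} eigenvalue, and why its proof uses a full one-parameter family $C_s$ including the cutting regime $s<0$ together with the concavity inequality \eqref{bigineq} as a fallback when the derivative at $s=0$ vanishes. The region $\Om_+$ where $f''(\nabla u)$ is positive definite is handled in the paper by a completely different mechanism: the comparison theorem of \cite{LP3} applied to $\hat u$, the function whose epigraph is ${\conv}\big(\text{epi}(u\rfloor_{\Om\setminus\Om_+})\big)$, which shows that optimality forces $\hat u=u$ and hence that no point over $\Om_+$ is extreme. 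Your outline contains no substitute for this step, so it does not prove the theorem.

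Two secondary points. First, your treatment of the degenerate set is not sound as stated: if $\det f''(p)=0$, expanding around a nearby nondegenerate $p^*$ gives a quadratic form whose relevant eigenvalue may be arbitrarily small, so the ``decisive'' second-order term need not dominate the error terms; the paper avoids this by never doing analysis at degenerate gradients at all, sweeping the corresponding normals (together with the direction $(0,0,-1)$ and horizontal normals) into the exceptional set $\EEE$ of the purely geometric Lemma \ref{lt1}, whose hypothesis is precisely that $\EEE$ contains no open subset of $S^2$ --- this is where the nowhere-density of $\{\det f''=0\}$ actually enters. Second, the cases $u(x_0)=0$ (where pushing the graph down violates the constraint $u\ge 0$, but where $\nabla u(x_0)=0$ so the normal lies in $\EEE$), $u(x_0)=M$, and $x_0\in\pl\Om$ are not ``minor adjustments''; in the paper they are handled by the specific choices of $\EEE$ and of the neighborhood $U'$ with $\ve<u<M-\ve$, and your argument needs an analogous mechanism.
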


It follows from this theorem that an optimal function $u$ is uniquely defined by the set of singular points of epi$(u)$.

Recall that $C(u) = \CMu = \{ (x,z) : x \in \Om, \ u(x) \le z \le M \} = \text{epi}(u) \cap \{ z \le M \}$.

The following statement is a corollary of Theorem \ref{cor}.

\begin{corollary}\label{cor1}
Under the conditions of Theorem \ref{cor}, if $u$ minimizes functional \eqref{func general} in the class $\CCC_M$, then $C(u)$ is the closure of the convex hull of {\rm sing}$(C(u))$,
$$
C(u) = \overline{\text{\rm \conv}(\text{\rm sing}\,(C(u)))}.
$$
\end{corollary}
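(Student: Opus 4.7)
The plan is to combine Minkowski's theorem for $C(u)$ with the statement of Theorem \ref{cor} by transferring extremality and singularity from $\text{epi}(u)$ to $C(u)$. Minkowski's theorem gives $C(u) = \overline{\conv(\text{ext}(C(u)))}$, and the inclusion $\overline{\conv(\text{sing}(C(u)))} \subset C(u)$ is automatic because $C(u)$ is closed and convex. Hence the whole corollary reduces to showing
$$
\text{ext}(C(u)) \subset \overline{\text{sing}(C(u))}.
$$
The key observation is that $\pl C(u)$ and $\pl\,\text{epi}(u)$ coincide locally away from the top face $\Om\times\{M\}$, so extremality and singularity carry over freely between the two bodies on the lower/lateral portion.

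I would fix $p=(x_0,z_0)\in\text{ext}(C(u))$ and split according to whether $z_0<M$ or $z_0=M$. In the case $z_0<M$, a neighborhood of $p$ in $\pl C(u)$ equals a neighborhood of $p$ in $\pl\,\text{epi}(u)$, so any convex combination in $\text{epi}(u)$ equal to $p$ is realized by points arbitrarily close to $p$, hence sits inside $C(u)$; thus $p\in\text{ext}(\text{epi}(u))$. Theorem \ref{cor} then supplies $p_n\in\text{sing}(\text{epi}(u))$ with $p_n\to p$, and for all large $n$ the $z$-coordinate of $p_n$ is less than $M$, so by the same local coincidence $p_n\in\text{sing}(C(u))$. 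This shows $p\in\overline{\text{sing}(C(u))}$.

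In the case $z_0=M$, a simple midpoint argument forces $x_0\in\text{ext}(\Om)\subset\pl\Om$ (otherwise $x_0 = (x_1+x_2)/2$ with $x_1,x_2\in\Om$ gives $p=((x_1,M)+(x_2,M))/2$, contradicting extremality). At such a point, the upward normal $(0,0,1)$ and any horizontal lift $(\nu,0)$ of an outward normal $\nu\in S^1$ to $\Om$ at $x_0$ both define support planes of $C(u)$ at $p$, so $p\in\text{sing}(C(u))$ directly. Combining the two cases yields the claim, and the corollary follows. The only delicate point is the local identification $\pl C(u) = \pl\,\text{epi}(u)$ below the top face, which I expect to be routine; once that is in hand, extremality and singularity transfer both ways and Theorem \ref{cor} does all the real work.
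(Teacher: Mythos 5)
Your proof is correct and follows essentially the same route as the paper, which simply derives the corollary from Minkowski's theorem together with Theorem \ref{cor}. The paper leaves the transfer of extremality and singularity between $\text{epi}(u)$ and $C(u)$ implicit; your two-case argument (the shrinking argument for $z_0<M$ and the two explicit support planes at $(x_0,M)$ with $x_0\in\text{ext}(\Om)$) supplies exactly those routine details and is sound.
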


\begin{proof}
We have
$$
\big[ C(u) \cap \text{\rm sing} (\text{\rm epi}(u)) \big] \cup (\pl\Om \times \{ M \}) \subset \text{\rm sing}\,(C(u))
$$ $$
\text{and} \quad \text{\rm ext}\,(C(u)) \subset \big[ C(u) \cap \text{\rm ext} (\text{\rm epi}(u)) \big]  \cup (\pl\Om \times \{ M \}),
$$
hence by Theorem \ref{cor},
$$
\text{\rm ext}\,(C(u)) \subset \big[ C(u) \cap \text{\rm ext} (\text{\rm epi}(u)) \big] \cup (\pl\Om \times \{ M \})
$$ $$
\subset \big[ C(u) \cap \overline{\text{\rm sing} (\text{\rm epi}(u))} \big] \cup (\pl\Om \times \{ M \}) \subset \overline{\text{\rm sing}\,(C(u))},
$$
and by Minkowski's theorem,
$$
C(u) = \conv\big( \text{\rm ext}\,(C(u)) \big) \subset \conv\big( \overline{\text{\rm sing}\,(C(u))} \big) = \overline{\text{\rm \conv}(\text{\rm sing}\,(C(u)))}.
$$
The inverse inclusion is obvious.
\end{proof}

Theorem \ref{cor} and Corollary \ref{cor1} are applicable to the classical case $f(\xi) = 1/(1 + |\xi|^2)$, since $f$ is $C^2$ and the smallest eigenvalue of $f''$ is always negative, and therefore, the set $\{ \xi : \, \text{the smallest eigenvalue of} \, f''(\xi) \, \text{is zero} \}$ is empty.

We will derive Theorem \ref{cor} from the following Theorem \ref{t3} and Lemma \ref{lt1}.

\begin{theorem}\label{t3}
Let $f : \RRR^2 \to \RRR$ be a $C^2$ function.
Assume that a convex function $u : \Om \to \RRR$, an open set $\UUU \subset \Om$, and a point $\check x \in \UUU$ are such that

(i) $u$ is $C^1$ in $\UUU$;

(ii) $(\check x, u(\check x))$ is an extreme point of epi$(u)$;

(iii) the smallest eigenvalue of $f''(\nabla u(\check x))$ is nonzero. \\
Then for any $\ve > 0$ there is a convex function $\uu$ on $\Om$ such that\,

{\rm (a)} $\uu\rfloor_{\Om\setminus\UUU} = u\rfloor_{\Om\setminus\UUU}$,

 {\rm (b)}~$|u - \uu| < \ve$, and

 {\rm (c)}~$F(\uu) < F(u)$.
       %$$    \text{\rm (c)} \int_\UUU f(\nabla\uu(x))\, dx < \int_\UUU f(\nabla u(x))\, dx.      $$
\end{theorem}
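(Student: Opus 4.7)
\emph{Normalization.} Translate and tilt so that $\check x = 0$, $u(\check x) = 0$, $\nabla u(\check x) = 0$, and the negative eigendirection of $H := f''(0)$ is $e_1$ (so $H_{11} < 0$). Subtracting the linear function $\langle \check p, x\rangle$ from $u$ replaces $f$ by $\tilde f(\xi) = f(\xi + \check p)$, preserving $\tilde f''(0) = f''(\check p)$. Under this normalization, $u \ge 0$ is convex near $0$, and condition (ii) becomes: $0$ is an extreme point of $\{u = 0\}$.

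\emph{Two-point nose-stretching.} For small parameters $s, h > 0$ to be tuned, introduce the two nose points $A^\pm := (0, \pm h, -s)$, both strictly below $\mathrm{graph}(u)$ since $u \ge 0$. Define $u_1$ as the convex function with
\[
\mathrm{epi}(u_1) := \overline{\conv\bigl(\mathrm{epi}(u) \cup \{A^+, A^-\}\bigr)}.
\]
Then $u_1 \le u$, and the perturbed region $\Om' := \{u_1 < u\}$ is contained in a neighborhood of $0$ that shrinks as $s, h \to 0$; for small enough $(s, h)$, $\Om' \subset \UUU$ and $|u - u_1| \le s + \sup_{\Om'} u < \ve$, yielding (a), (b). Near $0$, the lower boundary of $\mathrm{epi}(u_1)$ carries two triangular $2$-faces $\mathcal T^\pm$ lying in support planes $z = \pm \kappa\, x_1 - s$, with $\kappa = \kappa(s, h) \to 0$; on $\mathcal T^\pm$ one has $\nabla u_1 \equiv \pm \kappa\, e_1$. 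Connecting $\mathcal T^\pm$ to the unmodified graph are two ruled-cone patches of $1$-faces emanating from $A^\pm$, together with the $1$-face $[A^+, A^-]$ along which $u_1 \equiv -s$.

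\emph{Sign of $F(u_1) - F(u)$.} Since $u_1 = u$ on $\partial \Om'$, Stokes' theorem gives $\int_{\Om'}(\nabla u_1 - \nabla u)\,dx = 0$, so after Taylor expanding $f$ to second order about $0$ the first-order term drops out and, at leading order,
\[
F(u_1) - F(u) = \tfrac12 \int_{\Om'} \bigl[\langle H\nabla u_1, \nabla u_1\rangle - \langle H \nabla u, \nabla u\rangle\bigr]\, dx + (\text{lower-order terms}).
\]
On $\mathcal T^\pm$ the bracketed integrand equals $\kappa^2 H_{11} < 0$; the construction's reflection symmetry $x_2 \mapsto -x_2$ kills the $H_{12}$ cross-terms; and the $\nabla u$ contribution is negligible because $\nabla u = o(1)$ on $\Om'$. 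A calibrated choice of $(s, h) \to 0$ making the triangular contribution dominate delivers (c).

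\emph{Main difficulty.} The quantitative comparison of the (negative) triangular contribution against the ruled-cone contribution is the delicate point: on the cones, $\nabla u_1$ spreads along the tangency curves of the cone-planes with $\mathrm{graph}(u)$, producing components in both $e_1$ and $e_2$ directions, and the $e_2$ component couples with $H_{22}$, which may be positive. Making the triangular term dominate requires both the $C^1$-continuity of $\nabla u$ at $\check x$ and the extreme-point hypothesis (which gives quantitative lower bounds on $u$ outside the extreme face at $\check x$), combined with a judicious choice of the ratio $h/\sqrt s$. A separate subtlety arises when the extreme face of $\mathrm{epi}(u)$ through $\check x$ is a nontrivial segment, in which case $A^\pm$ must be chosen so that $[A^+, A^-]$ is transverse to that segment; the rest of the analysis is parallel.
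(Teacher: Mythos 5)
Your construction is the right kind of object (a two-point ``nose stretch'' placed transversally to the negative eigendirection, which is essentially the paper's segment $I=[A,B]$), but the proof has two genuine gaps. The decisive one is (c): after the Stokes cancellation of the first-order term, everything hinges on showing that the negative triangle term $\tfrac12\kappa^2H_{11}\cdot\big|\mathrm{pr}(\mathcal{T}^+\cup\mathcal{T}^-)\big|$ beats both the cone-patch contributions (whose $e_2$-components couple to a possibly positive $H_{22}$) and the subtracted $\langle H\nabla u,\nabla u\rangle$ term, as well as the Taylor remainders $o(|\nabla u_1|^2+|\nabla u|^2)$ --- all of these are of comparable (vanishing) size, so ``$\nabla u=o(1)$'' proves nothing, and you explicitly defer the calibration of $(s,h)$ that is supposed to settle it. The paper's proof indicates this is not a removable technicality: it embeds $u$ and the fully stretched body $\tilde C=\conv(C\cup I)$ into a family $C_s$, $s\in(s_0,1]$, computes $F(u^{(s)})$ \emph{exactly} as a quadratic polynomial in $s$ on $[0,1)$ (no Taylor expansion of $f$ in the gradient), shows the two-sided derivative at $s=0$ equals $a_3-c_1$, and observes that this derivative may vanish; in that degenerate case the strict decrease is obtained only for $s$ near $1$, via the strict-concavity (Jensen) inequality $a_3>a_4$ along the segment $[\xi_-,\xi_+]\times\{0\}$. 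Your sketch works only with the analogue of the $s=1$ endpoint and a leading-order expansion, and contains no mechanism replacing this dichotomy, so even granting the bookkeeping, negativity for some small $(s,h)$ is not established.

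The second gap concerns (a)--(b): the claim that $\Om'=\{u_1<u\}$ shrinks to a neighborhood of $\check x$ as $s,h\to 0$ is false exactly in the case you flag. Take $u(x_1,x_2)=\big(\max(0,-x_1)\big)^2+x_2^2$, $\check x=0$: hypotheses (i)--(iii) can all hold, $\{u=0\}$ is the ray $\{x_1\ge 0,\ x_2=0\}$ and $(0,0,0)$ is extreme. For any $T>0$ and small $\lam>0$ the point $\lam A^++(1-\lam)(T,0,0)$ lies at height $-\lam s<u$ over the base point $((1-\lam)T,\lam h)$, so $\Om'$ reaches out along the whole ray no matter how small $s,h$ are and no matter how $[A^+,A^-]$ is oriented; if the ray leaves $\UUU$, property (a) fails, so transversality does not repair this. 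The paper uses the extreme-point hypothesis quite differently to secure (a): it separates $(\check x,u(\check x))$ from $\conv(C\setminus D_r)$ by a plane $\Pi$ and places the nose segment inside $\conv(C\cup\{Q\})$ below $\Pi$, so that every boundary point on or above $\Pi$ retains a supporting plane of $\tilde C$ and only the cap below $\Pi$ (contained in graph$(u\rfloor_\UUU)$) is modified; some such global localization step, together with the non-infinitesimal comparison described above, is what your argument is missing.
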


\begin{lemma}\label{lt1}
Consider a convex set $C \subset \RRR^3$ with nonempty interior.
      %For $r \in \pl C$ denote by $n_r$ the outward normal to $C$ at $r$.
Take an open (in the relative topology) set $U \subset \pl C$ and suppose that all points of $U$ are regular. Take a set ${\cal E} \subset S^2$ that contains no open (in the relative topology) sets, and suppose that each point $r \in U$ with $n_r \not\in \EEE$ is not an extreme point of $C$. Then ${U}$ does not contain extreme points of $C$.
\end{lemma}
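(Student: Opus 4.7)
The plan is to argue by contradiction: assume some $r_0 \in \UUU$ is an extreme point of $C$, and derive a contradiction. By Straszewicz's theorem (the extreme points of a closed convex set lie in the closure of its exposed points), there exist exposed points $r_n \to r_0$; for $n$ large enough, $r_n \in \UUU$ (openness), hence $r_n$ is regular. Fix such an $r_* := r_n$ and parametrize $\pl C$ locally around $r_*$ as the graph $z = u(x)$ of a convex $C^1$ function $u$ on an open $V \subset \RRR^2$; let $x_*$ be the projection of $r_*$ and $\xi_* = \nabla u(x_*)$. Exposedness, i.e.\ $F_{n_{r_*}} = \{r_*\}$, translates to $\nabla u^{-1}(\xi_*) \cap V = \{x_*\}$; and by upper semicontinuity of the face map, for every $n'$ in a sufficiently small neighborhood of $n_{r_*}$ the whole face $F_{n'}$ is contained in $\UUU$.

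Next I would prove openness of $\nabla u$ at $x_*$: for any small ball $B = B(x_*, \epsilon)$ with $\overline B \subset V$, the image $\nabla u(B)$ contains a neighborhood $B(\xi_*, \delta)$ of $\xi_*$. This is a standard compactness argument applied to the convex function $\phi_\xi(x) := u(x) - \xi \cdot x$: at $\xi = \xi_*$ its minimum on $\overline B$ is attained uniquely at $x_*$, so it exceeds that minimum by some $\alpha > 0$ on the compact $\pl B$; for $\xi$ close to $\xi_*$, $\phi_\xi$ stays uniformly close to $\phi_{\xi_*}$ on $\overline B$, forcing the minimizer of $\phi_\xi$ to remain in the interior $B$, where $\nabla u = \xi$.

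Now I exploit the hypothesis. The set $\EEE$ corresponds under the local homeomorphism between outward normals to the graph and gradients of $u$ to a set $\EEE' \subset \RRR^2$ with empty interior, so there is a sequence $\xi_k \in B(\xi_*, \delta) \setminus \EEE'$ with $\xi_k \to \xi_*$. By openness, pick $x_k \in B$ with $\nabla u(x_k) = \xi_k$. The lemma's hypothesis applied to $(x_k, u(x_k)) \in \UUU$, whose normal corresponds to $\xi_k \notin \EEE'$, forces this point to be non-extreme in $C$, so $x_k$ lies in the relative interior of a maximal segment $[y_k, z_k]$ inside the convex level set $L_{\xi_k} := \nabla u^{-1}(\xi_k) \cap V$. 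The crucial observation is that neither $y_k$ nor $z_k$ can lie in $V$: such an endpoint would be an extreme point of $L_{\xi_k}$, and for large $k$ the whole face of $C$ with the corresponding normal lies inside $\UUU$, so lifting back would produce an extreme point of $C$ in $\UUU$ whose normal corresponds to $\xi_k \notin \EEE'$, contradicting the same hypothesis. Hence $[y_k, z_k]$ is a chord of $V$ passing through $x_k \in B(x_*, \epsilon)$.

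Finally I would pass to the limit. By continuity of $\nabla u$ and $\nabla u^{-1}(\xi_*) \cap V = \{x_*\}$, $x_k \to x_*$; since $x_*$ lies in the interior of $V$, the chord lengths are bounded below by some $\ell > 0$. Extracting a convergent subsequence of chord directions yields a limit segment through $x_*$ of length at least $\ell$ on which $\nabla u \equiv \xi_*$ by continuity, contradicting $\nabla u^{-1}(\xi_*) \cap V = \{x_*\}$. The main technical hurdle is the key endpoint step — specifically, passing from \emph{extreme point of the convex level set} to \emph{extreme point of $C$} via the face-in-$\UUU$ reduction — and treating the case in which $L_{\xi_k}$ is two-dimensional (a flat region rather than a single segment), which is handled by selecting any one-dimensional maximal segment through $x_k$ inside the flat region and running the same chord argument.
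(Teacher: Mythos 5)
Your overall route (Straszewicz to get a nearby exposed regular point $r_*$, a local $C^1$ graph parametrization, openness of $\nabla u$ near the exposed point, then chords of the level sets $L_{\xi_k}$ and a limiting argument contradicting exposedness) is genuinely different from the paper's argument and can be made to work, but the step you yourself call crucial --- ``neither $y_k$ nor $z_k$ can lie in $V$'' --- is not justified as written. You argue that an endpoint lying in $V$ ``would be an extreme point of $L_{\xi_k}$'', which then lifts to an extreme point of $C$. Two problems. First, when the exposed face $F_{n_k}$ is two-dimensional, an endpoint of a maximal segment through $x_k$ is merely a relative boundary point of $L_{\xi_k}$, not necessarily an extreme point; your proposed treatment of the 2D case (``select any one-dimensional maximal segment through $x_k$ and run the same chord argument'') faces exactly the same objection rather than resolving it. Second, even when the endpoint is an extreme point of $L_{\xi_k}$, to conclude it is an extreme point of $C$ you need it to be an extreme point of the entire face $F_{n_k}$, whereas $L_{\xi_k}$ is only the portion of (the projection of) that face lying over $V$; the inclusion $F_{n'} \subset \UUU$ that you invoke via ``upper semicontinuity of the face map'' does not give $F_{n_k} \subset \mathrm{graph}(u\rfloor_V)$, and for a general (possibly unbounded) convex $C$ that semicontinuity statement itself requires an argument.

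The gap is repairable by a dichotomy on the face rather than on segment endpoints. If $F_{n_k}$ is entirely contained in the graph of $u$ over $V$, it is a compact exposed face; any extreme point of it is an extreme point of $C$, lies in $\UUU$, is regular with normal $n_k \notin \EEE$ --- contradicting the hypothesis of the lemma. Otherwise $F_{n_k}$ contains a point not lying over $V$, and the segment joining $(x_k, u(x_k))$ to that point lies in the face, so its projection yields a segment in $L_{\xi_k}$ from $x_k$ out to $\pl V$, of length at least $\mathrm{dist}(\overline{B}, \pl V) > 0$; with this lower bound your limiting argument goes through. Note, however, that once you have isolated the statement ``a compact exposed face contained in $\UUU$ with normal outside $\EEE$ produces an extreme point of $C$ violating the hypothesis'', you can apply it directly near the original extreme point $r_0$: this is essentially the paper's proof, which cuts off a small cap by a plane separating $r_0$ from $\conv(C \setminus B_\ve(r_0))$, picks a normal $\xi \in \OOO \setminus \EEE$ near the normal at the deepest point of the cap, and takes an extreme point of the face $\{r : n_r = \xi\}$. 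Seen this way, the Straszewicz, gradient-openness and chord-limit machinery in your proposal is correct in spirit but unnecessary.
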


The proofs of Theorem \ref{t3} and Lemma \ref{lt1} are given in Section \ref{sec theorem} and Section \ref{sec technical}, respectively.

{\it Proof of Theorem \ref{cor}}.
Denote $C = \text{epi}(u)$,\, $U = \pl C \setminus \overline{\text{\rm sing} (C)}$, and $\EEE = \EEE_1 \cup \EEE_2 \cup \EEE_3  \subset S^2$, where $\EEE_1 = \{ (0,0,-1) \}$,\, $\EEE_2 = \{ (\xi,0) :\, |\xi| = 1 \}$, with $\xi = (\xi_1, \xi_2) \in \RRR^2$, and
%$$  = \big\{ (x, u(x)) :\, x \in \Om \setminus \overline{\text{sing}(u)} \, \big\} \cup \big\{ (x,z) :\, x \in \pl\Om,\, z > u(x) \big\}, $$
$$
\EEE_3 = \Big\{ \frac{(\xi, -1)}{\sqrt{|\xi|^2 + 1}} : \, \text{the smallest eigenvalue of} \, f''(\xi) \, \text{is zero} \Big\}.
$$
Since by the hypothesis of Theorem \ref{cor}, the set $\{ \xi : \, \text{the smallest eigenvalue of} \, f''(\xi) \, \text{is zero} \}$ is nowhere dense, and therefore, contains no open sets, we conclude that the sets $\EEE_3$, and therefore $\EEE$, contain no open sets.

Suppose that a point $\check r = (\check x, \check z) \in U$, with $n_{\check{r}} \not\in \EEE$, is an extreme point of $C$. Since $n_{\check r} \not\in \EEE_2$, we have $\check x \not\in \pl\Om$, and therefore, $\check r$ lies on the graph of $u$, that is, $\check z = u(\check x)$. Since $\check x$ lies in the interior of $\Om$, we have $\check z < M$. Since $n_{\check r} \not\in \EEE_1$, we conclude that $\check z > 0$.

Take a value $\ve > 0$ and an open set $\UUU' \subset \Om$ containing $\check x$ such that graph$( u\rfloor_{\UUU'} )$ is contained in $U$ and $\ve < u < M - \ve$ in $\UUU'$. Since $n_{\check r} \not\in \EEE_3$, the smallest eigenvalue of $f''(\nabla u(\check x))$ is nonzero.
By Theorem \ref{t3}, there exists a convex function $\uu$ on $\Om$ that coincides with $u$ outside $\UUU'$ and satisfies $u-\ve < \uu < u + \ve$ in $\UUU'$, and therefore, belongs to $\CCC_M$, and such that $F(\uu) < F(u)$. We have a contradiction with optimality of $u$.

This contradiction implies that each point $r \in U$ with $n_{r} \not\in \EEE$ is not an extreme point of $C$. Applying Lemma \ref{lt1}, one concludes that $U$ does not contain extreme points of $C$. It follows that $\text{\rm ext} (C) \subset \overline{\text{\rm sing} (C)}$.
 Theorem \ref{cor} is proved.
\hfill $\Box$

      %\begin{zam} We see that the set $\{ \det f'' = 0 \}$ is an important characteristic of the problem. If this set (which is of course closed) is nowhere dense then, according to Theorem \ref{cor}, each solution is uniquely defined by the set of its singular points. If, otherwise, the set $\{ \det f'' = 0 \}$ has nonempty interior then this assertion may not be true and, moreover, the solution set is extremely degenerated.    Assume now that $u$ is a solution, $\UUU \subset \Om$ is an open and simply connected set, the contour $\nabla u\rfloor_{\pl\UUU}$ bounds an open set $\VVV$, and $\det f'' \ne 0$ in $\VVV$. It follows that $f$ is either strictly convex or strictly concave on $\VVV$ and, according to \cite{LP3}, $u$ can be reconstructed by the boundary values $u\rfloor_{\Om \setminus \UUU}$. In the former case, the epigraph of $u$ is the convex hull of the epigraph of $u\rfloor_{\Om \setminus \UUU}$. In the latter case, the epigraph of $u$ is the intersection of the closed half-spaces bounded below by tangent planes to graph$\big( u\rfloor_{\Om \setminus \UUU} \big)$ at regular points of $u$. \end{zam}

\begin{zam}
Theorem \ref{cor} implies that the graph of a solution $u$ minus the closure of the set of its singular points is a developable surface.
Still, nothing is known about the set of singular points. We cannot even guarantee that it is not dense in the graph.
\end{zam}

We state the following
\vspace{2mm}

{\bf Conjecture.} {\it Let $u$ solve problem \eqref{func general} in $\CCC_M$ with $M > 0$. Then the set of singular points of $u$ is a closed nowhere dense subset of $\Om$.}

\section{Proof of Theorem \ref{t3}}\label{sec theorem}

%{\bf 2.1.}
\subsection{A toy example}
The proof contains many technical details, but its main idea is quite simple. To make it clearer, let us first consider the following toy problem in the 2D case.

\begin{propo}\label{light}
Consider the functional $F(u) = \int_a^b f(u'(x)) dx$ defined in the class of convex functions $u : [a,\, b] \to \RRR$, where $f : \RRR \to \RRR$ is a $C^2$ function. Assume that a convex function $u$ and a point $\check x \in (a,\, b)$ are such that

(i) $u$ is $C^1$ in a neighborhood of $\check x$;

(ii) $(\check x, u(\check x))$ is an extreme point of epi$(u)$;

(iii) $f''(u'(\check x)) \ne 0$. \\
Then for any $\ve > 0$ there is a convex function $\uu$ on $\Om$ such that\,

{\rm (a)} $\uu = u$ outside the $\ve$-neighborhood of $\check x$,

 {\rm (b)}~$|u - \uu| < \ve$, and

 {\rm (c)}~$F(\uu) < F(u)$.
       %$$    \text{\rm (c)} \int_\UUU f(\nabla\uu(x))\, dx < \int_\UUU f(\nabla u(x))\, dx.      $$
\end{propo}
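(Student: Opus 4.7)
The plan is to construct $\uu$ by a small local perturbation of $u$ supported near a well-chosen interior point $\check x\in(a_1+\ve,\,b_1-\ve)$, with the shape of the perturbation chosen according to the sign of $f''(u'(\check x))$. From (i) and (ii), $u$ is $C^1$ on $\UUU$ and strictly convex there (in 1D, every graph point being an extreme point of the epigraph simply means the graph contains no segment), so $u'$ is continuous and strictly increasing on $\UUU$. By (iii) and continuity of $f''$, the sign of $f''$ is constant on a neighborhood of $u'(\check x)$, splitting the proof into two symmetric cases.

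When $f''>0$ near $u'(\check x)$, I would replace $u$ on a short interval $I=[\check x-\delta,\check x+\delta]\subset\UUU$ by the chord joining its endpoint values. The resulting $\uu$ is convex, satisfies $\uu\ge u$, is supported in $I$, and $|\uu-u|<\ve$ for $\delta$ small. On $I$, $\uu'$ is constant equal to $\bar p:=(2\delta)^{-1}\int_I u'\,dx$ whereas $u'$ is non-constant (by strict convexity of $u$), and strict Jensen for the strictly convex $f$ on a neighborhood of $u'(\check x)$ gives $\int_I f(\uu')\,dx<\int_I f(u')\,dx$, i.e.\ $F(\uu)<F(u)$.

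When $f''<0$ near $u'(\check x)$, I would perform nose stretching: place $O_\eta=(\check x,u(\check x)-\eta)$ just below the graph and let $\uu$ be the convex function with epigraph $\conv(\text{epi}(u)\cup\{O_\eta\})$. Strict convexity of $u$ gives unique tangent lines from $O_\eta$ to the graph, touching at $x_-(\eta)<\check x<x_+(\eta)$ with both $x_\pm\to\check x$ as $\eta\downarrow 0$; the function $\uu$ equals $u$ off $[x_-,x_+]\subset\UUU$ and is piecewise affine with slopes $p_\pm=u'(x_\pm)$ inside, and $|u-\uu|<\ve$ for $\eta$ small. The push-forward measures of Lebesgue on $[x_-,x_+]$ by $\uu'$ and by $u'$ share the common mean $\bar p=(u(x_+)-u(x_-))/(x_+-x_-)$, but the former is concentrated on $\{p_-,p_+\}$ while the latter is non-atomic and supported in $(p_-,p_+)$, so the former dominates the latter strictly in convex order. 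Applied to the (locally strictly) concave $f$, this yields $\int_{x_-}^{x_+} f(\uu')\,dx<\int_{x_-}^{x_+} f(u')\,dx$, i.e.\ $F(\uu)<F(u)$.

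The main obstacle is securing the \emph{strict} inequality in both cases, which I expect to handle via the combination of (a) strict convexity of $u$ inherited from extremality of the graph (ensuring that $u'$ truly varies on the relevant interval and that the measure it induces is non-degenerate), and (b) strict convexity/concavity of $f$ on the range of slopes attained, together with a smallness condition on $\delta$ or $\eta$ that keeps the sign of $f''$ constant throughout. The chord-replacement half of the dichotomy does not appear to admit a direct 2D analogue — one cannot in general flatten a strictly convex 2D graph patch to a plane while preserving boundary values and remaining convex — which likely explains why the full Theorem \ref{t3} assumes $f''(\nabla u(\check x))$ has a \emph{negative} eigenvalue rather than merely nonzero determinant.
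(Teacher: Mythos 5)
Your constructions for (b) and (c) are correct, and they take a genuinely different route from the paper. The paper proves Proposition \ref{light} with a single construction: it fixes a point $O$ below the graph whose two tangency points have abscissas in $(a_1,\,a_1+\ve)$ and $(b_1-\ve,\,b_1)$, embeds $C=\text{epi}(u)$ and $\tilde C=\conv(C\cup O)$ in the one-parameter family $C_s$ (equal to $(1-s)C+s\tilde C$ for $0\le s\le 1$ and to $C\cap[(1-s)C+sO]$ for $s<0$), shows that $F(u^{(s)})$ is linear on $[0,1]$ and differentiable at $s=0$ with derivative $F(OA_0)+F(OB_0)-F(A_0B_0)$, which is nonzero by the sign of $f''$, and then moves a small distance in the favorable direction. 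You instead build the competitor directly in each sign case: chord replacement plus strict Jensen when $f''>0$, and a one-point nose stretch plus a convex-order comparison (same mean; two-point law versus non-atomic law of the derivative) when $f''<0$ --- your concave-case inequality is essentially the paper's inequality \eqref{bigineq}. Your route is more elementary for this 1D toy problem; the paper's derivative-along-a-family argument is deliberately heavier because it is the scheme that survives in the 3D proof of Theorem \ref{t3}. Your closing observation is also consistent with the paper: the flattening half has no convexity-preserving analogue for 2D domains, and in the proof of Theorem \ref{cor} the positive-definite case is treated by a different mechanism (the convex-hull argument based on \cite{LP3}), while Theorem \ref{t3} only assumes a negative eigenvalue.

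The one genuine shortfall is clause (a): the proposition requires not only $\uu=u$ outside $\UUU$ but also $\uu\ne u$ on $[a_1+\ve,\,b_1-\ve]$, i.e.\ the perturbation must change $u$ across the whole inner interval; this is exactly why the paper places the tangency points of the lines through $O$ within distance $\ve$ of the endpoints of $\UUU$. Your perturbations are supported in an arbitrarily small neighborhood of a single point $\check x$, so they coincide with $u$ on most of $[a_1+\ve,\,b_1-\ve]$. Enlarging the support is not a cosmetic fix: if you take the chord over, or the convex hull with a point seen from, the whole inner interval, then $\sup|u-\uu|$ is a fixed positive quantity determined by $u$, and (b) fails once $\ve$ is small. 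Reconciling ``differs everywhere on $[a_1+\ve,\,b_1-\ve]$'' with ``$|u-\uu|<\ve$'' is precisely what the interpolation family $C_s$ with $|s|$ small accomplishes, so some such one-parameter deformation (rather than a one-shot local surgery) appears unavoidable for the statement as written. Note also that hypothesis (iii) holds at every point of $\UUU$, so by continuity of $u'$ the sign of $f''(u'(x))$ is constant on all of $\UUU$, not merely near $\check x$; this is what allows the paper's construction, which spans essentially all of $\UUU$, to go through.
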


The proof given below is not the easiest one, but the underlying idea of small variation admits a generalization to the 3D case.
However, the 3D case is much more complicated, as will be seen later.
     %Of course, if $f''(u'(\check x)) > 0$, one can easily find a function $\uu$ affine in a neighborhood of $\check x$ satisfying the claim of the proposition. If $f''(u'(\check x)) < 0$, the required function $\uu$ should be affine on a left and a right half-neighborhoods of $\check x$ and regular at the left endpoint of the left half-neighborhood and at the right endpoint of the right half-neighborhood. The sketch of proof given below, however, is not so simple, but the underlying idea of small variation admits a generalization to the 3D case. However, the 3D case is much more complicated, as will be seen later.

\begin{proof}
Without loss of generality one can assume that the following condition is satisfied:

(iv) $u'(\check x) > u(x)$ for all $x < \check x$ and $u'(\check x) < u(x)$ for all $x > \check x$;\\
otherwise just slightly vary the point $\check x$.

We use the notation $C = \text{epi}(u)$.
Take a point $O = (\check x, \check z)$ below $C$ (so as $\check z < u(\check x)$), and consider two lines of support through $O$ to $C$. In general, the intersection of each line with $C$ is a segment, but slightly moving $O$ in the vertical direction, one can ensure that both these segments are  points. Let them be denoted as $A_0 = (x_1, u(x_1))$ and $B_0 = (x_2, u(x_2))$. We also denote $\bar A = (a, u(a))$ and $\bar B = (b, u(b))$; see Fig.~\ref{fig 2D}\,(a),\,(b). Using condition (iv), one can choose $O$ sufficiently close to $C$ so as $\langle 1 \rangle$ all points of $[x_1,\, x_2]$ are regular points of $u$, $\langle 2 \rangle$ $f''(u'(x))$ does not change the sign in $[x_1,\, x_2]$, and $\langle 3 \rangle$, the values $\check x - x_1$ and $x_2 - \check x$ are smaller than $\ve$.

Denote $\tilde C = \conv(C \cup O)$ and define the family of convex sets
$$
C_s =
\left\{
\begin{array}{ll}
(1-s)C + s\tilde C, & \text{if } 0 \le s \le 1;\\
C \cap \big[ (1-s)C + sO \big], & \text{if } s < 0.
\end{array}\right.
$$
Let $u^{(s)}$ be the function such that epi$(u^{(s)}) = C_s$. In particular, one has $u^{(0)} = u$. Of course, all functions $u^{(s)}$, $s \le 1$ are convex and are defined on $[a,\, b]$. See Fig.~\ref{fig 2D}.
          \begin{figure}[h]
\centering
%\hspace*{6mm}
\includegraphics[scale=0.3]{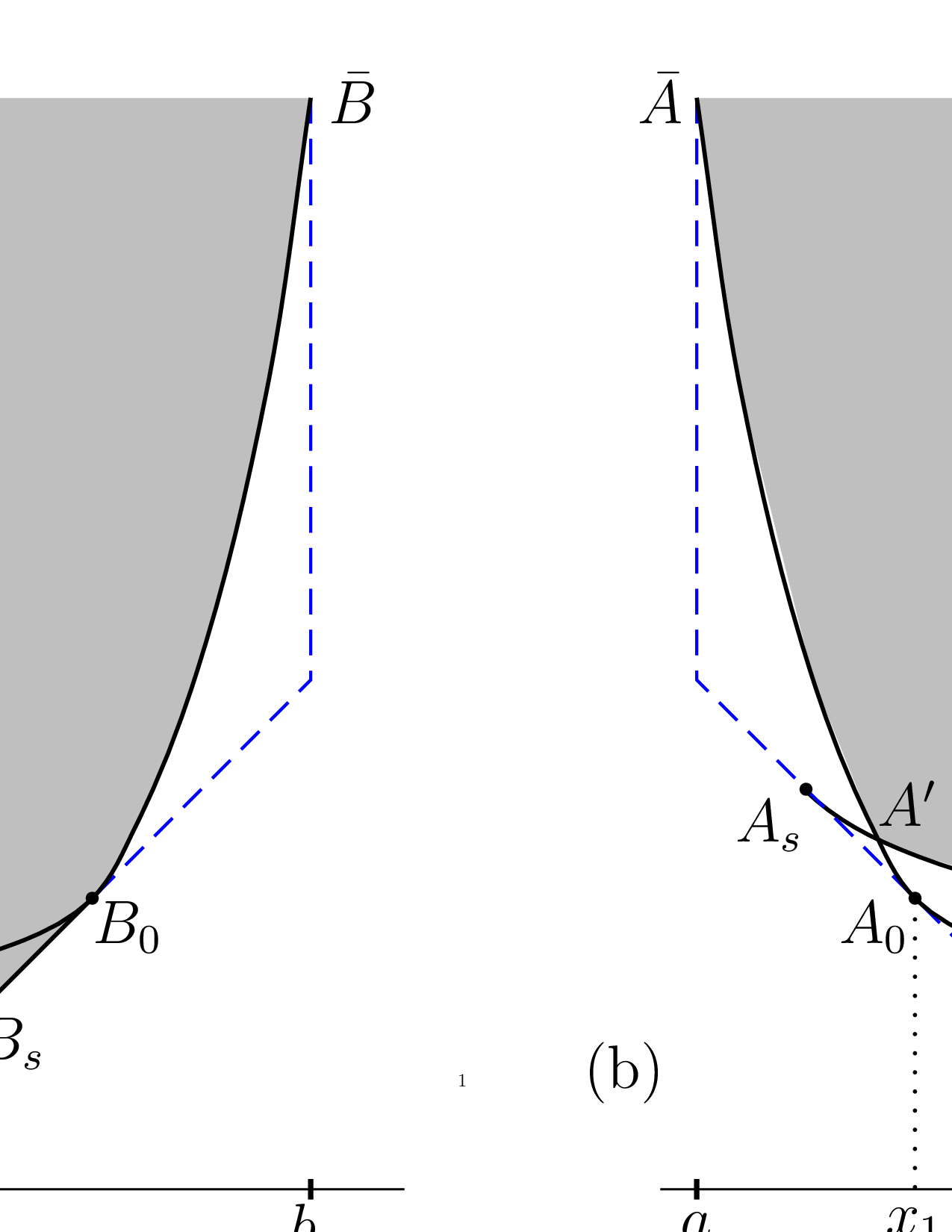}
\caption{The set $C_s$ is shown light gray (a) for $0 < s < 1$ and (b) for $s < 0$.}
\label{fig 2D}
\end{figure}

Denote by $F(\gam)$ the resistance of the curve $\gam$; in particular, the resistance of the arc $A_0 B_0$ is denoted as $F(A_0 B_0)$, and the resistances of the line segments $OA_0$ and $OB_0$ are $F(OA_0)$ and $(OB_0)$. The following relations hold,
\beq\label{f''positive}
\text{if} \ \, f''(u'(x)) > 0 \quad \text{in}\ \ [x_1,\, x_2], \ \ \text{then} \ \, F(OA_0) + F(OB_0) - F(A_0 B_0) > 0;
\eeq
\beq\label{f''negative}
\text{if} \ \, f''(u'(x)) < 0 \quad \text{in}\ \ [x_1,\, x_2], \ \ \text{then} \ \, F(OA_0) + F(OB_0) - F(A_0 B_0) < 0.
\eeq
Indeed, $f$ can be represented as the sum of an affine function and a function that equals zero when the argument equals $u'(x_1)$ or $u'(x_2)$. One easily checks that if $f$ is affine then $F(OA_0) + F(OB_0) = F(A_0 B_0)$. Therefore it suffices to prove \eqref{f''positive} and \eqref{f''negative} for a function $f$ satisfying $f(u'(x_1)) = f(u'(x_2)) = 0$. In this case we have $ F(OA_0) = F(OB_0) = 0$, and if $f'' > 0$ then $f(u'(x)) < 0$ in $[x_1,\, x_2]$, and therefore, $F(A_0 B_0) = \int_{x_1}^{x_2} f(u'(x)) dx < 0$, and \eqref{f''positive} is proved. Similarly, if $f'' < 0$ then $F(A_0 B_0) > 0$, and \eqref{f''negative} is proved.

The graph of $u$ is the union of curves $\bar A A_0$,\, $A_0 B_0$, and $B_0 \bar B$, and therefore,
$$
F(u) = F(\bar A A_0) + F(A_0 B_0) + F(B_0 \bar B).
$$

For $0 \le s \le 1$ the graph of $u^{(s)}$ is the union of curves $\bar A A_0$ and $B_0 \bar B$, line segments $A_0 A_s$ and $B_0 B_s$, and the curve $A_s B_s$; see Fig.~\ref{fig 2D}\,(a). The arc $A_s B_s$ is homothetic to $A_0 B_0$ with the ratio $1-s$ and the center at $O$, therefore $F(A_s B_s) = (1-s) F(A_0 B_0)$. The length of the segment $A_0 A_s$ is $s$ times the length of of the segment $OA_0$, hence $F(A_0 A_s) = sF(OA_0)$. Similarly, $F(B_0 B_s) = sF(OB_0)$. Thus,
$$
F(u^{(s)}) = F(\bar A A_0) + F(A_0 A_s) + F(A_s B_s) + F(B_s B_0) + F(B_0 \bar B)
$$ $$
= F(\bar A A_0) + F(\bar B B_0) + (1-s) F(A_0 B_0) + sF(OA_0) + sF(OB_0).
$$
It follows that for $0 \le s \le 1$, $F(u^{(s)})$ is linear in $s$.

For $s < 0$ the graph of $u^{(s)}$ is the union of curves $\bar A A'$,\, $A' B'$, and $B' \bar B$; see Fig.~\ref{fig 2D}\,(b). Correspondingly, the resistance is
$$
F(u^{(s)}) = F(\bar A A') + F(A' B') + F(B' \bar B)
$$ $$
= F(\bar A A_0) + F(\bar B B_0) + F(A_s B_s) - F(A_0 A_s) - F(B_0 B_s)
$$ $$
+ [F(A_0 A_s) - F(A_0 A') - F(A' A_s)] + [F(B_0 B_s) - F(B_0 B') - F(B' B_s)]
$$

     %The slope of the curves $A_0 A'$ and $A' A_s$ is $u'(x_1) + o(1),\, s \to 0^-$, where $o(1)$ is uniform over the curves.
$F(A_0 A_s)$ equals $f(u'(x_1))$ times the length of the projection of $A_0 A_s$ on the $x$-axis, which is proportional to $s$, and $F(A_0 A') + F(A' A_s)$ equals the integral of $f(u'(x_1) + o(1))$ over this projection. It follows that $F(A_0 A_s) - F(A_0 A') - F(A' A_s)$ is $o(s)$ as $s \to 0^-$, and the same is true for $F(B_0 B_s) - F(B_0 B') - F(B' B_s)$. Hence we obtain
$$
F(u^{(s)}) = F(\bar A A_0) + F(\bar B B_0) + (1-s) F(A_0 B_0) + sF(OA_0) + sF(OB_0) + o(s) \ \ \text{as} \ \, s \to 0^-.
$$
It follows that there exists the derivative of $F(u^{(s)})$ at $s = 0$,
$$
\frac{d}{ds}\bigg\rfloor_{s=0} F(u^{(s)}) = F(OA_0) + F(OB_0) - F(A_0 B_0).
$$
According to formulas \eqref{f''positive} and \eqref{f''negative}, the derivative is nonzero, and therefore, choosing $\uu = u^{(s)}$ for $s$ in a sufficiently small one-sided (left or right) neighborhood of 0, one obtains $F(\uu) < F(u)$ .
\end{proof}

Let us now proceed to the proof of Theorem \ref{t3}. We consider separately, in subsections 2.2 and 2.3, the cases when the smallest eigenvalue of $f''(\nabla u(\check x))$ is positive and when it is negative.
%\vspace{2mm}

   %{\bf 2.2.} The smallest eigenvalue of $f''(\nabla u(\check x))$ is positive, that is, $f''(\nabla u(x))$ is positive definite for $x$ in a neighborhood of $\check x$. The argument closely follows the proof of Lemma 1 in \cite{LP3}.
\subsection{The smallest eigenvalue of $f''(\nabla u(\check x))$ is positive}
In this case $f''(\nabla u(x))$ is positive definite for $x$ in a neighborhood of $\check x$. The argument closely follows the proof of Lemma 1 in \cite{LP3}.

Without loss of generality one can assume that $f''(\nabla u(x))$ is positive definite for $x$ in $\UUU$. Denote $D_r = \{ (x,z) :\, |x - \check x| < r,\, |z - u(\check x)| < r \}$ with $r = \min \{ \ve, \text{dist}(\check x, \pl\UUU) \}$. Since the point $(\check x, u(\check x))$ is extreme, it is not contained in $\text{{\conv}}(\text{epi}(u) \setminus D_r)$. Draw a plane $z = l(x)$ strictly separating $(\check x, u(\check x))$ and $\text{{\conv}}(\text{epi}(u) \setminus D_r)$. Of course, $l(\check x) > u(\check x)$, and the function $l$ is affine, so as $\nabla l$ is constant. Let $\uu = \max \{ u,\, l \}$.

By construction, the function $\uu$ satisfies conditions (a) and (b) of Theorem \ref{t3}. It remains to check condition (c): $F(\uu) < F(u)$.

The set $\UUU_1 = \{ x : u(x) \ne l(x) \}$ is open and belongs to $\UUU$, and
$$F(u) -F(\uu) = \int_{\UUU_1} \big[ f(\nabla u(x)) - f(\nabla l) \big] dx.$$
Note that since $u = l$ on $\pl\UUU_1$, we have $\int_{\UUU_1}  (\nabla u(x) - \nabla l) dx = 0.$
For $x \in \UUU_1$ and $\nabla u(x) \ne \nabla l$,
$$
f(\nabla u(x)) > f(\nabla l) + \langle f'(\nabla l),\, \nabla u(x) - \nabla l \rangle,
$$
hence
$$
0 < \int_{\UUU_1} \big[ f(\nabla u(x)) - f(\nabla l) \big] dx - \left\langle f'(\nabla l),\
\int_{\UUU_1}  (\nabla u(x) - \nabla l) dx
\right\rangle.
$$ $$
= \int_{\UUU_1} \big[ f(\nabla u(x)) - f(\nabla l) \big] dx = F(u) -F(\uu).
$$
Thus, condition (c) is also proved.

%\vspace{2mm}

%{\bf 2.3.} The smallest eigenvalue of $f''(\nabla u(\check x))$ is negative. This case is much more difficult than the previous one.
\subsection{The smallest eigenvalue of $f''(\nabla u(\check x))$ is negative}\label{sub neg}
This case is much more difficult than the previous one.

Later on in the proof we use the notation $C = \text{epi}(u)$.

Choose orthogonal coordinates $x_1,\, x_2$ in such a way that $(1,0)$ is an eigenvector of $f''(\nabla u(\check x))$ corresponding to a negative eigenvalue. We have
$$
\frac{d^2}{dt^2}\Big\rfloor_{t=0} f(\nabla u(\check x) + (t,0)) < 0.
$$
Since $f$ is $C^2$ and $\nabla u$ is continuous in $\UUU \ni \check x$, the function $\frac{d^2}{dt^2} f(\nabla u(x) + (t,0))$ is continuous in $x$ and $t$ for $x$ sufficiently close to $\check x$ and $t$ sufficiently close to 0, hence this function is negative for $|x - \check x| < \om,\, |t| < \om$, with $\om > 0$ sufficiently small. Therefore for $|x - \check x| < \om$ the function of one variable $t \mapsto f(\nabla u(x) + (t,0))$, $t \in (-\om,\, \om)$ is strictly concave.

Choose $r$ sufficiently small, so as

$$ r < \text{dist}(\check x, \pl\UUU), \quad \text{and}
$$
   % for any two points $x,\, x'$, if $|x - \check x| < r,\, |x' - \check x| < r$,  and $\nabla u(x) - \nabla u(x')$ is parallel to the $x_1$-axis, then the restriction of $f$ on the line segment $[\nabla u(x),\, \nabla u(x')]$ is a strictly concave function of one variable. \vspace{1mm}
   \beq\label{(beta)}
   \begin{array}{l}
   \text{if $|x - \check x| < r,\, |x' - \check x| < r$,  and $\nabla u(x) - \nabla u(x')$ is parallel to the $x_1$-axis},\\
    \text{then the restriction } \text{of $f$ on the line segment $[\nabla u(x),\, \nabla u(x')]$}\\
    \text{is a strictly concave function of one variable.}
   \end{array}
\eeq

Denote $D_r = \{ (x,z) :\, |x - \check x| < r,\, |z - u(\check x)| < r \}$. Since the point $(\check x, u(\check x))$ is extreme, it is not contained in $\text{{\conv}}(C \setminus D_r)$. Draw a plane $\Pi$ strictly separating $(\check x, u(\check x))$ and $\text{{\conv}}(C \setminus D_r)$; that is, the point $(\check x, u(\check x))$ and the convex body $\text{{\conv}}(C \setminus D_r)$ are contained in different open subspaces bounded by $\Pi$ (see Fig.~\ref{fig extreme}).

The plane $\Pi$ divides the surface $\pl C$ into two parts: the upper one, $\pl_+ C$, containing $\pl C \setminus D_r$, and the lower one, $\pl_- C$, containing $(\check x, u(\check x))$. Draw all planes of support through the points of $\pl_+ C$ and denote by $C'$ the intersection of half-spaces that are bounded by these planes and contain $C$. It is seen from the construction that $\pl C \setminus \pl C'$ belongs to the graph of $u\rfloor_\UUU$, and therefore, contains only regular points of $\pl C$.

The body $C$ is contained in $C'$, but does not coincide with it. Indeed, let $\Pi'$ be the plane of support to $C'$ parallel to $\Pi$ and below $\Pi$.
Each point $Q$ in $\Pi' \cap C'$ is a singular point of $\pl C'$, since there are at least two planes of support through $Q$: $\Pi'$ and a plane tangent to $C$ at a point of $\pl C \cap \Pi$. Therefore, $Q$ does not belong to $C$.

Take a point $(x^0, u(x^0))$ in $\pl C \setminus \pl C'$. Let us show that without loss of generality one can assume that $u_{x_2}(x^0) = 0$. Indeed,
               \begin{figure}[h]
\centering
%\hspace*{6mm}
\includegraphics[scale=0.3]{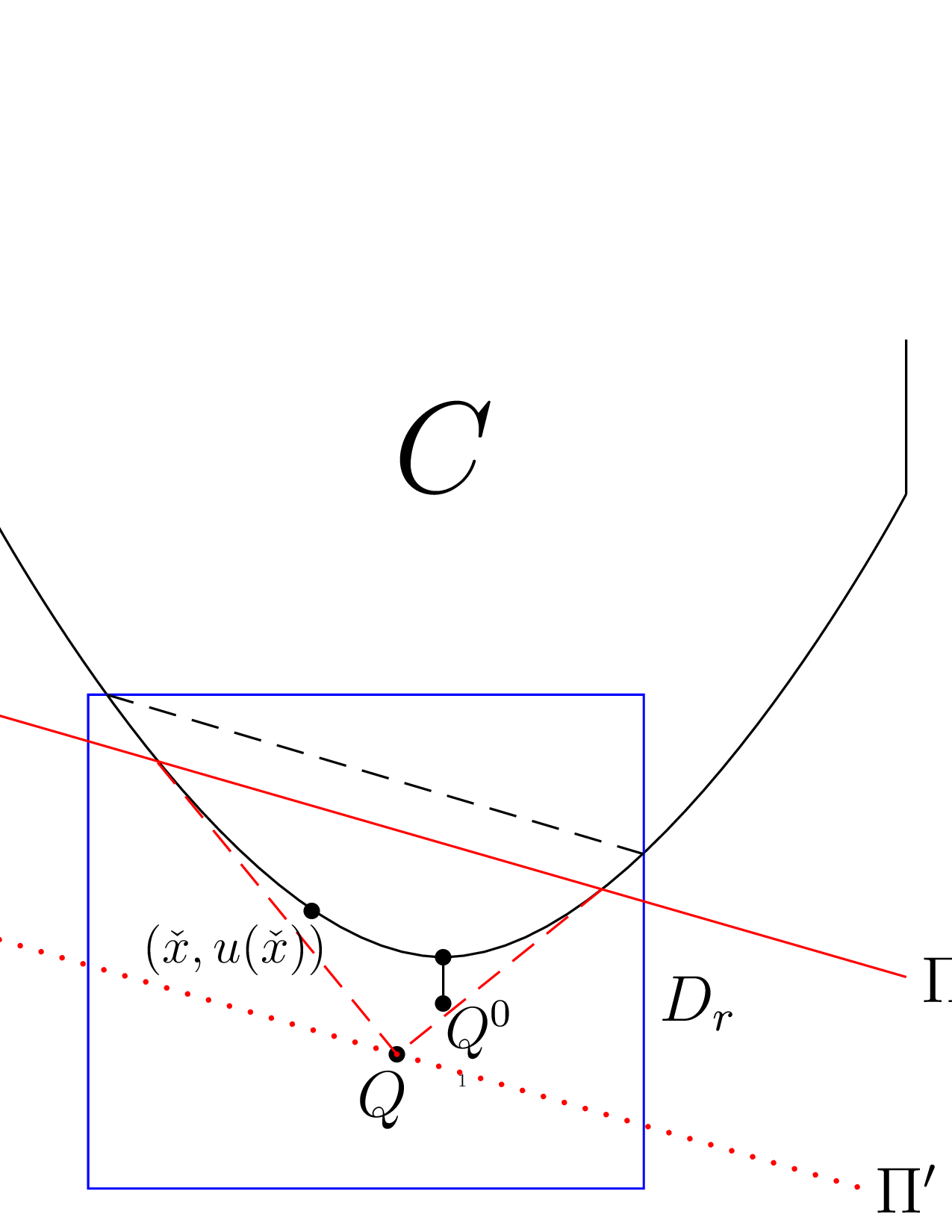}
\caption{All points of $\pl C$ situated below the plane $\Pi$ are regular. The dashed broken line through $Q$ indicates a part of $\pl C'$.}
\label{fig extreme}
\end{figure}
denote $b = u_{x_2}(x^0)$ and consider the functions $\mathbf{u}(x) = u(x) - bx_2$ and $\mathbf{f}(\xi) = f(\xi + (0,b))$. We have $\mathbf{u}_{x_2}(x^0) = 0$. One easily sees that the functions $\mathbf{u}$, $\mathbf{f}$, the set $\UUU$, and the point $\check x$ satisfy the hypotheses (i), (ii), and (iii) of Theorem \ref{t3}. (Note in passing that graph$(\mathbf{u})$ and epi$(\mathbf{u})$ are the images of graph$(u)$ and $C$, respectively, under the map $(x_1, x_2, z) \mapsto (x_1, x_2, z - bx_2)$.)

     %Later on we will use the following fact. Let $J$ be a set contained in {\conv}$(\text{epi}\big( \mathbf{u}) \cup \{ q \} \big)$, and let $\tilde C = \text{{\conv}}\big( \text{epi}(\mathbf{u}) \cup J \big)$; then $\pl(\text{epi}(\mathbf{u})) \setminus \pl\tilde C$ is contained in $\text{graph}\big( \mathbf{u}\rfloor_\UUU \big)$.

It suffices to prove the statement of Theorem \ref{t3} for $\mathbf{u}$ and $\mathbf{f}$; that is, fix $\ve > 0$ and find a function $\mathbf{\uu}$ satisfying conditions (a), (b), (c) indicated there. Then, taking $\uu(x) = \mathbf{\uu}(x) + bx_2$ and using that $\int_\Om \mathbf{f}(\nabla \mathbf{\uu}(x)) dx = \int_\Om f(\nabla\uu(x)) dx$ and $\int_\Om \mathbf{f}(\nabla {\mathbf{u}}(x)) dx = \int_\Om f(\nabla u(x)) dx$, one concludes that the statement of Theorem \ref{t3} is also true for the original functions $u$ and $f$.

In what follows we assume that $u_{x_2}(x^0) = 0$.
                          %In order to simplify the notation, later on in the proof of Theorem \ref{t3} we will write $u$ and $f$ in place of $\mathbf{u}$ and $\mathbf{f}$ and use that ${u}_{x_2}(x^0) = 0$. We will also use the notation $C = \text{epi}(\mathbf{u})$. and write $Q$ in place of the image of $Q$ under the map $(x_1, x_2, z) \mapsto (x_1, x_2, z - bx_2)$.
Consider the auxiliary function of one variable
$$
w(x_1) = \inf_{x_2} u(x_1,x_2).
$$
The function $w$ is convex, $w(x^0_1) = u(x^0)$, and the epigraph of $w$ coincides with the image of $C$ under the projection $\pi: (x_1, x_2, z) \mapsto (x_1, z)$, that is, epi$(w) = \pi(C)$.

Take a vertical interval $J$ outside $C$ with the endpoints $(x^0, u(x^0))$ and $Q^0 = (x^0, z^0)$,\ $z^0 < u(x^0)$, so as $Q^0$ lies in the interior of $C'$.
    %$\pl\big( \text{\conv}(C \cup \{ Q^0 \}) \big) \cap \pl C$ does not intersect the closure of the graph of $u\rfloor_\UUU$ or, equivalently, the closure of $\pl\big( \text{\conv}(C \cup \{ Q^0 \}) \big) \setminus \pl C$ is contained in graph$\big( u\rfloor_\UUU \big)$.
Draw two support lines through an arbitrary point of the interval $\pi(J) = \big( (x_1^0, z^0),\ (x_1^0, u(x^0)) \big)$ to the convex set $\pi(C)$ in the $(x_1,z)$-plane. Since $\pl(\pi(C))$ contains at most countably many line segments, we conclude that for all points of $\pi(J)$, except possibly for countably many ones, the intersections of both support lines through that point with $\pi(C)$ are points.

Replacing if necessary the point $(x_1^0, z^0)$ with an interior point of $\pi(J)$, without loss of generality we assume that
\beq\label{(del)}\hspace*{-3mm}
\text{the intersection of each line of support to }  \pi(C) \text{ through } (x_1^0, z^0) \text{ with } \pi(C)  \text{ is a point}
\eeq
and, additionally,
$$
u(x^0) - \ve < z^0 < u(x^0).
$$
     %Later on we will consider two planes of support to $C$ through $q''$ parallel to the $x_2$-axis. The latter condition  implies that the intersection of each such plane with $C$ is a line segment parallel to the $x_2$-axis.

Choose $\del > 0$ sufficiently small, so as the horizontal segment $I = [A, B]$ with the vertices
$$
A = (x^0, z^0) - (0, \del, 0) \qquad \text{and} \qquad B = (x^0, z^0) + (0, \del, 0)
$$
is contained in the interior of $C'$ and
$$
z^0 > u(x_1^0, x_2^0 + t) - \ve \qquad \text{for all} \quad |t| \le \del.
$$
Define
$$\tilde C = \text{{\conv}}(C \cup I)$$
and define the function $\widetilde{u}$ by the condition that $\tilde C$ is the epigraph of $\widetilde{u}$. We have $\widetilde{u} \le u$. Additionally, $\pl C \setminus \pl\tilde C \subset \pl C \setminus \pl C' \subset \text{graph}(u)\rfloor_\UUU$, and therefore,
\vspace{1mm}

(a) $\widetilde{u}\rfloor_{\Om\setminus\UUU} = u\rfloor_{\Om\setminus\UUU}$.
\vspace{1mm}

%Both $A$ and $B$ are extreme points of $\pl\tilde C$, and $I \subset \pl\tilde C$.
Each point of $\pl\tilde C$ is contained either in $\pl C$, or in a segment joining a point of $I$ with a point of $\pl C$. It follows that for any $x \in \Om$, either $\widetilde{u}(x) = u(x)$ or there exist $x^1 \in \Om$ and $x^2 = x^0 + (0,t),\, |t| \le \del$ such that the point $(x, \widetilde{u}(x))$ lies on the segment joining the points $(x^1, u(x^1))$ and $(x^2, z^0)$, that is, $x = \lam x^1 + (1-\lam) x^2$ and $\widetilde{u}(x) = \lam u(x^1) + (1-\lam) z^0$ for some $0 \le \lam < 1$.   %In the latter case we also have $x^1 \in \UUU$ and $\widetilde{u}(x^1) = u(x^1)$.
Taking into account that $u$ is convex and $z^0 > u(x^2) - \ve$, one obtains
$$
\widetilde{u}(x) > \lam u(x^1) + (1-\lam) (u(x^2) - \ve) \ge u(x) - (1-\lam) \ve.
$$
Thus, the following property is proved.
\vspace{1mm}

(b) $0 \le u - \widetilde{u} < \ve$.
\vspace{1mm}

There are two planes of support to $\pl\tilde C$ through each interior point of $I$, and this pair of planes does not depend on the choice of the point. Let them be designated as $\Pi_-$ and $\Pi_+$. The planes are of the form
\beq\label{Piplusminus}
\Pi_- :\, z - z^0 = \xi_- (x_1 - x_1^0) \quad \text{and} \quad \Pi_+ :\, z - z^0 = \xi_+ (x_1 - x_1^0),
\eeq
where $\xi_- < \xi_+$ are some real values (see Fig.~\ref{figPi}).
%%%%%%%%%%
      %\input{figPi}  % fig 12
                          \begin{figure}[h]
\centering
%\hspace*{6mm}
\includegraphics[scale=0.3]{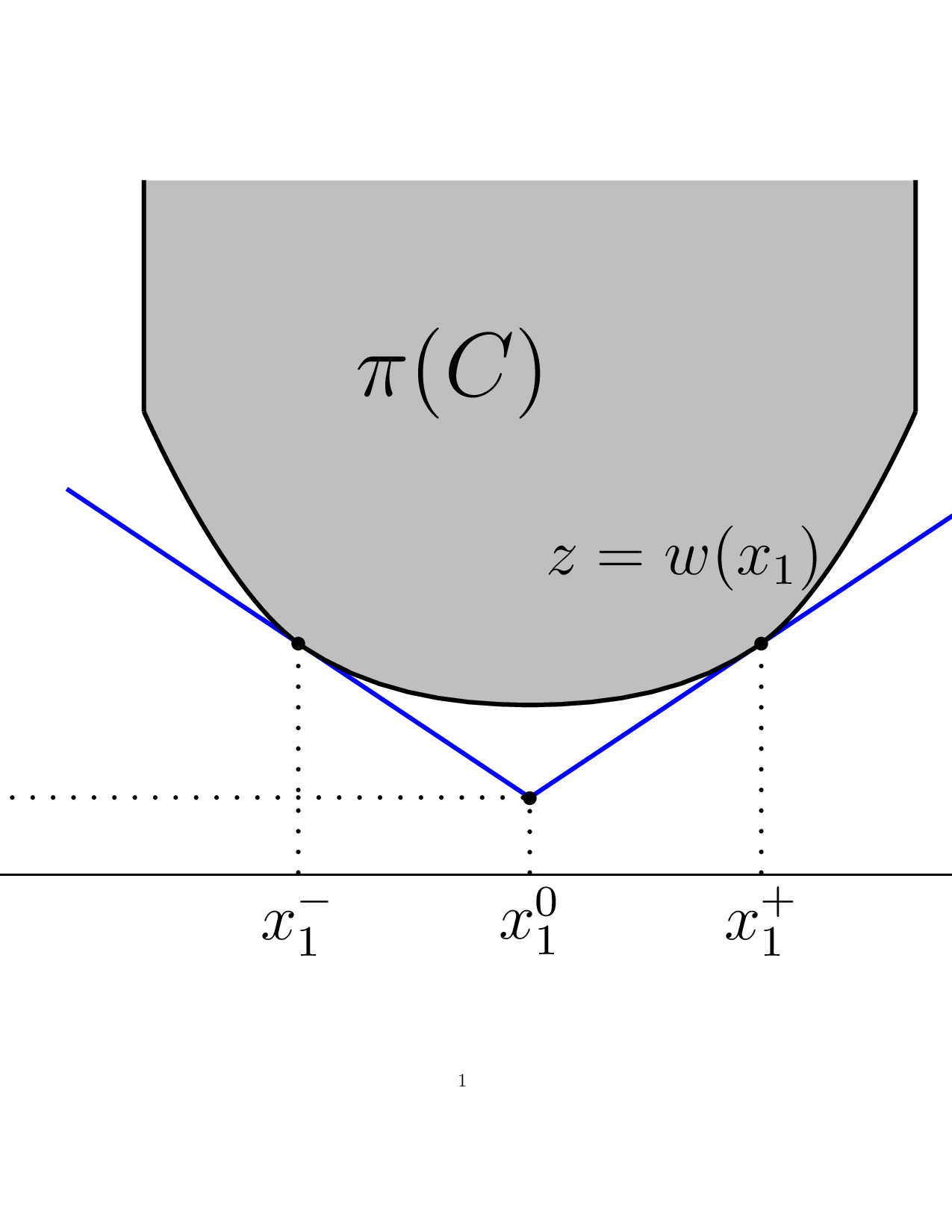}
\caption{The lines through the point $(x_1^0, z^0)$ are the orthogonal projections of $\Pi_-$ and $\Pi_+$ on the $(x_1,z)$-plane. The point $(x_1^0, z^0)$ is the projection of the segment $I$.}
\label{figPi}
\end{figure}
%%%%%%%%%%
Condition \eqref{(del)} means that the intersection of each of these planes with $C$ is a line segment parallel to the $x_2$-axis (possibly degenerating to a point). Let these segments be denoted as
$$
I_\pm = \Pi_\pm \cap C = \big( x_1^\pm,\ x_2^\pm + [-a_\pm,\, a_\pm],\ z^0 + \xi_\pm (x_1^\pm - x_1^0) \big).
$$
Correspondingly, the intersection of each plane $\Pi_\pm$ with the graph of $\widetilde{u}$ is the graph of an affine function defined on a trapezoid. For the sake of the future use we denote these functions by $z_-(x)$ and $z_+(x)$ and provide their analytic description,
\beq\label{2affine}
z_-(x) - z^0 = \xi_- (x_1 - x_1^0), \ (x_1, x_2) \in T_- \ \ \text{and} \ \ z_+(x) - z^0 = \xi_+ (x_1 - x_1^0), \ (x_1, x_2) \in T_+,
\eeq
where $T_\pm$ is the trapezoid with the sides $S_0 = \big( x_1^0,\, x_2^0 + [-\del,\, \del] \big)$ and $S_\pm = \big( x_1^\pm,\, x_2^\pm + [-a_\pm,\, a_\pm])$, with $x_1^- < x_1^0 < x_1^+$ and $a_\pm \ge 0$; see Fig.~\ref{fig trapezoid}.

                    \begin{figure}[h]
\centering
%\hspace*{6mm}
\includegraphics[scale=0.2]{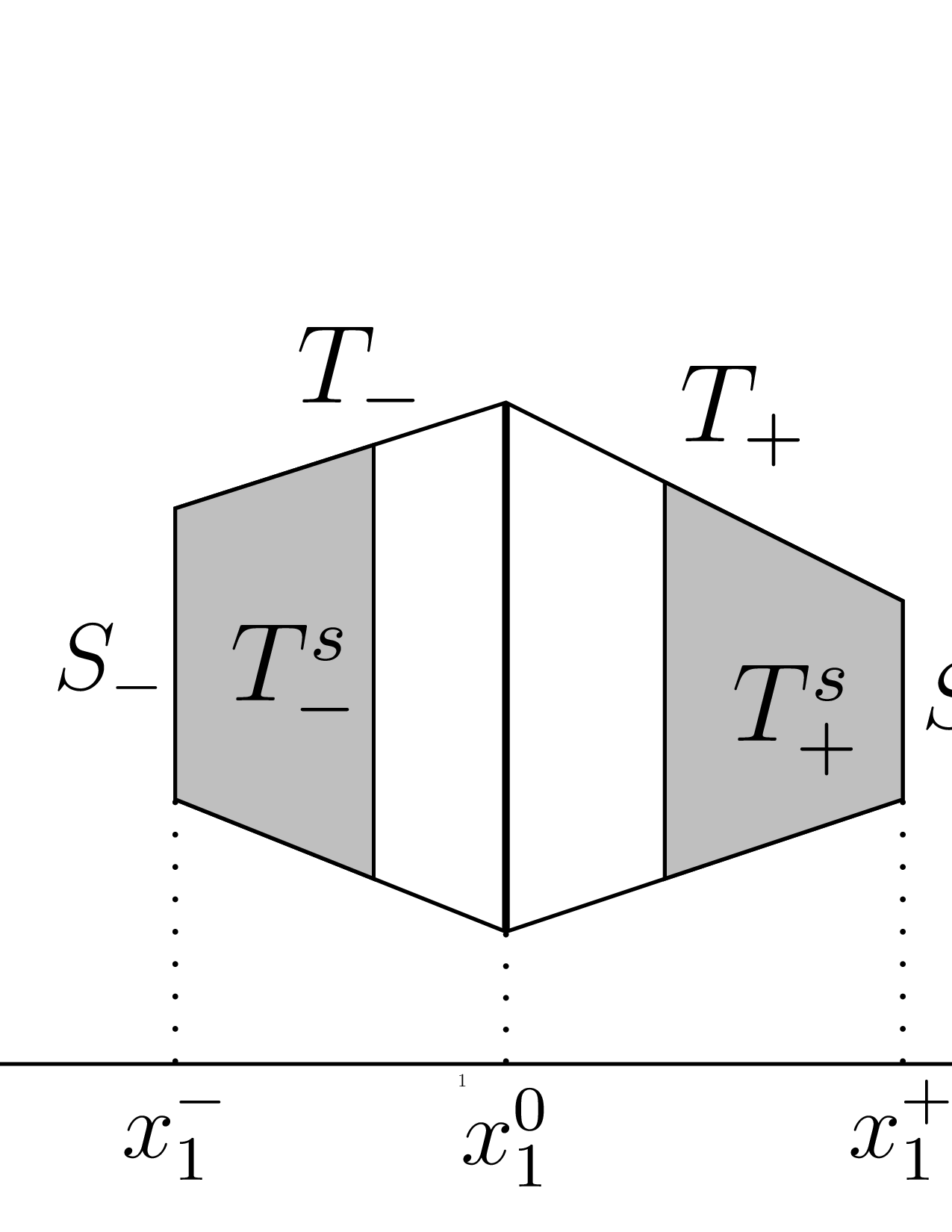}
\caption{The trapezoids $T_\pm$ and $T_\pm^s$ with $s = 0.6$.}
\label{fig trapezoid}
\end{figure}

All points of $S_\pm$ are regular and for any $x \in S_\pm$, $\nabla u(x) = (\xi_\pm, 0)$, and additionally,
$$
w'(x_1^\pm) = \xi_\pm \quad \text{and} \quad w(x_1^\pm) = z^0 + \xi_\pm (x_1^\pm - x_1^0),
$$
By condition \eqref{(beta)}, the restriction of $f$ on the line segment $[\xi_-,\, \xi_+] \times \{ 0 \}$ is strictly concave, that is,
\beq
f(\xi, 0), \ \xi \in [\xi_-,\, \xi_+] \quad \text{is a strictly concave function of one variable.}
\eeq

Using that for $x \in (x_1^-,\, x_1^+)$
$$
f(w'(x), 0)\ =\  f\Big( \frac{\xi_+ - w'(x)}{\xi_+ - \xi_-}\, \xi_- +  \frac{w'(x) - \xi_-}{\xi_+ - \xi_-}\, \xi_+, \ 0 \Big)
$$  $$
>\ \frac{\xi_+ - w'(x)}{\xi_+ - \xi_-}\, f(\xi_-, 0)\ +\ \frac{w'(x) - \xi_-}{\xi_+ - \xi_-}\, f(\xi_+, 0)
$$
and that
$$
\int_{x_1^-}^{x_1^+} w'(x)\, dx = w(x_1^+) - w(x_1^-) = \xi_+ (x_1^+ - x_1^0) - \xi_- (x_1^- - x_1^0),
$$
one obtains
\beq\label{bigineq}
\begin{split}
\int_{x_1^-}^{x_1^+} f(w'(x), 0)\, dx\ &>\ \frac{\xi_+(x_1^+ - x_1^-) - \int_{x_1^-}^{x_1^+} w'(x)\, dx}{\xi_+ - \xi_-}\ f(\xi_-, 0)\\
+\ \frac{\int_{x_1^-}^{x_1^+} w'(x)\, dx - \xi_-(x_1^+ - x_1^-)}{\xi_+ - \xi_-}\ f(\xi_+, 0) \ &=\ f(\xi_-, 0) (x_1^0 - x_1^-) + f(\xi_+, 0) (x_1^+ - x_1^0).
\end{split}
\eeq
We will need inequality \eqref{bigineq} in the end of subsection \ref{sub sg0}.
\vspace{2mm}

Define the family of convex sets $C_s,\, s \le 1$ by
$$
C_s =
\left\{
\begin{array}{ll}
(1-s)C + s\tilde C, & \text{if } 0 \le s \le 1;\\
C \cap \big[ (1-s)C + sA \big] \cap \big[ (1-s)C + sB \big], & \text{if } s < 0.
\end{array}\right.
$$
In particular, $C_0 = C$ and $C_1 = \tilde C$.

The following technical lemma will be proved in subsection \ref{sub lt2}.

\begin{lemma}\label{lt2}
$C_s$ is the epigraph of a convex function $u^{(s)}$ defined on $\Om$. There is a value $s_0 < 0$ such that for $s_0 \le s \le 1$,\, $u^{(s)}$ satisfies statements (a) and (b) of Theorem \ref{t3}.
\end{lemma}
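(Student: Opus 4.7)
The plan is to treat the cases $s \in [0,1]$ and $s \in [s_0, 0)$ separately, exploiting in each a different containment relation.

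For $0 \le s \le 1$ the key observation is the double inclusion $C \subset C_s \subset \tilde C$, which follows from $(1-s)C + sC = C$ (by convexity of $C$) together with $C \subset \tilde C$. This immediately gives convexity of $C_s$, and since both $C$ and $\tilde C$ project onto $\Om$ and contain $\Om \times [M,\infty)$, the set $C_s$ is the epigraph of a convex function $u^{(s)}$ on $\Om$ satisfying $\widetilde{u} \le u^{(s)} \le u$. Statements (a) and (b) for $u^{(s)}$ then follow at once from those already verified for $\widetilde{u}$.

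The bulk of the work concerns $s \in [s_0, 0)$. Convexity of $C_s$ and the fact that it is an epigraph over $\Om$ are routine: for each $x \in \Om$ and $z$ large, $(x,z)$ lies in all three defining sets, since the condition $(x,z) \in (1-s)C+sA$ unfolds to $u\big((x+|s|x^A)/(1+|s|)\big) \le (z+|s|z^0)/(1+|s|)$, which holds for $z$ large by convexity of $u$ and $x^A \in \Om$; the resulting $u^{(s)}$ is convex and satisfies $u^{(s)} \ge u$. To analyze (a) and (b), define the cut regions $K_A(s) = \{x \in \Om : (x,u(x)) \notin (1-s)C+sA\}$ and $K_B(s)$ analogously; outside $K_A(s) \cup K_B(s)$ one has $u^{(s)}(x) = u(x)$. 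A Taylor expansion in $|s|$ at $|s|=0$ of the defining inequality yields the first-order description $K_A(s) \approx \{x : T(x) > z^0\}$ together with $u^{(s)}(x) - u(x) \approx |s|(T(x) - z^0)$ on the cut region, where $T(x) = u(x) + \nabla u(x) \cdot (x^A - x)$ is the value at $x^A$ of the tangent plane to graph$(u)$ at $(x,u(x))$. Convexity of $u$ gives $T(x) \le u(x^A)$, with equality exactly on the contact set of the tangent plane $\Pi_0^A$ at $(x^A,u(x^A))$.

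The crucial geometric claim is that, for $\del$ small enough, this contact set lies in $\UUU$. If it did not, $\Pi_0^A$ would support $C$ at some point of $\pl_+ C$ and would therefore appear among the planes defining $C'$, placing $(x^A, u(x^A))$ on $\pl C'$; letting $\del \to 0$ and using closedness of $\pl C'$ would then force $(x^0, u(x^0)) \in \pl C'$, contradicting the choice of $x^0$. Given this, I would choose $\del$ small and $z^0$ close to $u(x^A)$ so that the compact sublevel set $\{T \ge z^0\}$ is contained in $\UUU$, with the analogous statement for $B$; a continuity/compactness argument based on the Taylor expansion then produces $s_0 < 0$ such that for $s \in [s_0, 0)$ both $K_A(s) \cup K_B(s) \subset \UUU$ (giving (a)) and the height gap $u^{(s)} - u$ is less than $\ve$ (giving (b)). The main technical obstacle will be making this uniform in $x$: one must show that the first-order expansion genuinely controls $K_A(s)$ and the height gap as $s \to 0^-$, using compactness of $\Om \setminus \UUU$ and the strict inequality $T(x) < z^0$ there, so that small-parameter perturbations do not allow the cut region or its height to escape the desired bounds.
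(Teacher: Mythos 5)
Your treatment of $0\le s\le 1$ coincides with the paper's (the inclusion $C\subset C_s\subset\tilde C$ and inheritance of (a), (b) from $\widetilde u$), so the issue is only the case $s<0$, where your plan is in the right spirit but has two genuine soft spots. First, your mechanism for localizing the cut regions only yields $T(x)<u(x^A)$ on $\Om\setminus\UUU$ (the contact-set claim says tangent planes at points outside $\UUU$ miss $(x^A,u(x^A))$), which is why you are forced to re-choose $z^0$ ``close to $u(x^A)$'' and to shrink $\del$. But $z^0$, $\del$, $A$, $B$ are already fixed by the construction preceding the lemma (subject to the genericity condition on $z^0$, the bound $u(x^0)-\ve<z^0<u(x^0)$, and $I\subset\mathrm{int}\,\conv(C\cup\{Q\})$), so as written you prove the statement for a modified family $C_s$, not for the one used in the rest of the proof of Theorem \ref{t3}; at the very least you must check your extra constraints are compatible with those choices. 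The re-choice is in fact unnecessary: the point $Q$, which your argument never uses, was introduced precisely so that every supporting plane of $C$ at a point of $\pl_+C$ --- in particular at every $(x,u(x))$ with $x\in\Om\setminus\UUU$, including singular points, where one must argue with supporting planes rather than $\nabla u(x)$ --- leaves $Q$, hence all of $\conv(C\cup\{Q\})$, on the $C$-side; since $A$ and $B$ lie in the interior of $\conv(C\cup\{Q\})$, they lie strictly on the $C$-side of each such plane, which is exactly the inequality $T(x)<z^0$ (for every subgradient) that you need for the original $z^0$.

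Second, the step asserting that the first-order expansion controls $K_A(s)$, which you yourself flag as the main obstacle, is the crux and is left unproved; moreover the linearization approximates from the wrong side: by convexity the difference quotients $t\mapsto\big(u(x+t(x^A-x))-u(x)\big)/t$ are nondecreasing in $t$, so $K_A(s)\supset\{T>z^0\}$, i.e.\ the tangent-plane description bounds the cut region from inside, whereas (a) requires an outer bound keeping $K_A(s)\cup K_B(s)$ inside $\UUU$ for all small $|s|$. What closes this is monotonicity plus compactness: the relatively open sets where the difference quotient at scale $t$ is below $z^0-u(x)$ increase, as $t\downarrow 0$, to cover the compact set $\Om\setminus\UUU$, so a single $t$ works --- which is precisely the paper's argument that for each $x\in\Om\setminus\UUU$ the segment from $(x,u(x))$ to $A$ has an initial portion of length $\ge c_1>0$ inside $C$, while the remaining part has length $\le c_2$, so $(x,u(x))\in(1-s)C+sA$ once $|s|\le c_1/c_2$. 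For (b) the paper avoids expansions altogether: since $A+(0,0,\ve)\in C$, the set $(1-s)C+sA$ contains $C+(0,0,|s|\ve)$, whence $u\le u^{(s)}\le u+|s|\ve<u+\ve$; you would do better to use this exact bound (or the exact convexity estimate $v_A(x)-u(x)\le|s|\,(u(x^A)-z^0)<|s|\ve$ for the function $v_A$ with epigraph $(1-s)C+sA$) than a first-order approximation of the height gap.
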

   %\vspace{2mm}

   %{\bf 2.3.1.}
   \subsubsection{The case $0 \le s \le 1$}\label{sub sg0}
Let us now study the values of the functional $F(u^{(s)})$ for $0 \le s \le 1$. We shall prove that $F(u^{(s)})$ is a polynomial of the $2^{\text{nd}}$ degree in $s$ and determine its coefficients.

Denote by $\Pi^n$,\, $\tilde\Pi^n$, and $\Pi^n_s$ the planes of support to $C$,\, $\tilde C$, and $C_s$, respectively, with the outward normal $n \in S^2$. In particular, $\Pi^n_0 = \Pi^n$ and $\Pi^n_1 = \tilde\Pi^n$. The following equations hold,
\beq\label{pies1}
   \Pi^n_s = (1-s) \Pi^n + s \tilde\Pi^n,   %\quad \text{and} \quad
   \eeq
   \beq\label{pies2}
\Pi^n_s \cap C_s = (1-s) (\Pi^n \cap C) + s (\tilde\Pi^n \cap \tilde C).
\eeq
We restrict ourselves to proving equation \eqref{pies2}, leaving equation \eqref{pies1} to the reader. We have
$$
\Pi^n \cap C = \{ r \in C :\, \langle r, n \rangle = \max_{\rho \in C} \langle \rho, n \rangle \}, \qquad \tilde\Pi^n \cap \tilde C = \{ r \in \tilde C :\, \langle r, n \rangle = \max_{\rho \in \tilde C} \langle \rho, n \rangle \},
$$
and since
$$
\max_{\rho \in C_s} \langle \rho, n \rangle = \max_{\rho_1 \in C,\, \rho_2 \in \tilde C} \langle (1-s)\rho_1 + s\rho_2,\, n \rangle = (1-s)\max_{\rho_1 \in C} \langle \rho_1, n \rangle + s\max_{\rho_2 \in \tilde C} \langle \rho_2, n \rangle,
$$
one comes to \eqref{pies2}:
$$
\Pi_s^n \cap C_s = \left\{ r  = (1-s) r_1 + s r_2 :\ r_1 \in C,\ r_2 \in \tilde C, \
\langle r_1, n \rangle = \max_{\rho_1 \in C} \langle \rho_1, n \rangle,\, \right. $$
$$ \left.
\langle r_2, n \rangle = \max_{\rho_2 \in \tilde C} \langle \rho_2, n \rangle \right\}
= (1-s) \big(\Pi^n \cap C\big) + s \big(\tilde\Pi^n \cap \tilde C\big).$$

For each $n$, the plane $\tilde\Pi^n$ may intersect or not intersect $\pl C$, and may intersect or not intersect $I$. If $\tilde\Pi^n$ intersects $I$, the following three cases are possible: $\tilde\Pi^n \cap I = A$,\, $\tilde\Pi^n \cap I = B$,\, $\tilde\Pi^n \cap I = I$. Additionally, $\tilde\Pi^n$ always intersects either $\pl C$ or $I$. Thus, there may be 7 cases:
\vspace{1mm}

 $\tilde\Pi^n \cap \pl C \ne \emptyset$ and $\tilde\Pi^n \cap I = \emptyset$;
\vspace{1mm}

 $\tilde\Pi^n \cap \pl C \ne \emptyset$ and $\tilde\Pi^n \cap I = A$; \qquad $\tilde\Pi^n \cap \pl C = \emptyset$ and $\tilde\Pi^n \cap I = A$;
\vspace{1mm}

 $\tilde\Pi^n \cap \pl C \ne \emptyset$ and $\tilde\Pi^n \cap I = B$; \qquad $\tilde\Pi^n \cap \pl C = \emptyset$ and $\tilde\Pi^n \cap I = B$;
\vspace{1mm}

$\tilde\Pi^n \cap \pl C \ne \emptyset$ and $\tilde\Pi^n \cap I = I$; \qquad $\tilde\Pi^n \cap \pl C = \emptyset$ and $\tilde\Pi^n \cap I = I$.
\vspace{1mm}

\hspace*{-6mm}Correspondingly, $S^2$ is the disjoint union of 7 sets,
$$
S^2 = \AAA_0 \sqcup \AAA_1^A \sqcup \AAA_1^B \sqcup \AAA_2^A \sqcup \AAA_2^B \sqcup \AAA_3 \sqcup \AAA_4,
$$
where $\AAA_0 = \{ n :\, \tilde\Pi^n \cap \pl C \ne \emptyset$ and $\tilde\Pi^n \cap I = \emptyset \}$;

$\AAA_1^A = \{ n :\, \tilde\Pi^n \cap \pl C = \emptyset$ and $\tilde\Pi^n \cap I = A \}$;

$\AAA_2^A = \{ n :\, \tilde\Pi^n \cap \pl C \ne \emptyset$ and $\tilde\Pi^n \cap I = A \}$;

$\AAA_1^B$ and $\AAA_2^B$ are defined in a similar way; \qquad
$\AAA_1 = \AAA_1^A \sqcup \AAA_1^B$; \, $\AAA_2 = \AAA_2^A \sqcup \AAA_2^B$;

$\AAA_3 = \{ n :\, \tilde\Pi^n \cap \pl C = \emptyset$ and $\tilde\Pi^n \cap I = I \}$;

$\AAA_4 = \{ n :\, \tilde\Pi^n \cap \pl C \ne \emptyset$ and $\tilde\Pi^n \cap I = I \}$.\\
The boundary of each of the bodies $C$, $\tilde C$, $C_s$ can be represented as the union,
$$
\pl C_s = \cup_{i=0}^4 \pl_i C_s \quad \text{with} \quad \pl_j C_s = \pl_j^A C_s \cup \pl_j^B C_s,  \ \ j = 1,\, 2,
$$
where
$$
\pl_i C_s = \Big( \cup_{n\in\AAA_i} \Pi^n_s \Big) \cap C_s, \ \, i = 0,\ldots,4, \quad
\pl_j^A C_s = \Big( \cup_{n\in\AAA_j^A} \Pi^n_s \Big) \cap C_s, \ \, j = 1,\, 2,
$$
and a similar representation holds for $\pl_j^B C_s$ (see Fig.~\ref{figPlC}).
%%%%%%%%%%
        %\input{figPlC}   % fig 13
                                 \begin{figure}[h]
\centering
%\hspace*{6mm}
\includegraphics[scale=0.19]{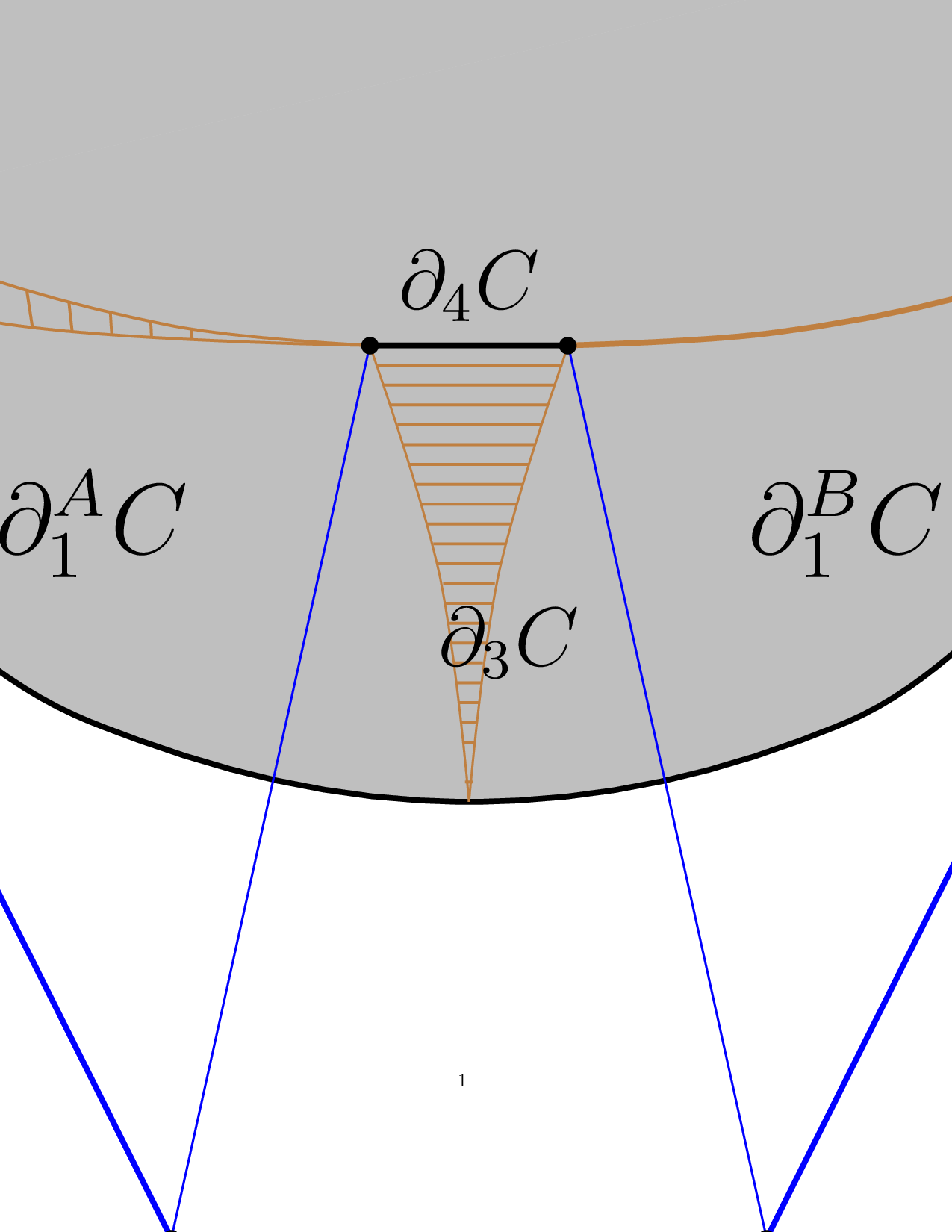}
\caption{In the figure, the sets $\pl_2^A C$ and $\pl_3 C$ are foliated by line segments, and $\pl_2^B C$ is a curve. The set $\pl_4 C$ is the union of segments $I_-$ and $I_+$; one of them is shown.}
\label{figPlC}
\end{figure}
%%%%%%%%%%
We have $\pl_i C_0 = \pl_i C$ and $\pl_i C_1 = \pl_i \tilde C,\, i = 0,\ldots,4$. It follows from equation \eqref{pies2} that
\beq\label{pliCs}
\pl_i C_s = (1-s) \pl_i C + s \pl_i \tilde C, \ i = 0,\ldots,4, \quad \pl_j^A C_s = (1-s) \pl_j^A C + s \pl_j^A \tilde C, \ j = 1,\, 2,
\eeq
and the same is true for $\pl_j^B C_s$.

\begin{propo}\label{pro}
The sets $\pl_0 C_s,\, \pl_1^A C_s,\, \pl_1^B C_s,\, \pl_2^A C_s,\, \pl_2^B C_s,\, \pl_3 C_s,\, \pl_4 C_s$ for $0 \le s < 1$ are mutually disjoint.
\end{propo}

\begin{proof}
The sets $\pl_i C$, $i \ne 0$ belong to the graph of $u\rfloor_\UUU$, and therefore, do not contain singular points. Each point point $x \in \pl_i C_s$, $0 < s < 1$ admits a decomposition $x = (1-s) x_1 + s x_2$ with $x_1 \in \pl_i C$ and $x_2 \in \pl_i \tilde C$. If $x$ is a singular point of $C_s$, then $x_1$ is a singular point of $C$ and $x_2$ is a singular point of $\tilde C$. It follows that $x_1$ does not lie in $\pl_i C$ with $i \ne 0$, and so, the sets $\pl_i C_s$, $i \ne 0$ for $0 \le s < 1$ do not contain singular points.

Let us show that different sets from the list $\pl_i^* C_s$,\, $\pl_j^* C_s$, where each of the superscripts "$*$" should be removed or substituted with $A$ or $B$, are disjoint. Assume the contrary, that is, there exists a point $x \in \pl_i^* C_s \cap \pl_j^* C_s$; then there are planes of support to $C_s$ at $x$ with the outward normals from $\AAA_i^*$ and $\AAA_j^*$, hence $x$ is a singular point of $\pl C_s$. However, at least one of the subscripts $i,\, j$ is nonzero, and therefore, at least one of the sets $\pl_i^* C_s$,\, $\pl_j^* C_s$ does not contain singular points. The obtained contradiction proves the proposition.
\end{proof}

Taking $s = 1$, one sees that some of the sets $\pl_i \tilde C$,\, $i = 0,\ldots,4$ intersect. In particular, $\pl_4 \tilde C$ is the union of graphs of two affine functions $z_\pm(x)$ given by \eqref{2affine}, and $\pl_3 \tilde C = I$ is contained in $\pl_4 \tilde C$.

In Fig.~\ref{fig section} there is shown the section of $C$ by a plane through $AB$, which does not coincide with the planes $z - z^0 = \xi_\pm (x_1 - x_1^0)$. The plane is defined either by $z - z^0 = \xi (x_1 - x_1^0)$ with $\xi < \xi_-$ or $\xi > \xi_+$, or by $x_1 - x_1^0 = 0$.

                         \begin{figure}[h]
\centering
%\hspace*{6mm}
\includegraphics[scale=0.2]{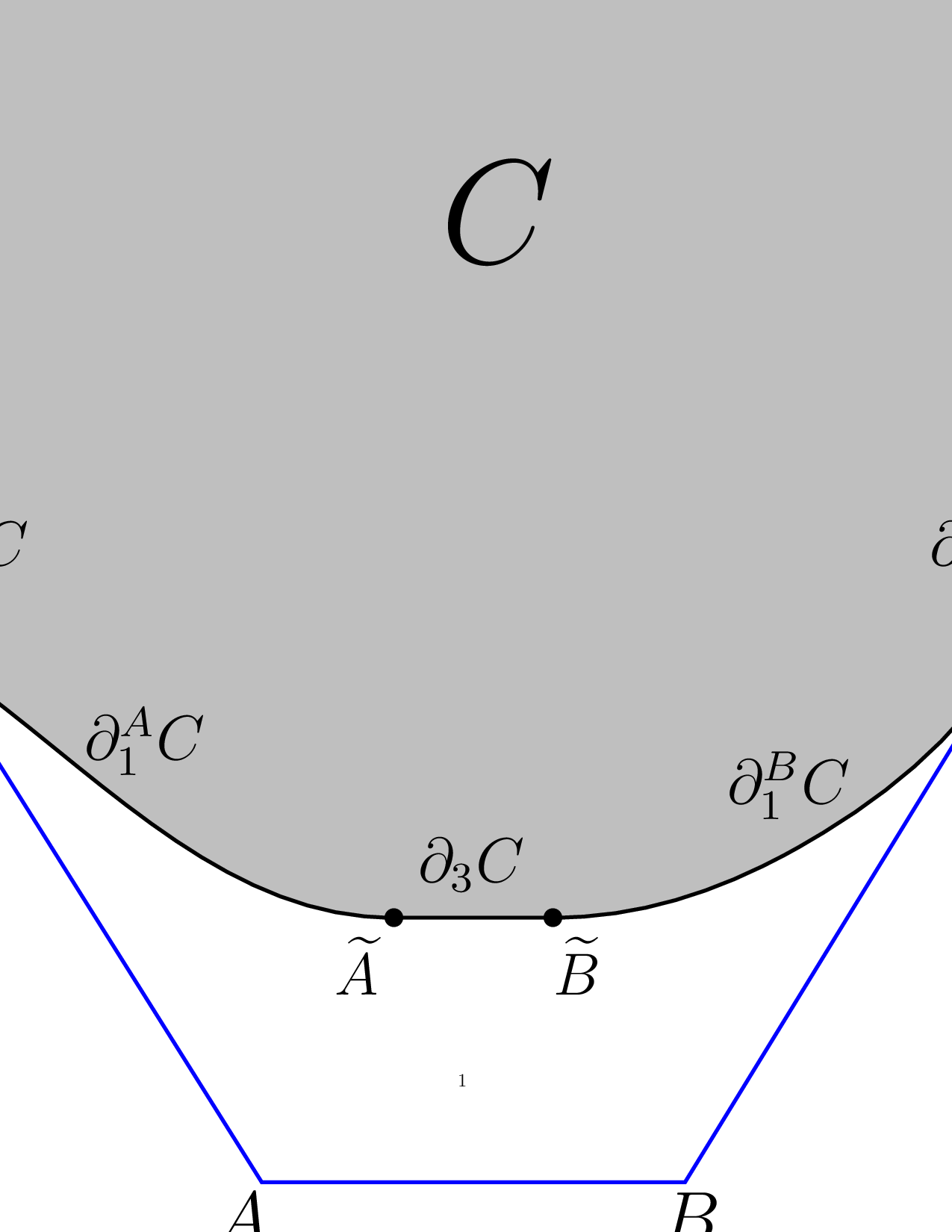}
\caption{The section of $C$ by a plane through $AB$ is shown. The section of the surface $\pl_0 C$ (not shown completely) is represented by the lines $A' \widehat{A}$ and $B'' \widehat{B}$. The sections of the surfaces $\pl_1^A C$ and $\pl_1^B C$ are the lines $A' \widetilde{A}$ and $B' \widetilde{B}$, respectively. The sections of $\pl_2^A C$ and $\pl_2^B C$ are, respectively, the point $A'$ and the closed line segment $B' B''$. The section of the surface $\pl_3 C$ is the closed line segment $\widetilde{A} \widetilde{B}$. The intersection of the plane with $\pl_4 C$ is empty.}
\label{fig section}
\end{figure}

In Fig.~\ref{fig section k<1}, the section of $C_s$ by the same plane is shown.

                        \begin{figure}[h]
\centering
%\hspace*{6mm}
\includegraphics[scale=0.2]{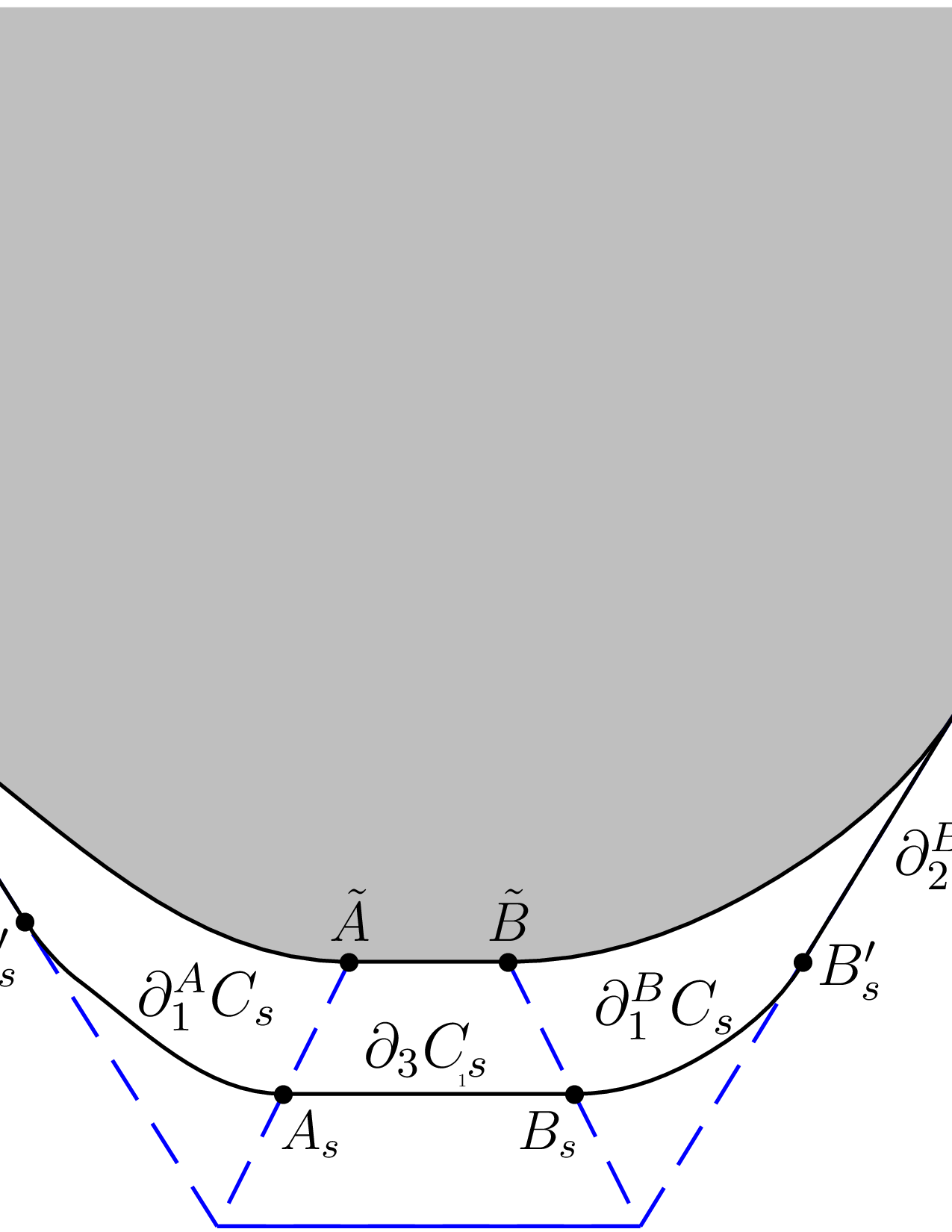}
\caption{The section of $C_s$ by a plane is shown, $s = 1/2$. The section of the surface $\pl_0 C_s$ coincides with the section of $\pl_0 C$. The closed line segments $A' {A}'_s$ and $B'' {B}'_s$ are, respectively, the sections of $\pl_2^A C_s$ and $\pl_2^B C_s$. The two curves and a closed line segment forming together the arc ${A}'_s {B}'_s$ represent the sections of $\pl_1^A C_s$,\, $\pl_1^B C_s$, and $\pl_3 C_s$.}
\label{fig section k<1}
\end{figure}

The section of $C_s$ by the plane $z - z^0 = \xi_+ (x_1 - x_1^0)$ is shown in Fig.~\ref{fig touch}. (The section by the plane $z - z^0 = \xi_- (x_1 - x_1^0)$ looks similar to it.) The plane is tangent to both sets $C$ and $C_s$. The intersection of the plane with $C$ is a line segment, and the intersection with $C_s$ is a trapezoid (they may degenerate to a point and a triangle, respectively).
                             \begin{figure}[h]
\centering
%\hspace*{6mm}
\includegraphics[scale=0.11]{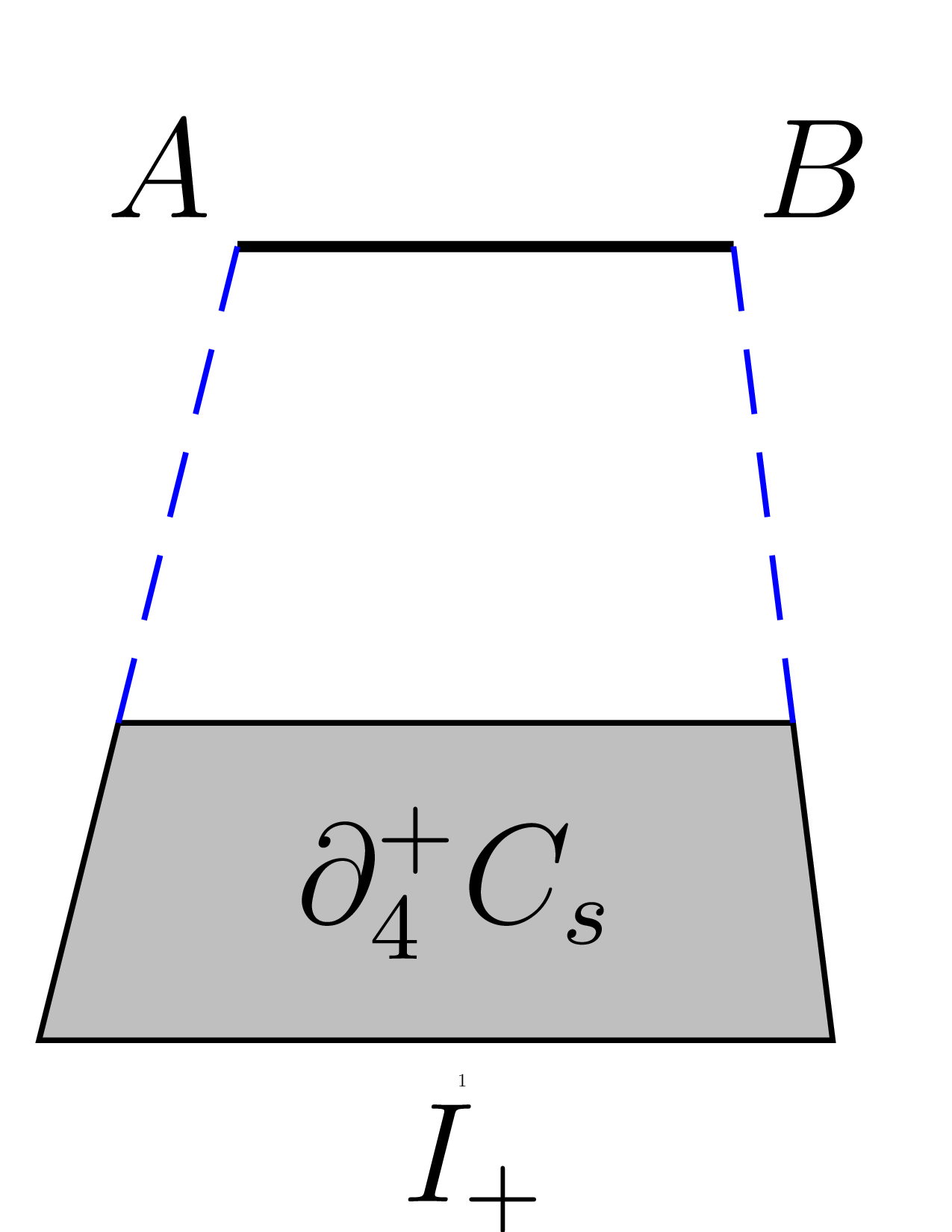}
\caption{The section of $C_s$ with $0 < s < 1$ by the plane $z - z^0 = \xi_+ (x_1 - x_1^0)$ is shown. The intersection of the plane with $C$ is the segment $I_+ = \pl_4^+ C$, and the intersection with $C_s$ is the trapezoid $\pl_4^+ C_s$. }
\label{fig touch}
\end{figure}

Later on we will consider intersections of the sets $C_s$ with the planes through $AB$. Having this in mind, denote by $\Pi[x_1]$ the half-plane bounded by line $AB$ and containing the line $(x_1, \RRR, w(x_1))$,\, $x_1 \in [x_1^-,\, x_1^+]$. In particular, the half-planes $\Pi[x_1^\pm]$ are contained, correspondingly, in the planes $\Pi_\pm$ defined by \eqref{Piplusminus}.

We are going to determine the resistance (the value of the functional $F$) produced by each of the sets $\pl_i C_s$,\, $0 \le s < 1$,\, $i = 0,\ldots,4$. Let us consider them separately.
\vspace{2mm}

{\bf [0]} If $n \in \AAA_0$ then $\tilde\Pi^n = \Pi^n = \Pi^n_s$ for all $0 \le s \le 1$.
 The sets $\pl_0 C_s$ for all $0 \le s \le 1$ coincide with $\pl_0 C$. The corresponding resistance does not depend on $s$,
\beq\label{pl_0}
F(\pl_0 C_s) = F(\pl_0 C) = a_0.
\eeq
\vspace{1mm}

{\bf [1]} If $n \in \AAA_1^A$ then $\tilde\Pi^n \cap \tilde C = A$, and if $n \in \AAA_1^B$ then $\tilde\Pi^n \cap \tilde C = B$, hence $\pl_1^A \tilde C = A$ and $\pl_1^B \tilde C = B$. By Proposition \ref{pro}, the corresponding sets $\pl_1^A C_s$ and $\pl_1^B C_s$ are disjoint, and by \eqref{pliCs},
$$
\pl_1^A C_s = (1-s) \pl_1^A C + s A, \qquad \pl_1^B C_s = (1-s) \pl_1^B C + s B.
$$
Thus, $\pl_1^A C_s$ and $\pl_1^B C_s$ are homothetic, respectively, to $\pl_1^A C$ and $\pl_1^B C$ with the ratio $1-s$ and with the centers at $A$ and $B$, and therefore, $F(\pl_1^A C_s) = (1-s)^2 F(\pl_1^A C)$ and $F(\pl_1^B C_s) = (1-s)^2 F(\pl_1^B C)$. Denoting $a_1 = 2F(\pl_1 C) = 2(F(\pl_1^A C) + F(\pl_1^B C))$, we obtain
\beq\label{pl_1}
F(\pl_1 C_s) = F(\pl_1^A C_s) + F(\pl_1^B C_s) = \frac{a_1}{2} (1-s)^2.
\eeq
\vspace{1mm}

{\bf [2]} Now consider $\AAA_2 = \AAA_2^{A} \cup \AAA_2^{B}$.
       %The corresponding sets $\pl_2^A \tilde C$ and $\pl_2^B \tilde C$ are disjoint. Indeed, assume the contrary and take a point $P$ in $\pl_2^A \tilde C \cap \pl_2^B \tilde C$. Then there are two vectors $n_1 \in \AAA_2^{A}$ and $n_2 \in \AAA_2^{B}$ such that the plane $\tilde\Pi^{n_1}$ contains $P$ and $A$ and does not contain $B$, and the plane $\tilde\Pi^{n_2}$ contains $P$ and $B$ and does not contain $A$. It follows that $P$ is a singular point of $\pl\tilde C$.
       %
       %The point $P$ can be represented as a convex combination of three points $A,\, B$, and $P' \in \pl C$. The point $P'$ is a regular point of $\pl C$, hence $P \ne P'$. The point $P$ cannot lie on the side $AP'$, since otherwise the plane of support through $P$ and $B$ contains $A$. The same is true for the side $BP'$. Of course, $P$ does not lie on the side $AB$. Finally, all interior points of the triangle $ABP'$ are regular points of $\pl\tilde C$, and therefore, cannot coincide with $P$. We come to a contradiction.
The set $\pl_2^A \tilde C$ is the union of line segments with one endpoint at $A$. Each crossing half-plane $\Pi[x_1]$,\, $x_1 \in (x_1^-,\, x_1^+)$ contains such a segment; let it be designated as $[A,\, A''(x_1)]$. It is the union $[A,\, A''(x_1)] = [A,\, A'(x_1)) \cup [A'(x_1),\, A''(x_1)]$, where $[A,\, A'(x_1)) \cap C = \emptyset$ and $[A'(x_1),\, A''(x_1)] \subset C$. The set $\pl_2^A C$ is the union of the corresponding segments $[A''(x_1),\, A'(x_1)]$,\, $\pl_2^A \tilde C$ is the union of segments $[A''(x_1),\, A]$, and $\pl_2^A C_s$ is the union of segments $[A''(x_1),\, A_s(x_1)]$, where $A_s(x_1) = (1-s) A'(x_1) + s A$.

Note that $(A_s(x_1),\, A] = (1-s) (A'(x_1),\, A] + s A$, hence the segment $[A''(x_1),\, A_s(x_1)]$ is the set-theoretic difference
$$
[A''(x_1),\, A_s(x_1)] = [A''(x_1),\, A] \setminus \big( (1-s) (A'(x_1),\, A] + s A \big) \big].
$$

%In a similar way introduce the points $B'(x_1)$,\, $B''(x_1)$, and $B_s(x_1)$.

Thus, $\pl_2^A C_s$ is the set-theoretic difference of of $\pl_2^A \tilde C$ and the set homothetic to $\pl_2^A \tilde C \setminus \pl_2^A C$ with ratio $1-s$ and the center $A$,
$$
\pl_2^A C_s = \pl_2^A \tilde C \setminus \big( (1-s) (\pl_2^A \tilde C \setminus \pl_2^A C) + s A \big).
$$
Hence
$$
F(\pl_2^A C_s) = F(\pl_2^A \tilde C) - (1-s)^2 (F(\pl_2^A \tilde C) - F(\pl_2^A C)),
$$
and a similar relation holds for $F(\pl_2^B C_s)$.
By Proposition \ref{pro}, the sets $\pl_2^A C_s$ and $\pl_2^B C_s$ are disjoint.
Thus, denoting
$${a_2} = 2( F(\pl_2^A \tilde C) - F(\pl_2^A C) ) + 2( F(\pl_2^B \tilde C) - F(\pl_2^B C) ) = 2( F(\pl_2 \tilde C) - F(\pl_2 C) )$$
 and
 $$b_2 = F(\pl_2^A \tilde C) + F(\pl_2^B \tilde C) = F(\pl_2 \tilde C),$$
  we obtain
\beq\label{pl_2}
F(\pl_2 C_s) = b_2 - \frac{a_2}{2} (1-s)^2.
\eeq
\vspace{1mm}

{\bf [3]} We have $\pl_3 \tilde C = I$ and $\AAA_3 = \{ n : \tilde\Pi^n \cap \tilde C = I \}$. The set $\AAA_3$ is the smaller arc of the great circle in the plane $n_2 = 0$ with the endpoints $n_\pm = (\xi_\pm, 0, -1)/\sqrt{1 + \xi_\pm^2}$.
     %More precisely, the arc is the set $\{ (t, 0, -1)/\sqrt{1 + t^2} :\, t \in [\xi_-,\, \xi_+] \} \subset S^2$.
The image of the set $\pl_3 C = \big( \cup_{n \in \AAA_3} \Pi^n \big) \cap C$ under the map $\pi : (x_1, x_2, z) \mapsto (x_1, z)$ is the graph of  $w\rfloor_{(x_1^-,\, x_1^+)}$; recall that $w(x_1) = \inf_{x_2} u(x_1,x_2)$ and $w'(x_1^\pm) = \xi_\pm$.

The intersection of the set $\pl_3 C$ with each half-plane $\Pi[x_1]$ is a line segment parallel to $I$ (maybe degenerating to a point); let it be the segment $[A(x_1),\, B(x_1)]$ co-directional with $[A,\, B]$. (In Fig.~\ref{fig section} this segment is designated as $[\widetilde{A},\, \widetilde{B}]$.) Correspondingly, the set $\pl_3 C$ is the union of these segments,
$$
\pl_3 C = \cup_{x_1 \in (x_1^-,\, x_1^+)} [A(x_1),\, B(x_1)],
$$
and $\pl_s C_s$ is
$$
\pl_3 C_s = (1-s) \pl_3 C + s I = \cup_{x_1 \in (x_1^-,\, x_1^+)} [ A_s(x_1),\, B_s(x_1) ],
$$
where $A_s(x_1) = (1-s) A(x_1) + s A$ and $B_s(x_1) = (1-s) B(x_1) + s B$. (In particular, $A_0(x_1) = A(x_1)$ and $B_0(x_1) = B(x_1)$.)

Denote by $L(x_1)$ the length of $ [A(x_1),\, B(x_1)]$; then the length of $[ A_s(x_1),\, B_s(x_1) ]$ is $(1-s) L(x_1) + s |I|$. Recall that $|I| = 2\del$.

The image of $\pl_3 C_s$ under the map $\pi : (x_1, x_2, z) \mapsto (x_1, z)$ is $(1-s)$\,graph$\big( w\rfloor_{(x_1^-,\, x_1^+)} \big) + s (x_1^0, z^0)$; that is, it is the homothety of $\text{graph}\big( w\rfloor_{(x_1^-,\, x_1^+)} \big)$ with the center $(x_1^0, z^0)$ and ratio $1-s$, and is composed of points $\big( (1-s)x_1 + sx_1^0,\ (1-s)w(x_1) + sz^0 \big)$,\, $x_1 \in (x_1^-,\, x_1^+)$.
The gradient of the function $u^{(s)}$ at a preimage of such a point is $(w'(x_1), 0)$, hence
$$
F(\pl_3 C_s) = \int_{(1-s)x_1^- +sx_1^0}^{(1-s)x_1^+ +sx_1^0} f\Big(w'\Big(\frac{t-sx_1^0}{1-s}\Big), 0\Big)\, \Big[(1-s) L\Big( \frac{t-sx_1^0}{1-s} \Big) + s |I| \Big]\, dt $$
\beq\label{pl_3}
= (1-s) \int_{x_1^-}^{x_1^+} f(w'(x), 0)\, \big[ (1-s) L(x) + 2s\del \big]\, dx = a_3 s(1-s) + \frac{b_3}{2}\, (1-s)^2,
\eeq
\beq\label{a3}
\text{where} \ \, b_3 = 2\int_{x_1^-}^{x_1^+} f(w'(x), 0) L(x)\, dx \ \, \text{and} \ \, a_3 = 2\del \int_{x_1^-}^{x_1^+} f(w'(x), 0)\, dx.
\eeq
\vspace{2mm}

{\bf [4]} $\AAA_4$ is the set of two vectors, $\AAA_4 = \{ n_-, n_+ \}$; recall that $n_\pm = (\xi_\pm, 0, -1)/\sqrt{1 + \xi_\pm^2}$.

The sets $\tilde\Pi^{n_-} \cap \tilde C = \pl_4^- \tilde C$ and $\tilde\Pi^{n_+} \cap \tilde C = \pl_4^+ \tilde C$ are graphs of the affine functions $z_\pm(x)$ defined by \eqref{2affine}, and $\pl_4^- \tilde C \cup \pl_4^+ \tilde C = \pl_4 \tilde C$. The planes $\Pi^{n_\pm} = \tilde\Pi^{n_\pm}$ coincide with the panes $\Pi_\pm$ defined by \eqref{Piplusminus}, and $\pl_4 C = I_- \cup I_+$.

Further, $\pl_4 C_s = \pl_4^- C_{s} \cup \pl_4^+ C_{s}$, where each set $\pl_4^\pm C_{s}$ is the graph of the restriction of the function $z = z_\pm(x)$ on the trapezoid $T_\pm^s$ with a base $(x_1^\pm,\ x_2^\pm + [-a_\pm, a_\pm])$ and the lateral sides contained in the lateral sides of $T_\pm$, and with the height $s |x_1^0 - x_1^\pm|$; see Fig.~\ref{fig trapezoid}. The length of the other base is $2(1-s)a_\pm + 2s\del$. The slope of the planar set $\pl_4^\pm C_{s}$ is $\xi_\pm$, and the area of its projection on the $x$-plane equals
 $$
 \Del_4^\pm = s |x_1^0 - x_1^\pm| \big[ (2-s) a_\pm + s \del \big] = (1 - (1-s)^2) a_\pm |x_1^0 - x_1^\pm| + s^2 \del |x_1^0 - x_1^\pm|.
 $$
It follows that
\beq\label{pl_4}
F(\pl_4 C_s) = F(\pl_4^- C_{s}) +F(\pl_4^+ C_{s}) = \frac{a_4}{2} s^2 + b_4(1 - (1-s)^2),
\eeq
 where
\beq\label{a4}
a_4 = 2\del \big( f(\xi_-, 0) (x_1^0 - x_1^-) + f(\xi_+, 0) (x_1^+ - x_1^0) \big), \ \ b_4 = f(\xi_-, 0) (x_1^0 - x_1^-) a_- + f(\xi_+, 0) (x_1^+ - x_1^0) a_+.
\eeq
\vspace{2mm}

As a result of calculation in {\bf [0]}, {\bf [1]}, {\bf [2]}, {\bf [3]}, {\bf [4]} we conclude that the resistance of $u^{(s)}$ for $0 \le s < 1$ is a quadratic function in $s$,
$$
F(u^{(s)}) = F(\pl_0 C_s) + F(\pl_1 C_s) + F(\pl_2 C) + F(\pl_3 C_s) + F(\pl_4 C_s)
$$
 \beq\label{F kless1}
 = c_0 + \frac{c_1}{2} (1-s)^2 + a_3 s(1-s) + \frac{a_4}{2} s^2,
\eeq
where $c_0 = a_0 + b_2 + b_4$ and $c_1 = a_1 - a_2 + b_3 - 2b_4$. Hence the first and the second derivatives of $F(u^{(s)})$ for $0 < s < 1$ are equal to
$$
\frac{d}{ds} F(u^{(s)}) = c_1 (s - 1) + a_3(1 - 2s) + a_4 s, \qquad \frac{d^2}{ds^2} F(u^{(s)}) = c_1 - 2a_3 + a_4,
$$
and the right derivative at $s=0$ is
$$
\frac{d}{ds}\bigg\rfloor_{s=0^+} F(u^{(s)}) = a_3 - c_1.
$$

The following lemma serves to determine the left derivative of $F(u^{(s)})$ at $s=0$.

\begin{lemma}\label{ls<0}
$$
F(u^{(s)}) =  c_0 + \frac{c_1}{2} + a_3 s - c_1 s + o(s) \quad \text{\rm as} \ s \to 0^-.
$$
\end{lemma}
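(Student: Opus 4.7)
The plan is to mirror the computation of $F(u^{(s)})$ for $0 \le s < 1$ that immediately precedes the lemma. For $s<0$ with $|s|$ small, $C_s = C \cap D_s^A \cap D_s^B$ (where $D_s^{A/B} := (1-s)C + sA/B$ are homothetic expansions of $C$ from $A$, $B$ by ratio $1-s > 1$) is obtained from $C$ by cutting two disjoint caps near the shadows of $A$ and $B$. I would partition $\pl C_s$ using the same normal-class decomposition $\AAA_0, \AAA_1^{A/B}, \AAA_2^{A/B}, \AAA_3, \AAA_4$ from the $s \ge 0$ analysis and show that each $F(\pl_i C_s)$ agrees with the corresponding term in \eqref{F kless1} up to $o(s)$.

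For region 0 the tangent planes avoid $A$ and $B$, so for $|s|$ small the caps do not touch $\pl_0 C$; hence $F(\pl_0 C_s) = a_0$. For regions 1 and 2, the new surface pieces replacing the cut-off portions of $\pl_1^{A/B} C$ and the outer parts of $\pl_2^{A/B} C$ lie on $\pl D_s^A \cup \pl D_s^B$. Because $\pl D_s^A = (1-s)\pl C + sA$ is exactly the homothety of $\pl C$ from $A$ by ratio $1-s$, the normal classification transports under the homothety, and the new pieces are the exact homothetic images of the corresponding faces of $\pl C$ up to a cap-boundary correction of area $o(s)$; the $o(s)$ bound uses the $C^1$ regularity of $u$ on $\UUU$ to control the width of the region where the homothetic image first enters $C$. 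This yields $F(\pl_1 C_s) = \tfrac{a_1}{2}(1-s)^2 + o(s)$ and $F(\pl_2 C_s) = b_2 - \tfrac{a_2}{2}(1-s)^2 + o(s)$. For region 3 the new surface consists of chords parallel to $I$ joining $\pl D_s^A$ and $\pl D_s^B$; the parametrization by $x_1 \in [x_1^-, x_1^+]$ via the auxiliary $w(x_1)$ and chord lengths $L(x_1)$ used for $s \ge 0$ applies, and the integral \eqref{a3} extends to $s < 0$ up to $o(s)$, yielding $F(\pl_3 C_s) = a_3 s(1-s) + \tfrac{b_3}{2}(1-s)^2 + o(s)$.

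The delicate part is region 4. A direct calculation shows that for $s < 0$ every point $P \in I_\pm$ lies outside both $D_s^A$ and $D_s^B$: the open segments $(A,P)$ and $(B,P)$ are entirely outside $C$, since they lie in the tangent plane $\Pi^{n_\pm}$ whose intersection with $C$ is only $I_\pm$, which is located at $x_1 = x_1^\pm$ while the segments pass through values $x_1 \in [x_1^0, x_1^\pm)$. Thus $I_\pm \cap C_s = \emptyset$, and the tangent plane to $C_s$ with normal $n_\pm$ is a parallel shift of $\Pi^{n_\pm}$ into the interior of $C$. I would identify the shifted face explicitly as the cross-section of $C$ truncated by $D_s^A \cap D_s^B$ and compute that $F(\pl_4 C_s) = 2b_4 s + o(s)$, matching the first-order Taylor expansion in $s$ of the $s \ge 0$ formula $\tfrac{a_4}{2}s^2 + b_4(1-(1-s)^2)$.

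Summing the five contributions and using $c_0 = a_0 + b_2 + b_4$ and $c_1 = a_1 - a_2 + b_3 - 2b_4$ together with the expansions $(1-s)^2 = 1 - 2s + o(s)$, $s(1-s) = s + o(s)$, $s^2 = o(s)$, and $1 - (1-s)^2 = 2s + o(s)$ gives
$$F(u^{(s)}) = c_0 + \tfrac{c_1}{2} + (a_3 - c_1)\,s + o(s), \qquad s \to 0^-,$$
which is the claim. The main obstacle will be region 4: locating the shifted tangent face with normal $n_\pm$ precisely enough to extract the correct $2b_4 s$ leading term; the other estimates reduce to first-order Taylor expansions that exploit the smoothness (in fact linearity) of the one-parameter homothety families $D_s^{A/B}$.
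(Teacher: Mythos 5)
Your five regionwise formulas do sum formally to the stated expansion, but the core step of your plan --- that for $s<0$ each $F(\pl_i C_s)$ continues the corresponding term of \eqref{F kless1} up to $o(s)$ --- is not true piece by piece, and this is precisely the difficulty the paper's proof is built to circumvent. The identity \eqref{pies2}, on which the normal-class computation rests, holds only for the Minkowski family $0\le s\le 1$; for $s<0$ the body $C_s$ is an intersection, and the paper accordingly abandons the $\AAA_i$-decomposition and instead splits $\pl C_s$ into $G_1^s, G_2^s, G_3^s$ according to which of the surfaces $\pl C$, $(1-s)\pl C+sA$, $(1-s)\pl C+sB$ carries the point (Proposition \ref{propo3}), applies inclusion--exclusion, and introduces the wedges $\VVV_{\als}$ (Proposition \ref{propo4}) to isolate the first-order effects near $I_\pm$ (Propositions \ref{propo5.0}--\ref{propo5.1}). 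The failure is concrete in your ``delicate'' region 4: you correctly note $I_\pm\cap C_s=\emptyset$ for $s<0$, but then the support plane of $C_s$ with normal $n_\pm$ meets $C_s$ in a face of dimension at most one (none of the three surfaces contains a two-dimensional planar piece with normal $n_\pm$), so $F(\pl_4 C_s)=0$; it cannot equal $2b_4 s+o(s)$, which is negative (and no boundary subset has negative resistance when $f>0$, as in Newton's case). The $2b_4 s$ term is not the resistance of any face: it is a net first-order deficit spread over the parts of $\pl C_s$ whose normals are merely \emph{close} to $n_\pm$, and extracting it is exactly where the flatness of $u$ along $I_\pm$ hurts --- the region where the graph lies within first order of the planes $\Pi_\pm$ has width much larger than $|s|$, which is why Proposition \ref{propo5.1} compares the projected areas of $\pl C_s\setminus\VVV_{\als}$ and $\pl C\setminus\VVV_{\als}$ rather than tracking a shifted tangent face.

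The other regions also do not behave as you assert. The caps cut by the homotheties from $A$ and $B$ do in general intrude into $\pl_0 C$: by the criterion \eqref{condition}, a point $Q\in\del_0 C$ is removed as soon as the chord cut on the ray $AQ$ is shorter than $|s|\,|AP|$, and near the places where the horizon chords $[A'(x_1),A''(x_1)]$ of $\pl_2^A C$ degenerate such points of $\pl_0 C$ exist for any fixed $s<0$; so ``$F(\pl_0 C_s)=a_0$'' is unjustified, and one has to prove this excess is $o(s)$ --- the paper does so by treating $\del_0 C=\pl_0 C\cup\pl_2 C\cup\pl_4 C$ as a single unit in Proposition \ref{propo5}. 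Likewise, for $s<0$ the middle portion of $\pl C_s$ is not a union of chords parallel to $I$ of length $(1-s)L(x_1)+s|I|$; it is the overlap of the two homothetic copies $(1-s)\del^A C+sA$ and $(1-s)\del^B C+sB$ cut by each other and by $C$, and the $-a_3 s$ contribution arises from the inclusion--exclusion terms estimated via the sets \eqref{1st set} and \eqref{2nd set}. In short, your totals match only because the errors in the piecewise attributions cancel; the proposal does not contain the argument that actually yields the expansion, namely the intersection-based decomposition of $\pl C_s$, the inclusion--exclusion identities, and the wedge construction $\VVV_{\als}$.
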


The crucial property used in the proof of this lemma is that all planes of support to $C$ through $A$ and $B$ are tangent to $C$, that is, all points in the intersection of $C$ with these planes are regular.

Before proving this lemma, let us finish the proof of Theorem \ref{t3}.

It follow from Lemma \ref{ls<0} that the derivative of $F(u^{(s)})$ at $s = 0$ exists, and
\beq\label{derivative k=1}
 \frac{d}{ds} F(u^{(s)})\Big\rfloor_{s=0} = a_3 - c_1.
\eeq

Now consider two cases. If $\frac{d}{ds} F(u^{(s)})\rfloor_{s=0} \ne 0$ then for $s$ sufficiently small (either positive or negative), the function $u_1 = u^{(s)}$ satisfies statement (c) of Theorem \ref{t3}. Statements (a) and (b) are guaranteed by Lemma \ref{lt2}, and so, Theorem \ref{t3} is proved.

If $\frac{d}{ds} F(u^{(s)})\rfloor_{s=0} = 0$ then by \eqref{derivative k=1} we have $a_3 = c_1$. From inequality \eqref{bigineq} and the definition of $a_3$ and $a_4$ in \eqref{a3} and \eqref{a4} one obtains $a_3 > a_4$, and therefore,
$$
\frac{d^2}{ds^2}\bigg\rfloor_{s=0^+} F(u^{(s)}) = - a_3 + a_4 < 0.
$$
It follows that for $s>0$ sufficiently small, $u_1 = u^{(s)}$ satisfies statement (c) of Theorem \ref{t3}.

Theorem \ref{t3} is completely proved.

    \subsubsection{The case $s < 1$}

It remains to prove Lemma \ref{ls<0}. The proof is quite difficult and involves the study of properties of the surface $C_s$ for $s < 0$.
  %From now until the end of this section, we consider $s < 0$.

The intersections of a plane through $AB$ with the sets $\pl_2^A C$, $\pl_2^B C$, $\pl_3 C$ are line segments, which may degenerate to points. The case when all these segments are points ($A'$, $B'$, and $A_0$, respectively) is represented in Fig.~\ref{fig section AB}, and the case when all three segments are non-degenerate segments ($[A',\, A'']$, $[B',\, B'']$, and $[A_0,\, B_0]$) is shown in Fig.~\ref{fig seckgr1}.

Denote
\beq\label{del}
\del_0 C = \pl_0 C \cup \pl_2 C \cup \pl_4 C, \quad \del^A C = \pl_1^A C \cup \pl_3 C, \ \ \text{and} \ \ \del^B C = \pl_1^B C.
\eeq
These sets can be characterized as follows. $\del_0 C$ is the set of points $r \in \pl C$ such that the closed half-space bounded by a plane of support to $C$ at $r$ containing $C$ also contains $I$. $\del^A C$ is the set of points $r \in \pl C$ such that the corresponding closed half-space does not contain $A$ and, additionally, the vector $\overrightarrow{AB}$ points inside this subspace or is parallel to it, that is, $\langle \overrightarrow{AB},\, n_r \rangle \le 0$. $\del^B C$ is the set of points $r \in \pl C$ such that the corresponding closed half-space does not contain $B$ and the vector $\overrightarrow{AB}$ points outside this subspace, that is, $\langle \overrightarrow{AB},\, n_r \rangle > 0$. Observe that there is a certain asymmetry between the sets $\del^A C$ and $\del^B C$.

The proof of Lemma \ref{ls<0} is based on the following Propositions \ref{propo3}, \ref{propo5.0}, and \ref{propo5}. The proofs of Propositions \ref{propo3} and \ref{propo5} are given in Section \ref{sec technical}.

\begin{propo}\label{propo3}
For $s < 0$,\, $\pl C_s$ is the union of three sets,
$$
G_0^s = \del_0 C \cap \big[ (1-s) {C} + sA \big] \cap \big[ (1-s) {C} + sB \big],
$$
$$
G_1^s = C \cap \big[ (1-s) \del^A C + sA \big] \cap \big[ (1-s)C + sB \big],
$$
$$
G_2^s = C \cap \big[ (1-s) {C} + sA \big] \cap \big[ (1-s) \del^B C + sB \big],
$$
and $F(u^{(s)})$ is the sum of resistances of these sets,
$$ F(u^{(s)}) = F(G_0^s) + F(G_1^s) + F(G_2^s). $$
%Here and in what follows, $\stackrel{\circ}{C}$ means the interior of $C$.
\end{propo}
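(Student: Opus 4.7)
Since $C_s = C \cap \bigl[(1-s)C + sA\bigr] \cap \bigl[(1-s)C + sB\bigr]$ for $s<0$, every boundary point of $C_s$ lies on the boundary of at least one of the three intersecting convex sets, and the plan is to identify which portion of each boundary survives. The workhorse is the elementary identity that for any unit vector $n$, $\max_{(1-s)C+sA}\langle\cdot,n\rangle = \langle A,n\rangle + (1-s)\bigl(\max_C\langle\cdot,n\rangle - \langle A,n\rangle\bigr)$, together with the analogous identity for $B$.

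First I will show that $(\delta^A C \cup \delta^B C) \cap C_s = \emptyset$. If $P_0 \in \partial_1^A C \cup \partial_3 C = \delta^A C$, pick $n \in \mathcal{A}_1^A \cup \mathcal{A}_3$ with $P_0 \in \Pi^n$; by definition of these classes $\tilde\Pi^n \cap \partial C = \emptyset$, so $\langle P_0, n\rangle = \max_C \langle\cdot,n\rangle < \langle A,n\rangle$. The support-functional identity then gives $\max_{(1-s)C+sA}\langle\cdot,n\rangle < \langle P_0,n\rangle$ for $s<0$ (since $1-s>1$), hence $P_0 \notin (1-s)C+sA$ and $P_0 \notin C_s$. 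The symmetric argument disposes of $\delta^B C$. Combined with $C_s \subset (1-s)C+sA$ and $C_s \subset (1-s)C+sB$, this yields $\partial C \cap \partial C_s \subset G_0^s$; the reverse inclusion is immediate since $G_0^s \subset C_s \cap \partial C$ and $C_s \subset C$.

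For the pieces coming from the homothetic cuts, suppose $P \in \partial\bigl[(1-s)C+sA\bigr] \cap C_s$ and write $P = (1-s)P' + sA$ with $P' \in \partial C$. Choose an outward unit normal $n$ to $C$ at $P'$ and classify it among $\mathcal{A}_0, \mathcal{A}_1^A, \mathcal{A}_1^B, \mathcal{A}_2^A, \mathcal{A}_2^B, \mathcal{A}_3, \mathcal{A}_4$. Each class fixes the sign of $\langle P',n\rangle - \langle A,n\rangle$, and substituting into the identity $\langle P,n\rangle = \langle P',n\rangle + (-s)\bigl(\langle P',n\rangle - \langle A,n\rangle\bigr)$ together with the analogous formula for the $B$-constraint: the membership $P \in C$ eliminates $\mathcal{A}_0$ and $\mathcal{A}_2^B$ (both give $\langle P,n\rangle > \max_C\langle\cdot,n\rangle$); the membership $P \in (1-s)C+sB$ eliminates $\mathcal{A}_1^B$; classes $\mathcal{A}_2^A$ and $\mathcal{A}_4$ force $\langle P,n\rangle = \langle P',n\rangle$, placing $P$ in $\partial C \cap C_s = G_0^s$; and the surviving classes $\mathcal{A}_1^A$ and $\mathcal{A}_3$ yield $P' \in \delta^A C$, so $P \in G_1^s$. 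The argument for $\partial\bigl[(1-s)C+sB\bigr] \cap C_s$ is symmetric and produces $G_3^s$.

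To obtain the resistance identity $F(u^{(s)}) = F(G_0^s) + F(G_1^s) + F(G_3^s)$, I will verify that the pairwise overlaps $G_i^s \cap G_j^s$ consist of points where at least two of the three defining constraints are simultaneously tight — that is, one-dimensional edges of the convex intersection $C_s$ — whose projections onto the $x$-plane have two-dimensional Lebesgue measure zero, so they contribute nothing to the integrals. The main obstacle I expect is the exhaustive case analysis across the seven normal classes: in particular, the elimination of $\mathcal{A}_1^B$ on the $A$-side (and symmetrically $\mathcal{A}_1^A$ on the $B$-side) is the only case that cannot be closed off from membership in $C$ alone and requires cross-invoking the opposite homothetic constraint. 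Once that case is dispatched, the remainder is bookkeeping.
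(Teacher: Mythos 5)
Your decomposition and case analysis are correct and in substance coincide with the paper's own proof: the paper likewise splits $\pl C_s$ according to which of the three constraints is active, shows $\del^A C \cap \big[ (1-s)C + sA \big] = \emptyset$ and $\del^B C \cap \big[ (1-s)C + sB \big] = \emptyset$, eliminates the mixed case (your $\AAA_1^B$ case on the $A$-side) by the same support-plane/scalar-product inequality $\langle n, A-B\rangle <0$, and absorbs the points of type $\AAA_2^A$, $\AAA_4$ into the $\del_0 C$ piece. Your uniform support-function identity is a clean repackaging of what the paper does via Proposition \ref{pro} plus two ad hoc computations, so up to and including the identification $\pl C_s = G_0^s \cup G_1^s \cup G_3^s$ your argument goes through (you should also record the trivial reverse inclusions $G_1^s, G_3^s \subset \pl C_s$, not only $G_0^s \subset \pl C_s$, since the proposition asserts equality of $\pl C_s$ with the union).

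The genuine gap is in the additivity step. From the fact that two of the three constraints are simultaneously tight at a point of $G_i^s \cap G_j^s$ you cannot conclude that the overlap is a ``one-dimensional edge'': two convex bodies may share a two-dimensional boundary patch on which both constraints are tight (precisely where they have a common supporting plane along a flat piece), so tightness alone does not bound the dimension of the overlap or the measure of its projection. What saves the argument is the additional observation that the three pieces carry outward normals from pairwise disjoint subsets of $S^2$: normals along $\del_0 C$ lie in $\AAA_0 \cup \AAA_2 \cup \AAA_4$, normals along $(1-s)\del^A C + sA$ lie in $\AAA_1^A \cup \AAA_3$, and normals along $(1-s)\del^B C + sB$ lie in $\AAA_1^B$. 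Hence a common point that is regular for both surfaces has two distinct normals and is therefore a singular point of the boundary of the intersection of the two convex bodies (and a point singular for one of the surfaces is a fortiori singular for the intersection); since the singular set of a convex surface has zero two-dimensional measure, its projection on the $x$-plane is Lebesgue-null. This is exactly Lemma \ref{l convex} of the paper, which you must invoke (or reprove) to justify $F(u^{(s)})$ being the sum of the three resistances; with that ingredient supplied, your proof is complete.
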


                                  \begin{figure}[h]
\centering
%\hspace*{6mm}
\includegraphics[scale=0.25]{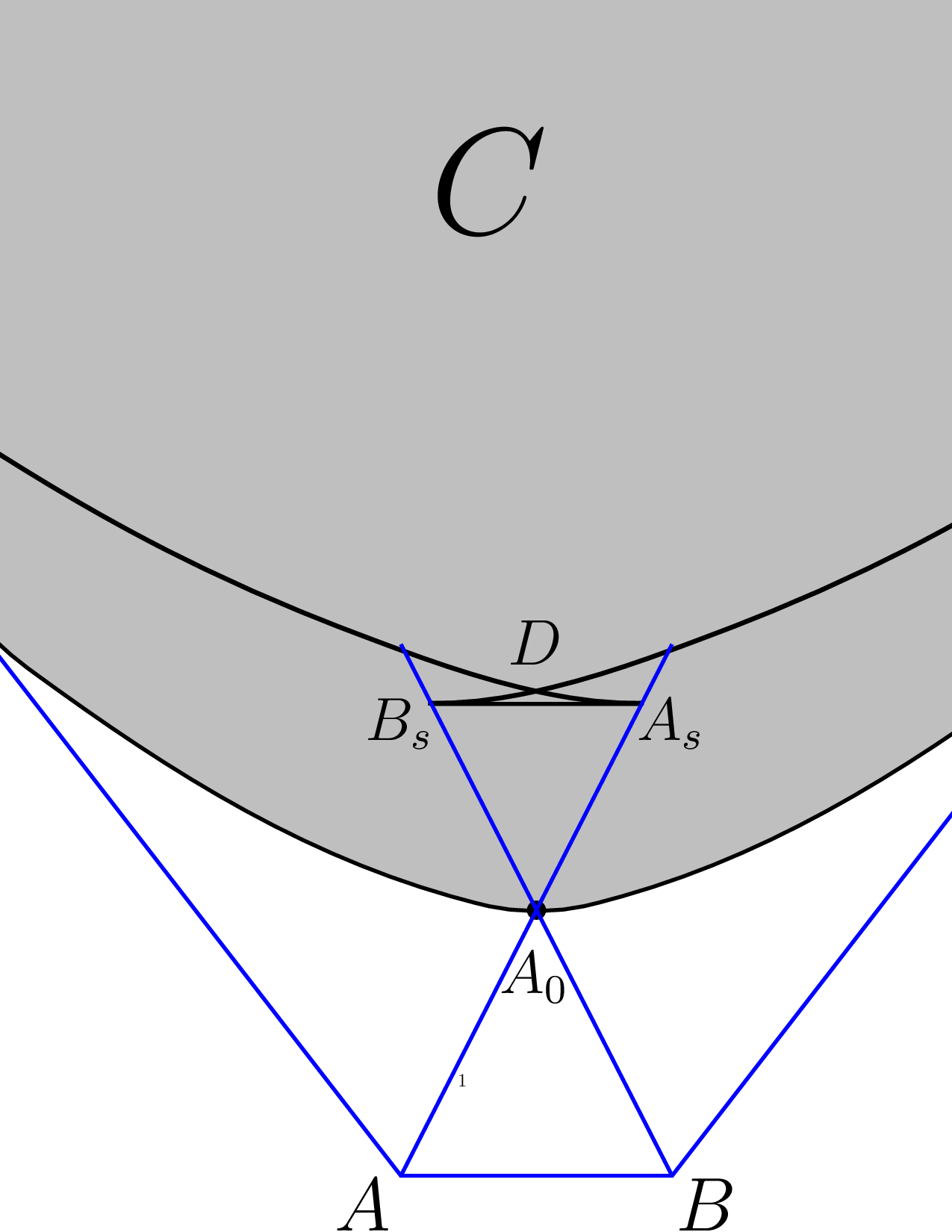}
\caption{The sections of $C$ and $C_s$ with $s < 0$ by a plane through $AB$ are shown. The (lower part of the) section of $C$ is shown lightgray. The section of $C_s$ is bounded below by the union of arcs $A'_s{A}_s$ and $B'_s{B}_s$. The arc $A'_s{A}_s$ is a part of the section of the surface $(1-s) \pl C + s A$, and the arc $B'_s{B}_s$ is a part of the section of the surface $(1-s) \pl C + s B$.}
\label{fig section AB}
\end{figure}

                                       \begin{figure}[h]
\centering
%\hspace*{6mm}
\includegraphics[scale=0.25]{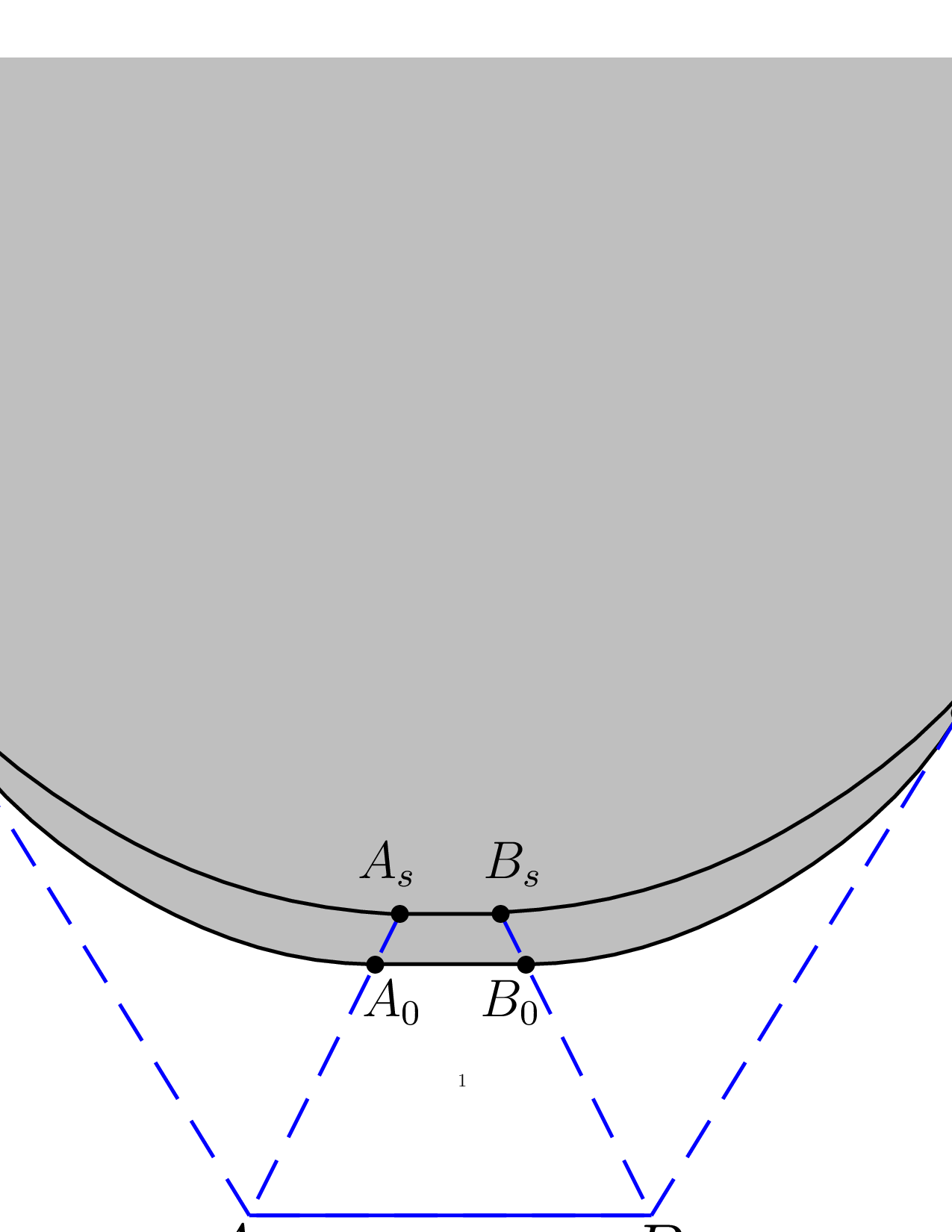}
\caption{The sections of $C$ and $C_s$ with $s < 0$ by a plane through $AB$ are shown. In this case the sections of $\pl_2^A C$, $\pl_2^B C$, $\pl_3 C$ are non-degenerate segments $[A',\, A'']$, $[B',\, B'']$, and $[A0,\, B_0]$, respectively. The section of $C$ is shown lightgray. The section of $C_s$ is bounded below by the curve $A'_s A_s B_s B'_s$.}
\label{fig seckgr1}
\end{figure}

The sections of the sets $G_0^s$,\, $G_1^s$, and $G_2^s$ by a plane through $AB$ are shown in Figures~\ref{fig section AB} and \ref{fig seckgr1}. In the former figure these sections are, respectively, the union of curves $A''' \widehat A$ and $B''' \widehat B$, the curve $A''' D$, and the curve $D B'''$. In the latter figure they are, respectively, the union of curves $A'_s \widehat A$ and $B'_s \widehat B$, $A'_s B_s$, and $B_s B_s'$.

Using the identity for three arbitrary sets $\mathcal{A},\, \mathcal{B}$, and $\mathcal{C}$,
$$
F(\mathcal{A} \cap \mathcal{B} \cap \mathcal{C}) =
F(\mathcal{A}) - F(\mathcal{A} \setminus \mathcal{B}) - F(\mathcal{A} \setminus \mathcal{C})
+ F\big( \mathcal{A} \setminus (\mathcal{B} \cup \mathcal{C}) \big),
$$
one obtains
$$
F(G_0^s) = F(\del_0 C) - F\big(\del_0 C \setminus \big[ (1-s)C + sA \big] \big) - F\big( \del_0 C \setminus \big[ (1-s)C + sB \big] \big)
$$
\beq\label{FG1s}
+  F\big( \del_0 C \setminus \big[ (1-s)C + s(A \cup B) \big] \big);
\eeq
$$
F(G_1^s) = F\big( (1-s) \del^A C + sA \big) -  F\big( \big[ (1-s) \del^A C + sA \big] \setminus C \big)
- F\big( \big[ (1-s) \del^A C + sA \big] \setminus \big[ (1-s)C + sB \big] \big) $$
\beq\label{FG2s}
+ F\big( \big[ (1-s) \del^A C + sA \big] \setminus \big( C \cup \big[ (1-s)C + sB \big] \big) \big);
\eeq
$$
F(G_2^s) = F\big( (1-s) \del^B C + sB \big) -  F\big( \big[ (1-s) \del^B C + sB \big] \setminus C \big)
- F\big( \big[ (1-s) \del^B C + sB \big] \setminus \big[ (1-s)C + sA \big] \big) $$
\beq\label{FG3s}
+ F\big( \big[ (1-s) \del^B C + sB \big] \setminus \big( C \cup \big[ (1-s)C + sA \big] \big) \big).
\eeq
Observe that the sections of the sets $\del_0 C \setminus \big[ (1-s)C + sA \big]$ and $\del_0 C \setminus \big[ (1-s)C + sB \big]$ in Fig.~\ref{fig section AB} are, respectively, the curves $A'A'''$ and $B'B'''$;
the sections of $\big[ (1-s) \del^AC + sA \big] \setminus C$ and $\big[ (1-s) \del^AC + sA \big] \setminus \big[ (1-s)C + sB \big]$ are, respectively, the curves $A_s'A'''$ and $A_s D$;
the sections of $\big[ (1-s) \del^B C + sB \big] \setminus C$ and $\big[ (1-s) \del^B C + sB \big] \setminus \big[ (1-s)C + sA \big]$ are, respectively, the curves $B_s'B'''$ and $B_s D$.

We are going to estimate the terms in the right hand sides of \eqref{FG1s}, \eqref{FG2s}, and \eqref{FG3s}. Since $(1-s) \del^A C + sA$ is the homothety of $\del^A C$ with the center at $A$ and ratio $1-s$, one has $F\big( (1-s) \del^A C + sA \big) = (1-s)^2 F(\del^A C)$, and similarly, $F\big( (1-s) \del^B C + sB \big) = (1-s)^2 F(\del^B C)$. Now, using formula \eqref{del} and setting $s=0$ in formulas \eqref{pl_0}, \eqref{pl_1}, \eqref{pl_2}, \eqref{pl_3}, \eqref{pl_4}, one comes to the following proposition.

\begin{propo}\label{propo5.0}
$$
F(\del_0 C) = F(\pl_0 C) + F(\pl_2 C) + F(\pl_4 C) = a_0 + b_2 - \frac{a_2}{2};
$$
$$
F\big( (1-s) \del^A C + sA \big) + F\big( (1-s) \del^B C + sB \big)
$$ $$
= (1-s)^2 \big( F(\pl_1^A C) + F(\pl_1^B C) + F(\pl_3 C) \big) = (1-s)^2\, \frac{a_1 +b_3}{2}.
$$
\end{propo}

The following Proposition \ref{propo5} is proved in Section \ref{sec technical}.

\begin{propo}\label{propo5}
$$
\text{\rm (a)} \hspace*{28mm} F\big(\del_0 C \setminus \big[ (1-s)C + sA \big] \big) + F\big(\del_0 C \setminus \big[ (1-s)C + sB \big] \big)
\hspace*{33mm} $$ $$
- F\big(\del_0 C \setminus \big[ (1-s)C + s(A \cup B) \big] \big)
$$ $$
+ F\big( \big[ (1-s) \del^A C + sA \big] \setminus C \big) + F\big( \big[ (1-s) \del^B C + sB \big] \setminus C \big)
$$ $$
= -a_2 s -2b_4 s + o(|s|);
$$ $$
\text{\rm (b)} \quad F\big( \big[ (1-s) \del^A C + sA \big] \setminus \big[ (1-s)C + sB \big] \big) +
F\big( \big[ (1-s) \del^B C + sB \big] \setminus \big[ (1-s)C + sA \big] \big)
$$ $$
= -a_3 s + o(|s|);
$$
$$\text{\rm (c)} \ F\Big( \big[ (1-s) \del^A C + sA \big] \setminus \big( C \cup \big[ (1-s)C + sB \big] \big) \Big) +
F\Big( \big[ (1-s) \del^B C + sB \big] \setminus \big( C \cup \big[ (1-s)C + sA \big] \big) \Big) = o(|s|).$$
\end{propo}

Summing up the values obtained in Lemmas \ref{propo5.0} and \ref{propo5} and using Proposition \ref{propo3}, one comes to the statement of Lemma \ref{ls<0}.

\section{Proofs of technical statements}\label{sec technical}

\subsection{Proof of Lemma \ref{lt1}}

Assume the contrary, that is, a point $r_0 \in U$ is extreme. (It follows of course that $n_{r_0} \in \EEE$.) Take  the $\ve$-neighborhood of $r_0$ (let it be denoted by $B_\ve(r_0)$) with $\ve > 0$ sufficiently small, so as $B_\ve(r_0) \cap \pl C$ is contained in $U$. Since $r_0$ is extreme, it is not contained in {\conv}$(C \setminus B_\ve(r_0))$. Draw a plane $\Pi$ strictly separating {\conv}$(C \setminus B_\ve(r_0))$ and $r_0$.

Let $\pl_+ C$ be the part of $\pl C$ cut off by $\Pi$ and containing $r_0$. That is, $\pl_+ C$ is the intersection of $\pl C$ with the open half-space bounded by $\Pi$ and containing $r_0$. Take a point $r' \in \pl_+ C$ such that the tangent plane at it is parallel to $\Pi$. Take an open set $\OOO \subset S^2$ containing $n_{r'}$ and such that all points $r$ with the outward normal $n_r$ in $\OOO$ lie in $\pl_+ C$. Take $\check n$ in the set $\OOO \setminus \EEE$, which by the hypothesis of the lemma is not empty.

The planar set $\{ r : n_r = \check n \}$ is convex, maybe degenerating to a line segment or a point. Let $\check r$ be an extreme point of it (if the set is a segment or a point then $\check r$ coincides with an endpoint of the segment or with that point, respectively). Then $\check{r}$ is an extreme point of $C$,\, ${\check r} \in U$, and $n_{\check r} = \check n \not\in \EEE$, in contradiction with the hypothesis of Lemma \ref{lt1}.

\subsection{Proof of Lemma \ref{lt2}}\label{sub lt2}

For $0 \le s \le 1$ we have $C \subset C_s \subset \tilde C$, hence $C_s$ is the epigraph of a convex function $u^{(s)}$ defined on $\Om$ and satisfying $\tilde u \le u^{(s)} \le u$. Since the function $\tilde u$ satisfies statements (a) and (b), so does the function $u^{(s)}$.

Let now $s < 0$. The point $A' = A + (0, 0, \ve)$ is contained in $C$. The set $(1-s) C + s A'$ is the homothety of $C$ with the center $A' \in C$ and ratio $1-s > 1$, and therefore, contains $C$. Hence the set $(1-s)C + sA = (1-s)C + sA' + (0, 0, -s\ve)$ contains $C + (0, 0, -s\ve)$. The same argument holds for $(1-s)C + sB$. It follows that $C + (0, 0, -s\ve) \subset C_s \subset C$, and therefore, $C_s$ is the epigraph of a convex function $u^{(s)}$ defined on $\Om$ and satisfying $u \le u^{(s)} \le u - s\ve$. For $-1 < s < 0$, $u \le u^{(s)} < u + \ve$, and so, statement (b) is true.

For any $x \in \Om \setminus \UUU$, the line segment joining $(x, u(x))$ and $A$ is the disjoint union of two non-degenerate segments. One of them, let it be $\lam_1(x)$, is a closed segment with one endpoint at $(x, u(x))$, and is contained in $C$; let its length be $|\lam_1(x)| = l_1(x)$. The other one is a semiopen segment with one endpoint at $A$, and is disjoint with $C$; let its length be $l_2(x)$. The function $l_1(x)$ is continuous, positive, and defined on the compact set $\Om \setminus \UUU$; hence it is bounded below by a positive constant $c_1$. The function $l_2(x)$ is bounded above by a constant $c_2$.

Let $-c_1/c_2 < s < 0$; then for all $x \in \Om \setminus \UUU$, the point $(x, u(x))$ is contained in the segment $(1-s) \lam_1(x) + s A$, and therefore, in $(1-s)C + sA$. A similar argument is valid also for $(1-s)C + sB$; hence choosing $s_0 = -c_1/c_2$, we have the following: for $s_0 < s < 0$ and for all $x \in \Om \setminus \UUU$, the point $(x, u(x))$ is contained in $[(1-s)C + sA] \cap [(1-s)C + sB]$. Since by definition $(x, u(x)) \in \pl C$, one concludes that this point belongs to $\pl C_s$, and therefore, $u(x) = u^{(s)}(x)$. We have proved that for $s_0 < s < 0$, $u\rfloor_{\Om\setminus\UUU} = u^{(s)}\rfloor_{\Om\setminus\UUU}$, and so, statement (a) is true.

\subsection{Lemmas \ref{labc} and \ref{l convex}}

Later on we will need the following technical lemmas.

\begin{lemma}\label{labc}
(a) If a point $P$ belongs to $\del^A C$ then the interval $(A,\, P)$ does not intersect $C$.

(b) If an interval $(A,\, P)$ intersects $\pl_0 C \cup \pl_4 C$ then $P \not\in C$.

(c) If an interval $(A,\, P)$ intersects $\pl_2 C$ then either $P \not\in C$, or $P \in \pl_2 C$.

  (d) If both segments $I_+$ and $I_-$ degenerate to points, then $\overline{\pl_2^A C} \cap \overline{\pl_2^B C} = I_+ \cup I_-$. If only one of them, say $I_+$, is a point, then $\overline{\pl_2^A C} \cap \overline{\pl_2^B C} = I_+$. If both segments are non-degenerate, then $\overline{\pl_2^A C} \cap \overline{\pl_2^B C} = \emptyset$.
\end{lemma}

\begin{proof}
(a) The point $P$ lies on $\Pi^n$ with a certain $n \in \AAA_1^A \cup \AAA_3$. The corresponding plane $\tilde\Pi^n$ contains $A$ and does not intersect $C$. It follows that $\Pi^n$ separates $C$ and $A$ and does not contain $A$, and therefore, the interval $(A,\, P)$ does not intersect $C$.

(b) Each point of intersection of $(A,\, P)$ and $\pl_0 C$ lies on the plane $\Pi^n = \tilde\Pi^n$, with an $n \in \AAA_0$. The point $A$ does not belong to $\Pi^n$ and lies in the same closed half-space bounded by $\Pi^n$ as $C$. It follows that $P$ lies in the complement to this half-space, and therefore, is not contained in $C$.

Similarly, each point of intersection of $(A,\, P)$ and $\pl_4 C$ lies on the plane $\Pi^n = \tilde\Pi^n$, with an $n = n_\pm \in \AAA_4$. The intersection of $\Pi^n$ and $C$ is the segment $I_\pm$ parallel to $I$ and non-parallel to to $(A,\, P)$. Since $(A,\, P)$ intersects the segment $I_\pm$, the point $P$ does not belong to this segment, and therefore, does not belong to $C$.

(c) Let $(A,\, P)$ intersect $\pl_2^B C$; then each point of intersection lies on $\Pi^n = \tilde\Pi^n$, with an $n \in \AAA_2^B$. Hence, like in the previous item, $A$ does not belong to $\Pi^n$ and lies in the same closed half-space bounded by $\Pi^n$ as $C$, and thus, $P$ does not belong to $C$.

Let now $(A,\, P)$ intersect $\pl_2^A C$. Then each point of intersection lies on a certain plane $\Pi^n = \tilde\Pi^n$ containing $A$, and therefore, $P$ is also contained in this plane. Thus, if $P \in C$ then $\Pi^n$ is a plane of support to $C$ at $P$, and so, $P \in \pl_2 C$.

(d) Let $I_+$ and $I_-$ be the segments $[A_+,\, B_+ ]$ and $[A_-,\, B_-]$, respectively, parallel and co-directional with $[A,\, B]$. Recall that $\Pi[x_1]$ is the half-plane bounded by line $AB$ and containing the line $(x_1, \RRR, w(x_1))$. The intersection of each half-plane $\Pi[x_1]$,\, $x_1^- < x_1 <x_1^+$ with $\pl_2^A C$ is the segment $[A'(x_1),\, A''(x_1)]$, and with $\pl_2^B C$, the segment $[B'(x_1),\, B''(x_1)]$. These segments are disjoint.
The half-planes $\Pi[x_1^-]$ and $\Pi[x_1^+]$ contain, respectively, the segments $I_-$ and $I_+$, and do not intersect the sets $\pl_2^A C$ and $\pl_2^B C$.
      %The points $A'(x_1),\, A''(x_1)$, $B'(x_1),\, B''(x_1)$ are continuous functions of $x_1$.

For all $x_1^- < \al < \bt <x_1^+$, the distance between the part of $\pl_2^A C$ contained between the half-planes $\Pi[\al]$ and $\Pi[\bt]$ and the part of $\pl_2^B C$ contained between them is positive. Any family of points $a(x_1) \in [A'(x_1),\, A''(x_1)]$,\, $x_1^- < x_1 <x_1^+$ converges to $A_-$ as $x_1 \to x_1^-$ and to $A_+$ as $x_1 \to x_1^+$. Similarly, a family of points $b(x_1) \in [B'(x_1),\, B''(x_1)]$ converges to $B_-$ as $x_1 \to x_1^-$ and to $B_+$ as $x_1 \to x_1^+$.

It follows that the closure of $\pl_2^A C$ contains the points $A_-$ and $A_+$, and does not contain the remaining points of $I_-$ and $I_+$. Similarly, the closure of $\pl_2^B C$ contains the points $B_-$ and $B_+$, and does not contain the remaining points of $I_-$ and $I_+$. Claim (d) follows from this.
\end{proof}

\begin{lemma}\label{l convex}
Let $C_1$ and $C_2$ be two convex sets with nonempty interior in $\RRR^3$, and let $B_1 \subset \pl C_1$ and $B_2 \subset \pl C_2$ be two Borel sets such that the sets of normals to $C_1$ at the points of $B_1$ and to $C_2$ at the points of $B_2$ are disjoint. Then $B_1 \cap B_2$ has zero 2-dimensional Hausdorff measure.
\end{lemma}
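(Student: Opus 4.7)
The plan is to represent the relevant parts of $\pl C_1$ and $\pl C_2$ as graphs of convex (lower) or concave (upper) functions over the $x$-plane, to invoke Alexandrov's theorem on almost-everywhere differentiability of convex functions, and then to run a Lebesgue density argument forcing two distinct boundary normals to coincide — in contradiction with the disjointness hypothesis.

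First I would discard from each $B_i$ the part lying on the lateral boundary of $C_i$, i.e.\ points whose outward normal $n_r$ has vanishing $z$-component; such points project into $\pl(\text{pr}_x(C_i))$, the boundary of a planar convex set, which has two-dimensional Lebesgue measure zero. The remainder of $\pl C_i$ splits into a lower graph $\{z=u_i(x)\}$, with $u_i$ convex and $(n_r)_3<0$, and an upper graph $\{z=v_i(x)\}$, with $v_i$ concave and $(n_r)_3>0$, defined over the interior of $\text{pr}_x(C_i)$. By Alexandrov's theorem $u_i$ and $v_i$ are differentiable almost everywhere, with outward normals $(\nabla u_i,-1)/\sqrt{1+|\nabla u_i|^2}$ and $(-\nabla v_i,1)/\sqrt{1+|\nabla v_i|^2}$ at their regular points. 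Accordingly, $\text{pr}_x(B_1\cap B_2)$ decomposes, up to a null set, into four Borel pieces indexed by the choice of lower or upper graph for each $C_i$, and it suffices to handle each piece separately.

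In the representative lower/lower case, set $E:=\text{pr}_x(B_1^-\cap B_2^-)$ and assume for contradiction that $|E|>0$. Every $x\in E$ satisfies $u_1(x)=u_2(x)$, and almost every such $x$ is a differentiability point of both functions. Fix a Lebesgue density point $x_0\in E$ that is also a differentiability point of $u_1$ and $u_2$, and consider $h:=u_1-u_2$, which vanishes on $E$ and admits the expansion $h(x)=\langle\nabla u_1(x_0)-\nabla u_2(x_0),\,x-x_0\rangle+o(|x-x_0|)$ near $x_0$. If the linear part were nonzero, the zero set of $h$ inside the ball of radius $r$ about $x_0$ would lie in an $o(r)$-thin slab around a hyperplane through $x_0$, hence have density zero at $x_0$, contradicting $E$'s density one at $x_0$. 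Therefore $\nabla u_1(x_0)=\nabla u_2(x_0)$; the outward normals to $C_1$ and $C_2$ at the common point $(x_0,u_1(x_0))\in B_1\cap B_2$ coincide, producing a common element of the two normal sets — a contradiction. The upper/upper case is symmetric.

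The main obstacle will be the mixed lower/upper (and upper/lower) cases, where the same density argument only yields $\nabla u_1(x_0)=\nabla v_2(x_0)$, so the two outward normals are antipodes rather than equal and the disjointness is not directly violated. Here I would note that $u_1-v_2$ is then a convex function that vanishes at $x_0$ and has zero gradient there, so it attains its minimum $0$ on the convex set $\overline{\conv(E)}$ of positive planar area; on the interior of this set $u_1=v_2$ is simultaneously convex and concave, hence affine, producing a flat piece shared by $\pl C_1$ and $\pl C_2$ with opposite normals. In the paper's intended applications the sets $B_i$ consist only of lower-boundary points of the respective bodies, so this anomalous mixed configuration does not actually arise, and the lower/lower argument suffices.
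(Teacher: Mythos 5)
Your route is genuinely different from the paper's. The paper's proof is essentially three lines: $B_1\cap B_2\subset \pl C_1\cap\pl C_2\subset\pl(C_1\cap C_2)$; at a common point $r$ either $r$ is already singular for one of the two bodies, or it is regular for both and the two outward normals differ by the disjointness hypothesis, so in either case two distinct planes of support of $C_1\cap C_2$ pass through $r$; hence $B_1\cap B_2$ lies in the singular set of $\pl(C_1\cap C_2)$, which has zero two-dimensional measure, and orthogonal projection does not increase measure. Your decomposition into lateral parts plus lower/upper graphs, followed by a.e.\ differentiability and a Lebesgue-density argument, is a correct (if longer) proof in the lateral, lower/lower and upper/upper cases; what it buys is that it never mentions the intersection body and only uses first-order a.e.\ differentiability of convex functions (Alexandrov's theorem is not needed), at the cost of case-splitting. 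A minor technical remark: the projections you take are analytic rather than Borel sets, which is harmless for the density theorem.

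The mixed case, however, is a genuine gap in your text taken as a proof of the lemma as stated: deferring to ``the paper's intended applications'' does not prove a statement formulated for arbitrary convex bodies and arbitrary Borel sets $B_i$. What your analysis of that case actually uncovers is that the statement, read literally, fails there: take $C_1=\{z\le 0\}$ and $C_2=\{z\ge 0\}$ intersected with a ball, $B_1$ the open flat top face of $C_1$ and $B_2$ the open flat bottom face of $C_2$; the normal sets are $\{(0,0,1)\}$ and $\{(0,0,-1)\}$, which are disjoint, yet $B_1\cap B_2$ projects onto a disk of positive measure. The paper's own argument tacitly excludes exactly this configuration: the step ``$n_1\ne n_2$ implies $r$ is singular for $C_1\cap C_2$'' presumes the two supporting planes are distinct as planes, which fails when $n_2=-n_1$, and the zero-measure property of the singular set is used for $\pl(C_1\cap C_2)$ as if $C_1\cap C_2$ were full-dimensional, which fails precisely when the boundaries share a two-dimensional flat piece with opposite normals. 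In the only place the lemma is invoked (Proposition \ref{propo3}), the three surfaces are parts of boundaries of epigraph-type bodies, all relevant normals have nonpositive third component and the pairwise intersections of the bodies are full-dimensional, so both the paper's argument and your lower/lower argument cover what is needed. To make your proof self-contained, do not appeal to the application: add the hypothesis that rules out the flat antipodal configuration (for instance, that $C_1\cap C_2$ has nonempty interior, or that no normal attached to $B_1$ is the opposite of a normal attached to $B_2$) and note that under it the mixed case is vacuous or reduces, via your own affine-piece argument, to a contradiction.
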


\begin{proof}
We have
$B_1 \cap B_2 \subset \pl C_1 \cap \pl C_2 \subset \pl(C_1 \cap C_2).$ Each point $r \in B_1 \cap B_2$ is a singular point of $\pl(C_1 \cap C_2).$ Indeed, if $r$ is a singular point of $\pl C_1$ or $\pl C_2$, this is true. If, otherwise, it is a regular point of both $\pl C_1$ and $\pl C_2$, and $n_1$ and $n_2$ are the corresponding outward normals, then according to the hypothesis of the lemma $n_1 \ne n_2$, and therefore, $r$ is singular. The set of singular points of $\pl(C_1 \cap C_2)$ has zero 2-dimensional Hausdorff measure.
\end{proof}

\subsection{Proof of Proposition \ref{propo3}}

For $s < 0$,\, $\pl C_s$ is the disjoint union of three sets,
$$
\pl C \cap \big[ (1-s)C + sA \big] \cap \big[ (1-s)C + sB \big],
$$ $$
C \cap \pl\big[ (1-s)C + sA \big] \cap \big[ (1-s)C + sB \big],
$$ $$
C \cap \big[ (1-s)C + sA \big] \cap \pl\big[ (1-s)C + sB \big],
$$
which contain the sets $G_0^s,\, G_1^s,\, G_2^s$, respectively. Therefore  the union of the sets $G_0^s,\, G_1^s,\, G_2^s$ is contained in $\pl C_s$. It remains to prove the reverse inclusion.

The boundary $\pl C$ is the disjoint union $\pl C = \del_0 C \sqcup \del^A C \sqcup \pl^B C$.
 Additionally, $\del^A C \cap \big[ (1-s)C + sA \big] = \emptyset$
 (otherwise the open segment joining $A$ and a point of $\del^A C$ would contain a point of $C$, which contradicts statement (a) of Lemma \ref{labc}),
 and similarly, $\del^B C \cap \big[ (1-s)C + sB \big] = \emptyset$.
It follows that
\beq\label{total0}
\pl C \cap \big[ (1-s)C + sA \big] \cap \big[ (1-s)C + sB \big] = \del_0 C \cap \big[ (1-s)C + sA \big] \cap \big[ (1-s)C + sB \big] = G_0^s.
\eeq

We have $\pl\big[ (1-s)C + sA \big] =  (1-s) \pl C + sA$. Let $x \in [(1-s) \del_0 C + sA] \cap C$; then $x = (1-s) x_1 + sA \in C$ for some $x_1 \in \del_0 C$.
It follows from statements (b) and (c) of Lemma \ref{labc} that both $x$ and $x_1$ lie in $\pl_2 C$,
and therefore, $x \in \del_0 C \cap \big[ (1-s)C + sA \big]$. Hence
\beq\label{star}
[(1-s) \del_0 C + sA] \cap C \subset \del_0 C \cap \big[ (1-s)C + sA \big].
\eeq
Similarly, replacing $A$ with $B$ in \eqref{star}, one gets
\beq\label{starB}
[(1-s) \del_0 C + sB] \cap C \subset \del_0 C \cap \big[ (1-s)C + sB \big].
\eeq

Let us now prove that
\beq\label{star2}
\big[ (1-s) \del ^B C + sA \big] \cap \big[ (1-s)C + sB \big] = \emptyset.
\eeq
Indeed, otherwise there exist two points $x_1 \in \del^B C$ and $x_2 \in C$ such that $(1-s) x_1 + sA = (1-s) x_2 + sB$. Let the outward normal to the plane tangent to $C$ at $x_1$ be denoted by $n_{x_1}$. The plane of support to $\tilde C$ with the outward normal $n_{x_1}$ contains $B$ and does not contain $A$, hence the vectors $n_{x_1}$ and $\overrightarrow{BA}$ form an obtuse angle, that is, $\langle n_{x_1},\, A-B \rangle < 0$. Since $(1-s) (x_2 - x_1) = s(A-B)$ and $s<0$, we have $\langle n_{x_1},\, x_2 - x_1 \rangle > 0$. This means that $x_2$ lies in the open subspace bounded by the tangent plane to $C$ at $x_1$ that does not contain $C$, that is, $x_2 \not\in C$, in contradiction with our assumption.

From \eqref{star} and \eqref{star2} we obtain
\beqo\label{ttttt}
\begin{split}
C \cap \pl \big[ (1-s) C + sA \big] \cap \big[ (1-s)C + sB \big] &= \Big( C \cap \big[ (1-s) \del_0 C + sA \big] \cap \big[ (1-s)C + sB \big] \Big) \cup \\
\Big( C \cap \big[ (1-s) \del^B C + sA \big] \cap \big[ (1-s)C + sB \big] \Big) &\cup \Big( C \cap \big[ (1-s) \del^A C + sA \big] \cap \big[ (1-s)C + sB \big] \Big) \subset \\
\Big( \del_0 C \cap \big[ (1-s)C + sA \big] \cap \big[ (1-s)C + sB \big] \Big) &\cup \Big( C \cap \big[ (1-s) \del^A C + sA \big] \cap \big[ (1-s)C + sB \big] \Big)
\end{split}
\eeqo
\beq\label{total1}
= G_0^s \cup G_1^s.
\eeq

Further, we have $\pl\big[ (1-s)C + sB \big] =  (1-s) \pl C + sB$. Let us prove that
\beq\label{star4}
\big[ (1-s)C + sA \big] \cap \big[ (1-s) \del ^A C + sB \big] \subset \big[ (1-s) \del^A C + sA \big] \cap \big[ (1-s)C + sB \big]
\eeq
Take a point $x \in \big[ (1-s)C + sA \big] \cap \big[ (1-s) \del ^A C + sB \big]$; there exist $x_1 \in \del^A C$ and $x_2 \in C$ such that $x = (1-s) x_1 + sB = (1-s) x_2 + sA$. The plane of support to $\tilde C$ with the outward normal $n_{x_1}$ contains $A$ (and may contain or not contain $B$), hence $\langle n_{x_1},\, B-A \rangle \le 0$, and using that $(1-s) (x_2 - x_1) = s(B-A)$, we get $\langle n_{x_1},\, x_2 - x_1 \rangle \ge 0$. Since $x_2 \in C$, we conclude that $\langle n_{x_1},\, x_2 - x_1 \rangle = 0$. Hence the tangent planes to $C$ at $x_1$ and $x_2$ coincide, and the plane of support to $\tilde C$ with the same outward normal contains $I$. It follows that $x_1$ lies in $\pl_3 C$, and therefore, $x_2$ also lies in $\pl_3 C$, and $x \in \big[ (1-s) \pl_3 C + sA \big] \cap \big[ (1-s) \pl_3 C + sB \big] \subset \big[ (1-s) \del^A C + sA \big] \cap \big[ (1-s)C + sB \big]$. Formula \eqref{star4} is proved.

Thus, using \eqref{starB} and \eqref{star4}, one can write
\beqo\label{totttt}%{total2}
\begin{split}
\big[ (1-s)C + sA \big] \cap \pl \big[ (1-s) C + sB \big] \cap C &= \Big( \big[ (1-s)C + sA \big] \cap \big[ (1-s) \del_0 C + sB \big] \cap C \Big) \cup\\
\Big( \big[ (1-s)C + sA \big] \cap \big[ (1-s) \del^A C + sB \big] \cap C \Big) &\cup \Big( \big[ (1-s)C + sA \big] \cap \big[ (1-s) \del^B C + sB \big] \cap C \Big) \subset \\
\Big( \big[ (1-s)C + sA \big] \cap \del_0 C \cap \big[ (1-s)C + sB \big] \Big) &\cup \Big( \big[ (1-s)C + sA \big] \cap \big[ (1-s) \del^B C + sB \big] \cap C \Big)
\end{split}
\eeqo
\beq\label{total2}
\cup \Big( \big[ (1-s) \del^A C + sA \big] \cap \big[ (1-s)C + sB \big] \cap C
\Big) = G_0^s \cup G_1^s \cup G_2^s.
\eeq

It follows from \eqref{total0}, \eqref{total1}, and \eqref{total2} that $\pl C_s \subset G_0^s \cup G_1^s \cup G_2^s$.

Of course, $F(u^{(s)}) = F(\pl C_s)$. In order to prove that $F(\pl C_s) = F(G_0^s) + F(G_1^s) + F(G_2^s)$, it suffices to show that the intersections $G_0^s \cap G_1^s$,\, $G_1^s \cap G_2^s$,\, $G_0^s \cap G_2^s$ have zero measure. We have
$$
G_0^s \cap G_1^s \subset \del_0 C \cap \big[ (1-s) \del^AC + sA \big],
$$ $$
G_1^s \cap G_2^s \subset \big[ (1-s) \del^AC + sA \big] \cap  \big[ (1-s) \del^B C + sB \big],
$$  $$
G_0^s \cap G_2^s \subset \del_0 C \cap \big[ (1-s) \del^B C + sB \big].
$$
The sets of outward normals at the points of the sets $\del_0 C \subset \pl C$,\, $(1-s) \del^AC + sA \subset \pl [(1-s) C + sA]$, and $(1-s) \del^B C + sB \subset \pl [(1-s) C + sB]$ are pairwise disjoint, hence by Lemma \ref{l convex}, the pairwise intersections of these sets have zero measure. This finishes the proof of Proposition \ref{propo3}.

\subsection{Proof of Proposition \ref{propo5}}

Denote by $\Pi_\vphi$ the half-plane bounded by the vertical line through $A$ and making the angle $\vphi$ with the ray $AB$, where the angle $\vphi \in [0,\, 2\pi)$ is counted, say, counterclockwise from $AB$. In each half-plane $\Pi_\vphi$ draw the ray with the vertex at $A$ touching the planar set $\Pi_\vphi \cap C$. The intersection of the ray with this set is a segment (maybe degenerating to a point); let it be $[A'_\vphi,\, A''_\vphi]$ (see Fig.~\ref{fig vphi}). Correspondingly, the semiopen segment $[A,\, A'_\vphi)$ does not intersect $C$.

The segment $[A'_\vphi,\, A''_\vphi]$ does not intersect the set $\pl_0 C \cup \pl_1 ^A C \cup \pl_2^B C \cup \pl_3 C$. Indeed, assume the contrary: a point of this set belongs to $[A'_\vphi,\, A''_\vphi]$. The line $AA'_\vphi$ is contained in a plane of support to $C$ at that point. However, any plane of support to $C$ at a point of $\pl_0 C \cup \pl_1 ^A C \cup \pl_2^B C \cup \pl_3 C$ is a plane $\Pi^n$ with $n \in \AAA_0 \cup \AAA_1^A \cup \AAA_2^B \cup \AAA_3$. No plane of this form contains $A$.

Thus, there may be three cases.
\vspace{1mm}

(i) The segment $[A'_\vphi,\, A''_\vphi]$ intersects $\pl_2^A C$; then it belongs to $\pl_2^A C$. The set of corresponding values of $\vphi$ is denoted by $\Phi_2^A$.

(ii) The segment intersects $\pl_4 C$; then it degenerates to a point that lies in $\pl_4 C$. (Recall that $\pl_4 C$ is the union of two segments, $I_-$ and $I_+$, parallel to $AB$.) The set of corresponding values of $\vphi$ is denoted by $\Phi_4^A.$

(iii) The segment intersects $\pl_1 C$; then it lies in $\pl_1 C$ (and therefore, in $\pl_1^B C$). The set of corresponding values of $\vphi$ is denoted by $\Phi_{1}^A.$
\vspace{1mm}

The sets $\Phi_2^A$, $\Phi_4^A$, and $\Phi_{1}^A$ are disjoint, and their union is $[0,\, 2\pi)$.

Let us prove the following auxiliary lemma.

\begin{lemma}\label{l intersect}
(a) If $\vphi \in \Phi_2^A \cup \Phi_4^A$ then an arc of the curve $\pl C \cap \Pi_\vphi$ bounded below by the point $A'_\vphi$ and containing this point lies in $\del_0 C$.

(b) If $\vphi \in \Phi_{1}^A$ then an arc of the curve $\pl C \cap \Pi_\vphi$ containing the point $A'_\vphi$ in its interior is disjoint with $\del_0 C$.

(c) If $\vphi \in \Phi_2^A$ then an open arc of the curve $\pl C \cap \Pi_\vphi$ bounded above by $A'_\vphi$ lies in $\pl_1^A C$.

(d) If $\vphi \in \Phi_1^A$ then an arc of the curve $\pl C \cap \Pi_\vphi$ containing $A'_\vphi$ in its interior is disjoint with $\pl_1^A C$.
\end{lemma}

\begin{proof}
(a) The segment $[A'_\vphi,\, A''_\vphi]$ is contained in $\pl_2^A C$ or in $\pl_4 C$ (in the latter case it degenerates to a point), and therefore is contained in $\del_0 C$; therefore it suffices to consider the part of the curve above $A''_\vphi$.

Let $n = n_{A''_\vphi}$ be the outward normal to $C$ at $A''_\vphi$. Since $n \in \AAA_2^A$, we have $\langle \overrightarrow{A_\vphi'' B},\, n \rangle < 0$. This inequality remains valid when $A_\vphi''$ is substituted with $Q$ and $n$ is substituted with $n_Q$, where $Q$ is a point of a sufficiently small arc of the curve $\pl C \cap \Pi_\vphi$ bounded below by $A''_\vphi$. Additionally, the segment $AQ$ intersects the interior of $C$. It follows that $Q \in \pl_0 C \subset \del_0 C$, and so, claim (a) is proved.

(b) The point $A'_\vphi$ belongs to $\pl_1 C$, and therefore, lies in the interior of $\tilde C$. Hence an arc of the curve $\pl C \cap \Pi_\vphi$ containing $A'_\vphi$ in its interior also lies in the interior of $\tilde C$, and so, is disjoint with $\del_0 C$. Claim (b) is proved.

(c) Draw the tangent plane to $C$ at $A_\vphi'$. The point $B$ belongs to the interior of the half-space bounded by this plane and containing $C$. Hence $B$ also belongs to the interior of the half-space bounded by the tangent plane at any point $P$ of a sufficiently small arc of $\pl C \cap \Pi_\vphi$ bounded above by $A'_\vphi$. Additionally, the point $A$ belongs to the interior of the complementary half-space. It follows that the plane through $A$ parallel to the tangent plane at $P$ is supporting for $\tilde C$, and its intersection with $\tilde C$ is $A$. Hence the point $P$ belongs to $\pl_1^A C$, and claim (c) is proved.

(d)  The point $A'_\vphi$ belongs to $\pl_1^B C$, hence so does a neighborhood of this point in $\pl C$. The arc of $\pl C \cap \Pi_\vphi$ contained in this neighborhood does not intersect $\pl_1^A C$. Claim (d) is proved.

\end{proof}

Denote by $\mathbf{\Pi}_i^A$ the union of half-planes $\Pi_\vphi$ with $\vphi \in \Phi_i^A$,\, $\mathbf{\Pi}_i^A = \cup_{\vphi \in \Phi_i^A} \Pi_\vphi$,\, $i = 2,\, 4,\, 1$. We have the disjoint union
$$
\mathbf{\Pi}_2^A \sqcup \mathbf{\Pi}_4^A \sqcup \mathbf{\Pi}_1^A = \RRR^3.$$

In a similar fashion, there are defined the sets $\Phi_2^B$, $\Phi_4^B$, and $\Phi_{1}^B$ and the sets $\mathbf{\Pi}_2^B$,\, $\mathbf{\Pi}_4^B$, and $\mathbf{\Pi}_1^B$.

Fix a value $s < 0$ and define the points $P_\vphi(s)$ and $Q_\vphi(s)$ on $\pl C \cap \Pi_\vphi$ as follows. If the length of the segment $[A'_\vphi,\, A''_\vphi]$ is greater than or equal to $-s$ times the length of $[A,\, A'_\vphi]$, then set $P_\vphi = P_\vphi(s) = A'_\vphi$ and take the point $Q_\vphi = Q_\vphi(s)$ on $[A'_\vphi,\, A''_\vphi]$ such that
\beq\label{cond=}
\frac{|P_\vphi Q_\vphi|}{|AP_\vphi|} = -s.
\eeq
If, otherwise, $|A'_\vphi A''_\vphi| < -s |A A'_\vphi|$ then draw the ray in $\Pi_\vphi$ with the vertex at $A$ intersecting $C \cap \Pi_\vphi$ such that the two points of intersection $P_\vphi$ and $Q_\vphi$ satisfy condition \eqref{cond=}. This ray is unique for all $\vphi$. See Fig.~\ref{fig vphi}.

                                       \begin{figure}[h]
\centering
%\hspace*{6mm}
\includegraphics[scale=0.25]{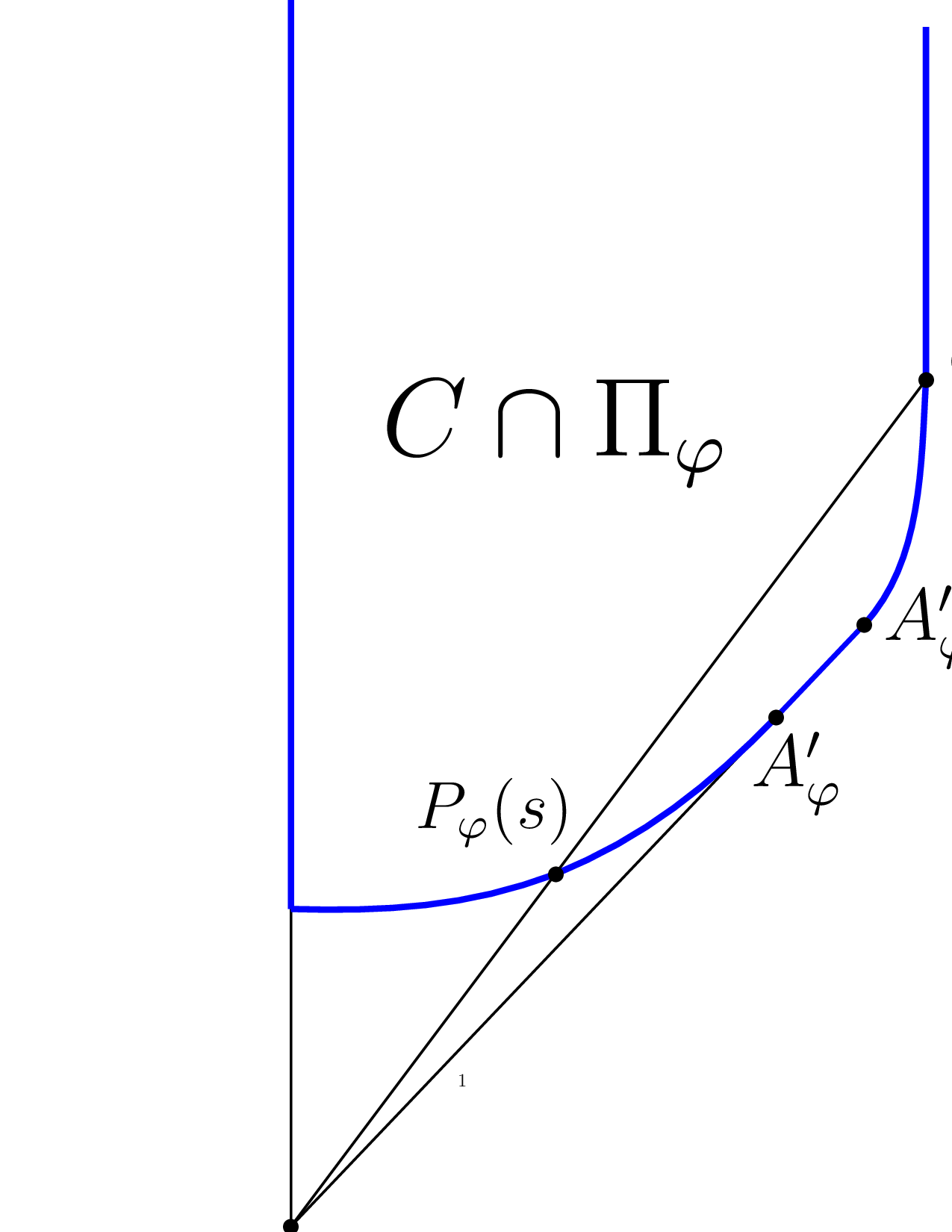}
\caption{The section of $C$ by the half-plane $\Pi_\vphi$.}
\label{fig vphi}
\end{figure}

For any $\vphi$ consider the open arc $(\stackrel{\frown}{P_\vphi, Q_\vphi})$, that is, the part of $\pl C \cap \Pi_\vphi$ that is bounded by the points $P_\vphi$ and $Q_\vphi$ and does not contain them. Any such arc  is divided by the point $A'_\vphi$ into two parts, the lower one $(\stackrel{\frown}{P_\vphi, A'_\vphi})$ (which may be empty) and the upper one $[\stackrel{\frown}{A'_\vphi, Q_\vphi})$. Denote the union of the arcs $(\stackrel{\frown}{P_\vphi, Q_\vphi})$ by $\Upsilon^A(s)$, the union of the lower arcs $(\stackrel{\frown}{P_\vphi, A'_\vphi})$ by $\LLL^A(s)$, and the union of the upper arcs $[\stackrel{\frown}{A'_\vphi, Q_\vphi})$ by $\UUU^A(s)$. The following disjoint union takes place, $\Upsilon^A(s) = \LLL^A(s) \sqcup \UUU^A(s)$.

Let us also designate by $\Upsilon_i^A(s)$, $\LLL_i^A(s)$, $\UUU_i^A(s)$ the intersections of $\Upsilon^A(s)$, $\LLL^A(s)$, $\UUU^A(s)$, respectively, with $\mathbf{\Phi}_i^A$, with $i = 2,\, 4,\, 1$. In other words, we have
$$
\Upsilon^A(s) = \cup_{0 \le \vphi < 2\pi} (\stackrel{\frown}{P_\vphi, Q_\vphi}), \quad
\LLL^A(s) = \cup_{0 \le \vphi < 2\pi} (\stackrel{\frown}{P_\vphi, A'_\vphi}), \quad
\UUU^A(s) = \cup_{0 \le \vphi < 2\pi} [\stackrel{\frown}{A'_\vphi, Q_\vphi}), $$
$$
\Upsilon_i^A(s) = \cup_{\vphi \in \Phi_i^A} (\stackrel{\frown}{P_\vphi, Q_\vphi}), \ \
\LLL_i^A(s) = \cup_{\vphi \in \Phi_i^A} (\stackrel{\frown}{P_\vphi, A'_\vphi}), \ \
\UUU_i^A(s) = \cup_{\vphi \in \Phi_i^A} [\stackrel{\frown}{A'_\vphi, Q_\vphi}), \ \, i = 2,\, 4,\, 1.$$
In a similar way we define the sets $\Upsilon_i^B(s)$, $\LLL_i^B(s)$, $\UUU_i^B(s)$.
     Of course, these sets belong to $\pl C$, and both $\LLL^A(s)$ and $\LLL^B(s)$ belong to $\del^A C \cup \del^B C = \pl_1 C \cup \pl_3 C$.
          %and both $\UUU^A(s)$ and $\UUU^B(s)$ belong to $\del_0 C = \pl_0 C \cup \pl_2 C \cup \pl_4 C$.

From now on in this subsection, $o(1)$ means a function of $s$ and $\vphi$ that goes to 0 uniformly in $\vphi$ as $s \to 0^-$, and $o(s)$ and $O(s)$ have a similar meaning.

We will need the following lemma.

\begin{lemma}\label{l areas}
The two-dimensional Lebesgue measures of the sets $\LLL_4^A(s) \setminus \LLL_4^B(s)$,\, $\UUU_4^A(s) \setminus \UUU_4^B(s)$,\, $\LLL_4^B(s) \setminus \LLL_4^A(s)$,\, $\UUU_4^B(s) \setminus \UUU_4^A(s)$ are $o(s)$.
\end{lemma}

\begin{proof}
We will prove the lemma for the set $\UUU_4^A(s) \setminus \UUU_4^B(s)$. The proofs for the other sets are basically the same.

If both $I_+$ and $I_-$ are points, there is nothing to prove: the sets $\UUU_4^A(s)$ and $\UUU_4^B(s)$ have measure zero. If, otherwise, one segment or both are non-degenerate, we do the following. Let the segment $I_+$ be non-degenerate; then consider two smaller segments with the length $\sqrt{|s|}$ contained in $I_+$ and adjacent to its endpoints. Let $I_+'(s)$ be $I_+$ minus the two smaller segments (see Fig.~\ref{fig seg}).

                                       \begin{figure}[h]
\centering
%\hspace*{6mm}
\includegraphics[scale=0.25]{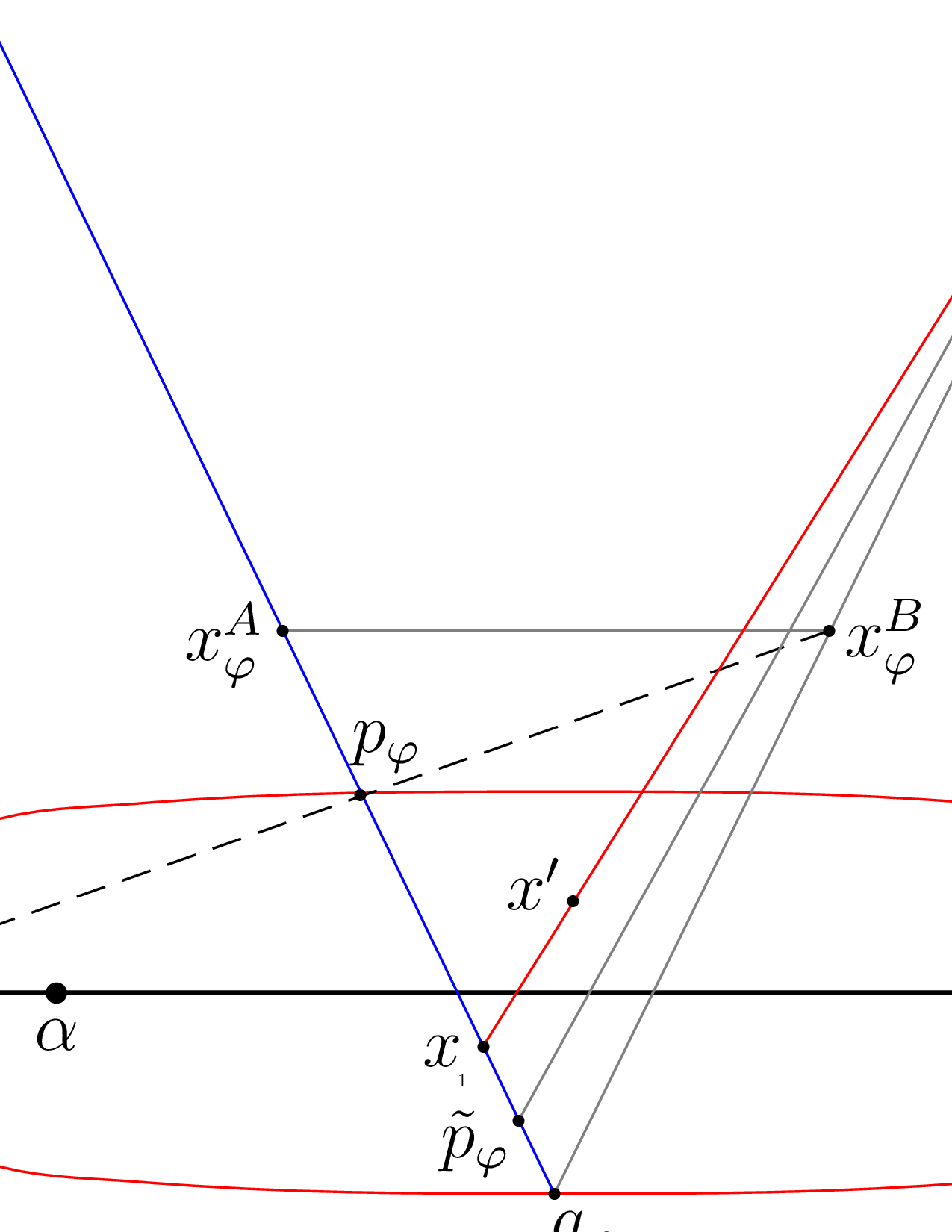}
\caption{The vertical projections of $A$, $B$, $P_\vphi$, $Q_\vphi$, $X$ on the $x$-plane are the points $x^A$, $x^B$, $p_\vphi$, $q_\vphi$, $x$ indicated in the figure. The projection of the segment $I_+$ is $x_+^A x_+^B$, and the projection of $I_+'(s)$ is $\al \bt$. %$x_+^A(s) x_+^B(s)$.
The convex set bounded by the closed curve is the projection of the intersection of $C$ with the plane through $A$, $B$, and $P_\vphi$.}
\label{fig seg}
\end{figure}

The union of all arcs $(\stackrel{\frown}{P_\vphi, Q_\vphi})$ intersecting the two smaller segments is a subset of $\Upsilon_4^A(s)$ and has the area $O(|s|^{3/2})$. The complementary part of $\Upsilon_4^A(s)$ is the union of arcs $(\stackrel{\frown}{P_\vphi, Q_\vphi})$ that intersect $I_+'(s)$. It is divided by the segment $I_+'(s)$ into two parts. The upper part (lower in Fig.~\ref{fig seg}) belongs to $\UUU_4^A(s)$, and the lower part (upper in Fig.~\ref{fig seg}) belongs to $\LLL_4^A(s)$. It remains to prove that the set of points in the part of $\UUU_4^A(s)$ that do not belong to $\UUU_4^B(s)$ has the measure $o(s)$.

Denote by $x^A$ and $x^B$ the vertical projections of $A$ and $B$ on the $x$-plane; that is, $x^A = x^0 - (0,\del)$ and $x^B = x^0 + (0,\del)$. Denote by $x^A_+$ and $x^B_+$ the endpoints of $S_+$ (recall that $S_+$ is the projection of $I_+$); one has $x^A_+ = x^+ - (0, a^+)$ and $x^B_+ = x^+ + (0, a^+)$. Denote by $p_\vphi = p_\vphi(s)$ and $q_\vphi = q_\vphi(s)$, respectively, the projections of the points $P_\vphi$ and $Q_\vphi$ on the $x$-plane. We assume that the arc $(\stackrel{\frown}{P_\vphi, Q_\vphi})$ intersects $I_+'(s)$, and therefore, the interval $(p_\vphi, q_\vphi)$ intersects the projection of $I_+'(s)$ (which is denoted as $\al \bt$ in Fig.~\ref{fig seg}).

Let the point $x^B_\vphi = x^B_\vphi(s)$ be the intersection of lines $x^A_+ p_\vphi$ and $x^B q_\vphi$, and let $x^A_\vphi = x^A_\vphi(s)$ be the point on $x^A q_\vphi$ such that the line $x^A_\vphi x^B_\vphi$ is parallel to $x^A x^B$.

We have $|p_\vphi q_\vphi| = -s |x^A p_\vphi| = O(s)$, the slope of the line $p_\vphi x_\vphi^B$ is $O(\sqrt{|s|})$, hence $|p_\vphi x_\vphi^B| = O(s)$ and $|p_\vphi x_\vphi^A| = O(|s|^{3/2})$. Let $\tilde p_\vphi$ be the image of $x^A_\vphi$ under the homothety with the center $x^A$ and the ratio $1-s$. Taking into account that $q_\vphi$ is the image of $p_\vphi$ under the same homothety, one concludes that $|\tilde p_\vphi q_\vphi| = (1-s) |x^A_\vphi p_\vphi| = O(|s|^{3/2})$. It follows that the length of the part of the arc $(\stackrel{\frown}{P_\vphi, Q_\vphi})$ that projects onto $\tilde p_\vphi q_\vphi$ is also $O(|s|^{3/2})$, and therefore, the area of the part of $\UUU_4^A(s)$ that projects onto the union of the segments $\tilde p_\vphi q_\vphi$ over $\vphi$ is $O(|s|^{3/2}) = o(s)$.

Let us prove that each point of $\UUU_4^A(s)$ that projects to $p_\vphi \tilde p_\vphi$
%(where $(\stackrel{\frown}{P_\vphi, Q_\vphi})$ intersects $I_+'(s)$)
also belongs to $\UUU_4^B(s)$. This will finish the proof of Lemma \ref{l areas}.

     %Observe that the triangle bounded by the lines $\tilde p_\vphi x_\vphi^A$, $x_\vphi^A x_\vphi^B$, and $\tilde p_\vphi x^B$ is similar to the triangle $x^A x^B \tilde p_\vphi$ with the ratio $-s/(1-s)$.

Consider the plane $\Pi$ through the lines $AB$ and $AP_\vphi$. The intersection of $\Pi$ with $C$ is a planar convex set, and the points $P_\vphi$ and $Q_\vphi$ lie on its boundary. The vertical projection of this set on the $x$-plane is bounded by the closed curve in Fig.~\ref{fig seg}.

Take a point $X$ on the arc $(\stackrel{\frown}{P_\vphi, Q_\vphi})$ that lies above the segment $I'_+(s)$ (and therefore, belongs to $\UUU_4^A(s)$) and projects to $p_\vphi \tilde p_\vphi$. Its projection is denoted as $x$, and it lies below the segment $\al\bt$ in Fig.~\ref{fig seg}.
     %Now draw the vertical plane through the points $B$ and $X$.
The straight line through $B$ and $X$ intersects $\pl C$ at two points; one of them is $X$, and the other one is denoted as $X'$ (and its projection is $x'$). The points $X$ and $X'$ bound the arc $(\stackrel{\frown}{X', X})$ in the vertical plane through $B$ and $X$. The point $X$ lies below the plane $\Pi$ and therefore so does $X'$, hence their projections on the $x$-plane belong to the projection of $\Pi \cap C$ (lie inside the closed curve in Fig.~\ref{fig seg}).

The point $x'$ lies below the line $x_\vphi^A x_\vphi^B$ and $x$ lies on the segment $x^A \tilde p_\vphi$, hence
$$
\frac{|x'x|}{|x^B x'|} \le \frac{|\tilde p_\vphi x_\vphi^A|}{|x^A x_\vphi^A|} = -s, \quad \text{and therefore}, \quad \frac{|X'X|}{|B X'|} \le -s.
$$
It follows that the arc $(\stackrel{\frown}{X', X})$ belongs to another arc in the vertical plane, say $(\stackrel{\frown}{P', Q'})$, that satisfies the equation $\frac{|P' Q'|}{|BP'|} = -s$, and therefore, lies in $\Upsilon_4^B$. Thus, $X$ also lies in $\Upsilon_4^B$, and therefore, in $\UUU_4^B$.
\end{proof}

     %The surface $\del_0 C$ is the union of curves $\del_0 C \cap \Pi_\vphi$ over all $\vphi$. For $\vphi \in \Phi_2^A \cup \Phi_4^A$ the curve $\del_0 C \cap \Pi_\vphi$ is the part of $\pl C \cap \Pi_\vphi$ above the point $A'_\vphi$. For $\vphi \in \Phi_1^A$ this curve does not contain a neighborhood of $A'_\vphi$.
Take a point $Q$ on $\del_0 C$ and draw the segment $[A,\, Q]$. Let $[P, Q]$ be the intersection of this segment with $C$. The condition that $Q$ is contained in $\del_0 C \setminus [(1-s)C + sA]$ is
\beq\label{condition}
\frac{|PQ|}{|AP|} < -s.
\eeq
It follows that the curve $(\del_0 C \cap \Pi_\vphi) \setminus [(1-s)C + sA]$ is the intersection of the arc $[\stackrel{\frown}{A'_\vphi, Q_\vphi})$ with $\del_0 C$. By claims (a) and (b) of Lemma \ref{l intersect}, if $\vphi \in \Phi_2^A \cup \Phi_4^A$ then for $|s|$ sufficiently small this curve coincides with $[\stackrel{\frown}{A'_\vphi, Q_\vphi})$, and if $\vphi \in \Phi_1^A$ then for $|s|$ sufficiently small the curve is empty. It follows that the symmetric difference of the sets $\del_0 C \setminus [(1-s)C + sA]$ and $\UUU_2^A(s) \cup \UUU_4^A(s)$ has the measure $o(s)$. Hence
\beq\label{ast1}
F(\del_0 C \setminus [(1-s)C + sA]) = F(\UUU_2^A(s)) + F(\UUU_4^A(s)) + o(s).
\eeq
Similarly,
\beq\label{ast2}
F(\del_0 C \setminus [(1-s)C + sB]) = F(\UUU_2^B(s)) + F(\UUU_4^B(s)) + o(s).
\eeq

The set $\del_0 C \setminus [(1-s)C + s(A \cup B)]$ is the intersection of the sets $\del_0 C \setminus [(1-s)C + s A]$ and $\del_0 C \setminus [(1-s)C + s B]$, therefore
$$
F(\del_0 C \setminus [(1-s)C + s(A \cup B)]) = F(\UUU_2^A(s) \cap \UUU_2^B(s)) + F(\UUU_4^A(s) \cap \UUU_4^B(s)) + o(s).
$$
$\UUU_2^A(s)$ lies in the $s$-neighborhood of $\pl_2^A C$, and $\UUU_2^B(s)$ lies in the $s$-neighborhood of $\pl_2^B C$. By claim (d) of Lemma \ref{labc}, the closures of the sets $\pl_2^A C$ and $\pl_2^B C$ have at most two points in common. It follows that the 2-dimensional measure of $\UUU_2^A(s) \cap \UUU_2^B(s)$ is $o(s)$, hence $F(\UUU_2^A(s) \cap \UUU_2^B(s)) = o(s).$
Further, by Lemma \ref{l areas} we have $F(\UUU_4^A(s) \cap \UUU_4^B(s)) = F(\UUU_4^A(s)) + o(s) = F(\UUU_4^B(s)) + o(s)$, and therefore,
\beq\label{ast3}
F(\del_0 C \setminus [(1-s)C + s(A \cup B)]) = F(\UUU_4^A(s)) + o(s) = F(\UUU_4^B(s)) + o(s).
\eeq

The homothety with ratio $1-s$ and the center at $A$ sends each set $\MMM$ to $(1-s) \MMM + s A$. Its inversion is the homothety with ratio $\frac{1}{1-s}$ and the same center, and sends $\MMM$ to $\frac{1}{1-s}\, \MMM + \frac{-s}{1-s} A$. The image of the set $[ (1-s) \del^A C + sA ] \setminus C$ under the inverse homothety is $\del^A C \setminus \big[ \frac{1}{1-s}\, C + \frac{-s}{1-s} A \big]$, and
$$F \big( [ (1-s) \del^A C + sA ] \setminus C \big) =
(1-s)^2 F \big( \del^A C \cap \setminus \big[ \frac{1}{1-s}\, C + \frac{-s}{1-s} A \big] \big).
$$
It follows that
\beq\label{aprox}
\begin{split}
F \big( \del_0 C \setminus [(1-s)C + sA] \big) + F \big( [ (1-s) \del^A C + sA ] \setminus C \big) \quad & \text{equals} \\
\hspace*{-3mm}F \big( \del_0 C \setminus [(1-s)C + sA] \big) + F \big( \del^A C \setminus \big[ \frac{1}{1-s}\, C + \frac{-s}{1-s} A \big] \big) &
\ \, \text{times a factor}\ 1 + O(s).
\end{split}
\eeq
More precisely, this factor is between 1 and $(1-s)^2$. A similar estimate holds when $A$ is substituted with $B$ in \eqref{aprox}.

Relation \eqref{aprox} is useful, since the expression in its second line is easier to estimate than the expression in its first line.

Take a point $P$ from $\del^A C$ and draw the ray $AP$. The intersection of this ray with $C$ is a (non-degenerate) segment $[P, Q]$. The condition that $P$ is contained in $\del^A C \setminus \big[ \frac{1}{1-s}\, C + \frac{-s}{1-s} A \big]$ coincides with \eqref{condition}.

It follows that the curve $(\del^A C \cap \Pi_\vphi) \setminus \big[ \frac{1}{1-s}\, C + \frac{-s}{1-s} A \big]$ is the intersection of the arc $(\stackrel{\frown}{P_\vphi, A'_\vphi})$ with $\del^A C$. By claims (c) and (d) of Lemma \ref{l intersect}, if $\vphi \in \Phi_2^A$ then for $|s|$ sufficiently small the arc $(\stackrel{\frown}{P_\vphi, A'_\vphi})$ belongs to $\pl_1^A C \subset \del^A C$, and if $\vphi \in \Phi_1^A$ then for $|s|$ sufficiently small the arc is disjoint with $\pl_1^A C$, and therefore, is disjoint with  $\pl_1^A C \cup \pl_3 C = \del^A C$.

Hence the symmetric difference of the sets $\del^A C \setminus \big[ \frac{1}{1-s}\, C + \frac{-s}{1-s} A \big]$ and
$\LLL_2^A(s) \cup \big(\LLL_4^A(s) \cap \del^A C \big)$ has the 2-dimensional measure $o(s)$, and therefore,
\beq\label{ast4}
F \big( \del^A C \setminus \big[ \frac{1}{1-s}\, C + \frac{-s}{1-s} A \big] \big) = F(\LLL_2^A(s)) + F(\LLL_4^A(s) \cap \del^A C) + o(s).
\eeq
Similarly,
\beq\label{ast5}
F \big( \del^B C \setminus \big[ \frac{1}{1-s}\, C + \frac{-s}{1-s} B \big] \big) = F(\LLL_2^B(s)) + F(\LLL_4^B(s) \cap \del^B C) + o(s).
\eeq

By Lemma \ref{l areas}, the set $(\del^A C \cap \LLL_4^A(s)) \cup (\del^B C \cap \LLL_4^B(s))$ equals $(\del^A C \cup \del^B C) \cap \LLL_4^A(s)$ plus a set with 2-dimensional Lebesgue measure $o(s)$, and $\LLL_4^A(s)$ is contained in $\del^A C \cup \del^B C = \pl_1 C \cup \pl_3 C$, hence
\beq\label{ast6}
F(\del^A C \cap \LLL_4^A(s)) + F(\del^B C \cap \LLL_4^B(s)) = F(\LLL_4^A(s)) + o(s).
\eeq

Using \eqref{ast1}--\eqref{ast3} and \eqref{ast4}--\eqref{ast6}, one obtains
$$
F\big(\del_0 C \setminus \big[ (1-s)C + sA \big] \big) + F\big( \del_0 C \setminus \big[ (1-s)C + sB \big] \big)
- F\big(\del_0 C \setminus \big[ (1-s)C + s(A \cup B) \big] \big) $$ $$
+ F \big( \del^A C \setminus \big[ \frac{1}{1-s}\, C + \frac{-s}{1-s} A \big] \big) + F \big( \del^B C \setminus \big[ \frac{1}{1-s}\, C + \frac{-s}{1-s} B \big] \big)
$$ $$
= F(\UUU_2^A(s)) + F(\UUU_4^A(s)) + F(\UUU_2^B(s)) + F(\UUU_4^B(s)) -  F(\UUU_4^B(s))
$$ $$
+ F(\LLL_2^A(s)) + F(\LLL_2^B(s)) + F(\LLL_4^A(s)) + o(s) $$
     \beq\label{predv}
      = F(\Upsilon_2^A(s)) + F(\Upsilon_2^B(s)) + F(\Upsilon_4^A(s)) + o(s).
     \eeq

Recall that the vertical projection of the points $A$, $P_\vphi$, and $Q_\vphi$ on the $x$-plane are, respectively, $x^A = x^0 - (0,\del)$, $p_\vphi = p_\vphi(s)$, and $q_\vphi = q_\vphi(s)$. Denote by $x^\vphi$ the projection of the point $A'_\vphi$, and let $d_\vphi$ be the length of the segment $[x^A,\, x^\vphi]$ and $l_\vphi = l_\vphi(s)$ be the length of $[x^A,\, p_\vphi]$. Note that the points $x^A,\, p_\vphi,\, x^\vphi,\, q_\vphi$ are collinear and follow in the indicated order, besides $|p_\vphi\, q_\vphi| = -s |x^A\, p_\vphi|$ and $|x^A\, x^\vphi| = |x^A\, p_\vphi| (1 + O(s))$, that is,
$$l_\vphi = d_\vphi + O(s).$$

The surface $\pl_2^A \tilde C \setminus \pl_2^A C$ is the union of segments $[A,\, A'_\vphi]$; it is the graph of $\tilde u$ restricted on the union of segments $[x^A,\, x^\vphi]$, $\vphi \in \Phi_2^A$. The gradient of $\tilde u$ at each point of the segment $[x^A,\, x^\vphi]$ equals $\nabla u(x^\vphi)$. Hence the resistance of $\pl_2^A \tilde C \setminus \pl_2^A C$ equals
$$
F(\pl_2^A \tilde C) - F(\pl_2^A C) = \int_{\Phi_2^A} d\vphi \int_0^{d_\vphi} f(\nabla u(x^\vphi))\, r\, dr = \int_{\Phi_2^A} f(\nabla u(x^\vphi))\, \frac{d_\vphi^2}{2}\, d\vphi.
$$

Additionally, the gradient of $u$ at each point of $\Upsilon_2^A(s)$ lying in $(\stackrel{\frown}{P_\vphi, Q_\vphi})$ equals $\nabla u(x^\vphi) + o(1)$. It follows that $F(\Upsilon_2^A(s))$ equals the integral of $f(\nabla u(x^\vphi)) + o(1)$ over the union of segments $\cup_{\Phi_2^A} [p_\vphi,\, q_\vphi]$. This set is the set-theoretic difference of the union of segments $\cup_{\Phi_2^A} [x^A,\, q_\vphi]$ and the union of segments $\cup_{\Phi_2^A} [x^A,\, p_\vphi]$, the former one being the image of the latter one with ratio $1-s$ and the center at $x^A$.

It follows that
$$
F(\Upsilon_2^A(s)) = \int_{\Phi_2^A} d\vphi \int_{l_\vphi}^{(1-s)l_\vphi} [f(\nabla u(x^\vphi)) + o(1)]\, r\, dr $$ $$
= [(1-s)^2 - 1] \Big( \int_{\Phi_2^A} d\vphi \int_0^{d_\vphi + O(s)} f(\nabla u(x^\vphi))\, r\, dr + o(1) \Big)
$$ $$
= -2s \big( F(\pl_2^A \tilde C) - F(\pl_2^A C) \big) + o(s).
$$
Similarly,
$$ F(\Upsilon_2^B(s)) = -2s \big( F(\pl_2^B \tilde C) - F(\pl_2^B C) \big) + o(s),
$$
hence
\beq\label{Upsilon2}
F(\Upsilon_2^A(s)) + F(\Upsilon_2^B(s)) = -2s \big( F(\pl_2 \tilde C) - F(\pl_2 C) \big) + o(s) = -a_2 s + o(s).
\eeq

Let us now consider the value $F(\Upsilon_4^A(s))$. The vertical projection of the union of two planar triangles with the common vertex $A$  and with the bases $I_-$ and $I_+$ on the $x$-plane is again the union of two triangles with the lengths of the bases $2a_-$ and $2a_+$ and with the heights $x_1^0 - x_1^-$ and $x_1^+ - x_1^0$, and their areas are $(x_1^0 - x_1^-) a_-$ and $(x_1^+ - x_1^0) a_+$. The sum of the areas can be represented in another way as
$$
\int_{\Phi_4^A} d\vphi \int_0^{d_\vphi} r\, dr = \int_{\Phi_4^A} \frac{d_\vphi^2}{2}\, d\vphi = (x_1^0 - x_1^-) a_- + (x_1^+ - x_1^0) a_+,$$
and the resistance of the union of the triangles, according to \eqref{a4}, equals
$$
\int_{\Phi_4^A} d\vphi f(\nabla u(x^\vphi))\int_0^{d_\vphi} r\, dr = f(\xi_-,0) (x_1^0 - x_1^-) a_- + f(\xi_+,0) (x_1^+ - x_1^0) a_+ = b_4.
$$
Similarly to the previous case of $\Upsilon_2^A(s)$, the vertical projection of $\Upsilon_4^A(s)$ on the $x$-plane is the union of segments $\cup_{\Phi_4^A} [p_\vphi,\, q_\vphi]$. The gradient of $u(x)$ for $x$ in this projection equals $\nabla u(x^\vphi) + o(1) = (\xi_-, 0) + o(1)$ or $(\xi_+, 0) + o(1)$, when $x$ lies in the neighborhood of $I_-$ or $I_+$, respectively. Therefore  $F(\Upsilon_4^A(s))$ is the integral of $f(\nabla u(x))$ over this union of segments, that is,
$$
F(\Upsilon_4^A(s)) = \int_{\Phi_4^A} d\vphi \int_{l_\vphi}^{(1-s)l_\vphi} f(\nabla u(x^\vphi) + o(1))\, r\, dr $$
$$= [(1-s)^2 - 1] \int_{\Phi_4^A} d\vphi \int_0^{d_\vphi + O(s)} f(\nabla u(x^\vphi) + o(1))\, r\, dr $$
\beq\label{Upsilon4}
= -2s \int_{\Phi_4^A} d\vphi \int_0^{d_\vphi} \big( f(\nabla u(x)) + o(1) \big)\, r\, dr \big( 1 + o(1) \big) = -2s b_4 + o(s).
\eeq

Using formulas \eqref{predv}, \eqref{Upsilon2}, and \eqref{Upsilon4} and taking into account \eqref{aprox}, one comes to the estimate
     $$F\big(\del_0 C \setminus \big[ (1-s)C + sA \big] \big) + F\big( \del_0 C \setminus \big[ (1-s)C + sB \big] \big)
- F\big(\del_0 C \setminus \big[ (1-s)C + s(A \cup B) \big] \big) $$ $$
+ F \big( [ (1-s) \del^A C + sA ] \setminus C \big) + F \big( [ (1-s) \del^B C + sB ] \setminus C \big) = -a_2 s - 2b_4 s + o(s),
$$
and so, claim (a) in Proposition \ref{propo5} is proved.
\vspace{2mm}

Let a real number $c$ belong to the domain of the function $w$ (recall that $w(x_1) = \inf_{x_2} u(x_1,x_2)$). The intersection of the  vertical plane $x_1 = c$ with $\pl C$ is a convex curve. The lower part of this curve is the segment (maybe degenerating to a point) $x_1 = c$, $u(c,x_2) = w(c)$ parallel to the $x_2$-axis. Let it be denoted as $[A(c),\, B(c)]$, so as it is co-directional with $AB$. Note that if $c \in (x_1^-,\, x_1^+)$, then $[A(c),\, B(c)]$ belongs to $\pl_3 C$.

Now take $s<0$ and define the points $A_c(s)$ and $B_c(s)$ as follows. If the length of the segment $[A(c),\, B(c)]$ is greater than or equal to $\frac{-2\del s}{1-s}$ then set $B_c(s) = B(c)$ and take the point $A_c(s)$ on this segment so as $|A_c(s) B_c(s)| = \frac{-2\del s}{1-s}$.

If, otherwise, $|A(c) B(c)| < \frac{-2\del s}{1-s}$ then choose two points $A_c(s)$ and $B_c(s)$ on the curve $\{ x_1 = c \} \cap \pl C$ such that the segment $[A_c(s),\, B_c(s)]$ is parallel to and co-oriented with $[A,\, B]$ and its length is $\frac{-2\del s}{1-s}$. Such a choice is unique.

Consider the open arc $(\stackrel{\frown}{A_c(s), B_c(s)})$, that is, the part of the curve $\{ x_1 = c \} \cap \pl C$ which is bounded above by the points $A_c(s)$ and $B_c(s)$ and does not include these points. Denote by $X(s)$ the union of the arcs over $c \in [x_1^-,\, x_1^+]$, by $X^A(s)$ the union of arcs $(\stackrel{\frown}{A_c(s), B(c)}]$, and by $X^B(s)$ the union of arcs $(\stackrel{\frown}{B(c), B_c(s)})$ (which may be empty). That is,
$$
X(s) = \cup_{c \in [x_1^-,\, x_1^+]} (\stackrel{\frown}{A_c(s), B_c(s)}),
$$ $$
X^A(s) = \cup_{c \in [x_1^-,\, x_1^+]} (\stackrel{\frown}{A_c(s), B(c)}], \qquad X^B(s) = \cup_{c \in [x_1^-,\, x_1^+]} (\stackrel{\frown}{B(c), B_c(s)}).
$$

Below we will need the following lemma.

\begin{lemma}\label{l X}
(a) If $c \in (x_1^-,\, x_1^+)$ then $\del^A C$ contains an arc of the curve $\{ x_1 = c \} \cap \pl C$ bounded by $B(c)$ and containing $A(c)$ in its interior, and $\del^B C$ contains an open arc of this curve bounded by $B(c)$ and disjoint with $[A(c),\, B(c)]$.

(b) If $c \not\in (x_1^-,\, x_1^+)$ then an arc of the curve $\{ x_1 = c \} \cap \pl C$ containing $[A(c),\, B(c)]$ in its interior is disjoint with $\del^A C \cup \del^B C$.
\end{lemma}

\begin{proof}
(a) The plane of support at each point of $[A(c),\, B(c)]$ is parallel to $I$, and its image under the map $\pi : (x^1, x^2, z) \mapsto (x^1, z)$ is a straight line, which does not contain the point $(x^0, z^0)$ and separates this point and epi$(w)$. The plane through $I$ parallel to the original one is supporting to $\tilde C$ and does not intersect $C$. It follows that the segment $[A(c),\, B(c)]$ is contained in $\pl_3 C \subset \del^A C$.

Take an open arc of the curve $\{ x_1 = c \} \cap \pl C$ with an endpoint at $A(c)$ disjoint with $[A(c),\, B(c)]$. Draw the tangent line to a point on this arc in the vertical plane $\{ x_1 = c \}$ and consider the line through $A$ parallel to it. The point $B$ and epi$(w)$ are situated above this line. If the arc is chosen sufficiently small then the plane through $A$ parallel to the plane of support at a point of this arc does not intersect $C$ and is situated below $B$. It follows that the chosen arc belongs to $\pl_1^A C \subset \del^A C$. A similar argument holds for $\del^B C$.

(b) In this case the image of the plane of support at a point of $[A(c),\, B(c)]$ under the map $\pi$ is a straight line, the point $(x^0, z^0)$ and the set epi$(w)$ are situated on one side of this line, and the line does not contain $(x^0, z^0)$. It follows that $[A(c),\, B(c)]$ belongs to $\pl_0 C$, and therefore, is disjoint with $\del^A C \cup \del^B C$, and the same is true for an open arc of the curve $\{ x_1 = c \} \cap \pl C$ containing $[A(c),\, B(c)]$.
\end{proof}

It follows from claim (a) of Lemma \ref{l X} that
 \beq\label{os1}
\text{the 2-dimensional Lebesgue measures of} \ \ X^A(s) \setminus \del^A C \ \ \text{and} \ \ X^B(s) \setminus \del^B C  \ \ \text{are} \ \ o(s).
\eeq
 From claim (b) it follows that
 \beq\label{os2}
\text{the measures of} \ \ \cup_{c \not\in (x_1^-,\, x_1^+)} (\stackrel{\frown}{A_c(s), B(c)}] \cap \del^A C \ \
\text{and} \ \ \cup_{c \not\in (x_1^-,\, x_1^+)} (\stackrel{\frown}{B(c), B_c(s)}) \cap \del^B C \ \ \text{are} \ \ o(s).
\eeq

The orthogonal projection of $X(s)$ on the $x$-plane is contained in the strip $x_1^- \le x_1 \le x_1^+$, and its intersection with each line $x_1 = c$ is a segment with the length $\frac{-2\del s}{1-s}$. That is, its length and width are $x_1^+ - x_1^-$ and $\frac{-2\del s}{1-s}$. The gradient of $u$ at a point $(x_1, x_2)$ in this projection is $(w'(x_1), 0) + o(1)$. Therefore, by \eqref{a3},
\beq\label{omalo}
F(X(s)) = \frac{-2\del s}{1-s} \int_{x_1^-}^{x_1^+} f\big((w'(x_1), 0) + o(1)\big)\, dx = -a_3 s (1 + o(1)).
\eeq

Now consider the sets
$$
[(1-s) \del^A C + sA] \setminus [ (1-s)C + sB ] \quad \text{and}  \quad [(1-s) \del^B C + sB] \setminus [(1-s) C + sA].
$$
The homothety with the center at $A$ and ratio $\frac{1}{1-s}$ of the former set and the homothety with the center at $B$ and the same ratio of the latter   set are, respectively,
\beq\label{2sets}
\del^A C \setminus \big[ C + s' (B - A) \big] \quad \text{and}  \quad \del^B C \setminus \big[ C + s' (A - B) \big], \quad \text{where} \ \, s' = \frac{s}{1-s}.
\eeq
These sets are disjoint, and
$$
F\big([(1-s) \del^A C + sA] \setminus [ (1-s)C + sB ] \big) + F\big( [(1-s) \del^B C + sB] \setminus [(1-s) C + sA] \big)
$$
\beq\label{res2}
= (1-s)^2 F \Big( \big( \del^A C \setminus \big[ C + s' (B - A) \big] \big) \cup \big( \del^B C \setminus \big[ C + s' (A - B) \big] \big) \Big).
\eeq

Take a point $P \in \del^A C$. The intersection of the ray $P + \lam(B - A),\, \lam \ge 0$ with $C$ is a closed segment $PQ$. The point $P$ lies in $\del^A C \setminus \big[ C + s' (B - A) \big]$ if and only if $|PQ| < -s' |I| = \frac{-2\del s}{1-s}$.

Let now $Q \in \del^B C$. The intersection of the ray $Q - \lam(B - A),\, \lam \ge 0$ with $C$ is a closed segment $PQ$. The point $Q$ lies in $\del^B C \setminus \big[ C + s' (A - B) \big]$ if and only if $|PQ| < -s' |I|$.

We conclude that the intersection of the set $\del^A C \setminus \big[ C + s' (B - A) \big]$ with the layer $\{ x_1 \in (x_1^-,\, x_1^+) \}$ coincides with $X^A(s) \cap \del^A C$, and the intersection of $\del^B C \setminus \big[ C + s' (A - B) \big]$ with this layer coincides with $X^B(s) \cap \del^B C$.
On the other hand, the intersections of $\del^A C \setminus \big[ C + s' (B - A) \big]$ and $\del^B C \setminus \big[ C + s' (A - B) \big]$ with the set $\{ x_1 \not\in (x_1^-,\, x_1^+) \}$ coincide, respectively, with $\cup_{c \not\in (x_1^-,\, x_1^+)} (\stackrel{\frown}{A_c(s), B(c)}] \cap \del^A C$ and $\cup_{c \not\in (x_1^-,\, x_1^+)} (\stackrel{\frown}{B(c), B_c(s)}) \cap \del^B C$.

It follows from \eqref{os1} and \eqref{os2} that the symmetric difference of the sets $\big( \del^A C \setminus \big[ C + s' (B - A) \big] \big) \cup \big( \del^B C \setminus \big[ C + s' (A - B) \big] \big)$ and $X(s)$ has  the 2-dimensional measure $o(s)$. Using \eqref{omalo}, we then obtain
$$
F \Big( \big( \del^A C \setminus \big[ C + s' (B - A) \big] \big) \cup \big( \del^B C \setminus \big[ C + s' (A - B) \big] \big) \Big) = -a_3 s (1 + o(1)),
$$
and so, by \eqref{res2},
$$
F\big([(1-s) \del^A C + sA] \setminus [ (1-s)C + sB ] \big) + F\big( [(1-s) \del^B C + sB] \setminus [(1-s) C + sA] \big) = -a_3 s (1 + o(1)).
$$
Claim (b) in Proposition \ref{propo5} is proved.
\vspace{2mm}

It remains to show that the resistances of the sets $\SSS_1 = \big[ (1-s) \del^A C + sA \big] \setminus \big( C \cup \big[ (1-s)C + sB \big] \big)$ and
$\SSS_2 = \big[ (1-s) \del^B C + sB \big] \setminus \big( C \cup \big[ (1-s)C + sA \big] \big)$ are $o(s)$. We will consider only the set $\SSS_1$; the proof for $\SSS_2$ is the same.

Consider a point $P \in \del^A C$ and draw the ray $AP$ with the vertex at $A$. The intersection of this ray with $C$ is a non-degenerate segment $[P,\, P']$. The point $P$ belongs to $\frac{1}{1-s} C + \frac{-s}{1-s} A$ if and only if $|PP'| \ge -s|AP|$. The ratio $\frac{|PP'|}{|AP|}$ is a positive continuous function of $P$; it follows that any compact subset of $\del^A C$ belongs to $\frac{1}{1-s} C + \frac{-s}{1-s} A$ for $|s|$ sufficiently small. In particular, this is true for the compact set $\overline{X^A(h(\kappa))} \cap \{ x_1^- + \kappa \le x_1 \le x_1^+ - \kappa \}$, where the negative function $h(\kappa)$ going to 0 as $\kappa \to 0$ is chosen so as the above set belongs to $\del^A C$.

The homothety with the center $A$ and the ratio $\frac{1}{1-s}$ of $\SSS_1$ is the set-theoretic difference of the sets $\del^A C \setminus \big[ C + \frac{s}{1-s} (B - A) \big]$ and $\frac{1}{1-s} C + \frac{-s}{1-s} A.$ It is proved above that the intersection of the former set with the domain $\{ x_1 < x_1^- \} \cup \{ x_1 > x_1^+ \}$ has the 2-dimensional measure $o(s)$. Further, the intersection of the former set with the layer $x_1^- \le x_1 \le x_1^+$ for $|s| \le |h(\kappa)|$ belongs to $\overline{X^A(h(\kappa))} \cap \{ x_1^- + \kappa \le x_1 \le x_1^+ - \kappa \}$, and therefore, belongs to $\frac{1}{1-s} C + \frac{-s}{1-s} A$. Finally, the intersection of the aforementioned set-theoretic difference with $\{ x_1 \in [x_1^-,\, x_1^- + \kappa] \cup [x_1^+ - \kappa,\, x_1^+] \}$ has the 2-dimensional measure $\le 2\kappa \cdot 2\del |s| (1 + o(1))$. Since $\kappa$ can be chosen arbitrarily small, the measure of the set-theoretic difference is $o(s)$.
Claim (c) in Proposition \ref{propo5} is proved.

\section*{Acknowledgements}

This work is supported by CIDMA through FCT (Funda\c{c}\~ao para a Ci\^encia e a Tecnologia), reference UIDB/04106/2020.
I am very grateful to A. I. Nazarov for fruitful discussions.

\end{document}